\theoremstyle{plain}
\newtheorem{theorem}{Theorem}
\newtheorem{proposition}{Proposition}[section]
\newtheorem{lemma}[proposition]{Lemma}
\theoremstyle{definition}
\newtheorem{remark}{Remark}
\numberwithin{equation}{section}
\newcommand\R{{\mathbb R}}
\newcommand\T{{\mathbb T}}
\newcommand{\TT}{\mathbb{T}}
\renewcommand{\Re}{\textup{Re}\,}
\renewcommand{\Im}{\textup{Im}\,}
\newcommand{\cE}{\mathcal E}
\newcommand{\cK}{\mathcal K}
\newcommand{\cL}{\mathcal L}
\newcommand{\cQ}{\mathcal Q}
\def\eps{{\varepsilon}}
\renewcommand\eps{\epsilon }
\newcommand{\Real}{\mathbb R}
\newcommand{\Complex}{\mathbb C}
\newcommand{\Integer}{\mathbb Z}
\newcommand{\norm}[1]{\left\lVert#1\right\rVert}
\newcommand{\abs}[1]{\left\vert#1\right\vert}
\newcommand{\set}[1]{\left\{#1\right\}}
\newcommand{\grad}{\nabla}
\newcommand{\Naturals}{\mathbb N}
\newcommand{\Integers}{\mathbb Z}
\newcommand{\ddv}{\, dv}
\newcommand{\dss}{\displaystyle}
\newcommand{\brak}[1]{\langle{#1}\rangle}
\newcommand{\n}[1]{{\left\| #1 \right\|}}
\newcommand{\rom}[1]{\textup{\uppercase\expandafter{\romannumeral#1}}}
\def\nn{\nonumber}
\def\aa{\alpha}
\def\bb{\beta}
\begin{document}
 
%\title{Landau damping with shielding on $\Real^3$ in finite regularity} 
%\author{Jacob Bedrossian\footnote{\textit{jacob@cims.nyu.edu}, Courant Institute of Mathematical Sciences. Partially supported by NSF Postdoctoral Fellowship in Mathematical Sciences, DMS-1103765}, \, Nader Masmoudi\footnote{\textit{masmoudi@cims.nyu.edu}, Courant Institute of Mathematical Sciences. Partially supported by NSF  grant DMS-1211806} \, and Cl\'ement Mouhot\footnote{\textit{c.mouhot@dpmms.cam.ac.uk}, Centre for Math. Sc., Univ. of Cambridge. Partially
%      funded by ERC grant MATKIT}}
\title{The linearized Vlasov and Vlasov-Fokker-Planck equations in a uniform magnetic field} 
\author{Jacob Bedrossian\footnote{\textit{jacob@cscamm.umd.edu}, University of Maryland, College Park. The author was partially supported by NSF CAREER grant DMS-1552826. Additionally, the research was supported in part by NSF RNMS \#1107444 (Ki-Net).} and Fei Wang\footnote{\textit{fwang256@umd.edu}, University of Maryland, College Park.}} 
\date{\today}
\maketitle

\begin{abstract}
We study the linearized Vlasov equations and the linearized Vlasov-Fokker-Planck equations in the weakly collisional limit in a uniform magnetic field. 
In both cases, we consider periodic confinement and Maxwellian (or close to Maxwellian) backgrounds.  
In the collisionless case, for modes transverse to the magnetic field, we provide a precise decomposition into a countably infinite family of standing waves for each spatial mode. These are known as Bernstein modes in the physics literature, though the decomposition is not an obvious consequence of any existing arguments that we are aware of. 
We show that other modes undergo Landau damping. 
In the presence of collisions with collision frequency $\nu \ll 1$, we show that these modes undergo uniform-in-$\nu$ Landau damping and enhanced collisional relaxation at the time-scale $O(\nu^{-1/3})$. The modes transverse to the field are uniformly stable and exponentially thermalize on the time-scale $O(\nu^{-1})$. 
Most of the results are proved using Laplace transform analysis of the associated Volterra equations, whereas a simple case of Yan Guo's energy method for hypocoercivity of collision operators is applied for stability in the collisional case. 
\end{abstract}

%\medskip

\setcounter{tocdepth}{2}
{\small\tableofcontents}

\section{Introduction}
In this paper we consider the linearized Vlasov and Vlasov-Fokker-Planck equations with a constant background magnetic field in a periodic box. 
We consider only the single-species case, though the analysis should extend (to some degree) to cover multi-species problems in the collisionless case. 
We are specifically interested  in characterizing the Landau damping, the non-damping modes (known as \emph{Bernstein modes} in the physics literature), and the enhanced collisional relaxation in the limit $\nu \rightarrow 0$.   
The unknown perturbation $h: \Real_+ \times \T^3 \times \Real^3 \rightarrow \Real_+$ is taken to be charge zero $\int_{\T^3 \times \Real^3} h(t,x,v) dx dv = 0$ and satisfies 
\begin{equation}\label{def:VPElin}
\left\{
\begin{array}{l} \dss 
\partial_t h + v\cdot \grad_x h + \frac{q}{m}v \times B_0 \cdot \grad_v h + \frac{q}{m} E(t,x) \cdot \grad_v f^0  = \nu (\Delta_v h + \grad_v \cdot (vh )), \\
E(t,x) = -\grad_x W \ast_{x} \rho(t,x), \\ % \vsp \\ 
\dss \rho(t,x) = \int_{\R^d} h  (t,x,v) dv, \\ 
h(t=0,x,v) = h_{in}(x,v),
\end{array}
\right.
\end{equation}
where 
\begin{align}
\widehat{W}(k) = \frac{q}{4 \pi \abs{k}^2},  \label{def:Coulomb}
\end{align}
 $q>0$ is the charge (chosen positive for the sake of simplicity), $m>0$ is the mass, and $\nu \geq 0$ is the collision frequency. 
We will assume that the magnetic field is given by $B_0 = (0,0,b)$ for a fixed number $b> 0$.
We are only interested  in the case when $\nu = 0$ (collisionless) or $\nu \rightarrow 0$ (weak collision limit).  
For future reference, define $v_{\perp} = (v_1,v_2,0)$ the projection to the directions transverse to the magnetic field (similarly $k_{\perp}$). 
In what follows define the \emph{cyclotron frequency} 
\begin{align}
\omega_c = \frac{qb}{m}, 
\end{align}
which is the rate at which charged particles in a magnetic field gyrate around field lines; see \S\ref{sec:pte}.  
Frequencies of the form $n\omega_c$ with $n \in \Integers_\ast$ are called \emph{cyclotron harmonics}. 

In 1946, Landau \cite{Landau46} observed in the case that $\nu = 0$ and $b=0$, the linearized Vlasov equation predict a rapid decay of the electric field (if $x \in \T^d$) 
despite the lack of any dissipative mechanism. 
In particular, for a large class of smooth equilibria $f^0$ (identified by Penrose \cite{Penrose}) one can prove the following velocity-averaging-type estimates for all $\sigma \geq 0$ and $m > (d - 1)/2$ (for $(x,v) \in \T^d \times \Real^d$) 
\begin{subequations}\label{ineq:velavg}
\begin{align}
\norm{\abs{\grad_x}^{1/2} \brak{\grad_x, \grad_x t}^{\sigma} \rho }_{L^2_t L^2_x} & \lesssim_{\sigma,f^0} \norm{h_{in}}_{H^\sigma_m} \\  
\norm{\abs{\grad_x}^{1/2} e^{\lambda \brak{\grad_x, \grad_x t}} \rho }_{L^2_t L^2_x} & \lesssim_{\lambda,f^0} \norm{e^{\lambda \brak{\grad}} h_{in}}_{L^2_m}. 
\end{align}	
\end{subequations}
The latter estimate requires analyticity of $f^0$ and only holds for $\lambda$ sufficiently small. 
This rapid decay of the density fluctuation is known as \emph{Landau damping}.  
It was observed experimentally in \cite{MalmbergWharton64,MalmbergWharton68} and is now considered one of the most fundamental properties of collisionless plasmas (see e.g. \cite{Ryutov99,GoldstonRutherford95,Stix,Swanson}). 
See \cite{Degond86,Glassey94,Glassey95,MouhotVillani11} for more modern mathematical treatments of linearized Landau damping.
If viewed in $(x,v) \in \T^d_x \times \Real^d_v $, the evolution of the distribution function during Landau damping resembles a passive scalar being stirred by a shear flow. This causes the homogenization in $x$ and drives the associated decay of velocity averages such as the density. This, and similar effects, are often referred to as \emph{phase mixing}.  
For the free transport equation $\partial_t h + v \cdot \grad_x h = 0$, \eqref{ineq:velavg} follows from the Fourier transform and the Sobolev trace lemma (see e.g. Lemma \ref{lem:passive}). To see this for the linearized Vlasov equations with $b = 0$ and $\nu = 0$ one takes advantage of the special structure that allows to reduce the problem to a scalar Volterra equation for $\hat{\rho}(t,k)$. The Volterra equation is then analyzed via the Laplace transform. In the case $b=0$ and $\nu = 0$, a variety of nonlinear results have also been obtained \cite{CagliotiMaffei98,HwangVelazquez09,MouhotVillani11,BMM13,B16} to investigate in which cases the estimates \eqref{ineq:velavg} hold also for nonlinear solutions.   

Nearly all plasmas of physical interest are subject to significant external magnetic fields and it has profound effects on the dynamics \cite{GoldstonRutherford95,Stix,Swanson}. 
Hence, physicists immediately recognized the need to extend the work of Landau to the case $b \neq 0$; this was done by Bernstein in \cite{Bernstein58}. 
Bernstein found that spatial Fourier modes of the density that did not depend on $z$, e.g.  $k = k_{\perp}$, were not subject to Landau damping. Instead, he found an infinite family of standing waves for each mode $k$, one for each harmonic of the cyclotron frequency. These are now known as \emph{Bernstein modes}. 
On the other hand, he also predicted that modes $k$ with $k_3 \neq 0$ are still damped. 

Exactly how weak collisions interact with phase mixing was studied already in the 1950's and had been the subject of some debate in the physics community   (see e.g. \cite{LenardBernstein1958,SuOberman1968,ONeil1968,Johnston1971,NgBhattacharjee1999,NgBhattacharjee2006,ShortSimon2002,Callen2014} for discussions). 
However, a mathematically rigorous study of the case $1 \gg \nu > 0$ and $b = 0$ was completed only recently in \cite{Tristani2016,B17}. 
The work of \cite{B17} confirmed that the phase mixing enhances the effect of collisions, as predicted formally by some physicists \cite{SuOberman1968}, in both the linearized and nonlinear problems.  
Specifically,  the time-scale for collisional relaxtion of $x$-dependent modes in this case is $O(\nu^{-1/3})$. 
The phase mixing creates oscillations in the velocity variable which then in turn enhances the effect of the second order $\Delta_v$ in the collision operator; see Remark \ref{rmk:KolEqn}.

In this work, we provide a detailed study of the linearized dynamics in the case $b \neq 0$ for both collisionless and weak collision limits  $1 \gg \nu \geq 0$. 
The linearized problem is significantly more difficult than the $b = 0$ case studied previously, as the dynamics are significantly more complicated. 
We will first state the results and then provide a discussion. 
In the collisionless case $\nu = 0$, we will study the following class of equilibria: 
\begin{align}
f^0(v) & = \frac{1}{(2\pi)}e^{-\frac{\abs{v_{\perp}}^2}{2}} f_3^0(v) \\ 
f_3^0(v) & = \frac{1}{(2\pi T_{||})^{1/2}} e^{-\frac{v_3^2}{2 T_{||}}}  + \tilde{f}^3(v_3^2),
\end{align}
for  $\abs{T_{||} - 1}$ sufficiently small and $\tilde{f}^3$ in $H^s_m$ for suitable $s,m$ (see below for definitions). 
For simplicity we have normalized the transverse temperature to one. 
This particular class of equilibria is quite natural from a physics perspective, as plasmas are often observed to have different temperatures transverse and parallel to the magnetic field; see e.g. \cite{Stix,Swanson}.  

We prove the following theorem in the case $\nu = 0$, $b \neq 0$. 
Aside from providing the Landau damping estimate \eqref{ineq:LDdmpThm}, Theorem \ref{thm:Cless} provides the decomposition \ref{eq:BernExp}, a countably infinite number of standing waves for each spatial mode. 
This decomposition does not immediately follow from Bernstein's work \cite{Bernstein58}, not even formally\footnote{The quantity in equation (22) of \cite{Bernstein58} is never an entire function. Regardless of the regularity or velocity localization of the initial condition, there are generically poles at every cyclotron harmonic. See \S\ref{sec:pte} for more details.}.  The decomposition is possible due to the singularities in the Laplace transform known as resonances; see Remark \ref{rmk:Resonance} below for more details.
Deriving the formula requires understanding interplay between the Laplace transform description of the linearized Vlasov and the passive transport equation (the linearized Vlasov equation for the case $f^0 \equiv 0$). 

\begin{theorem}[Collisionless dynamics] \label{thm:Cless}
	Let $\brak{v}^m h_{in} \in H^\sigma$ for $\sigma \geq 0$ and $m > 2$. 
	Suppose that $\norm{\tilde{f}^3}_{H^{\sigma'}_m} \leq \delta_0$ with $\sigma' > \sigma + 5/2$ and $\abs{T_{||} - 1} + \delta_0$ sufficiently small depending on universal constants and $b$. 
	Then the following holds:  
	\begin{itemize} 
		\item the Landau damping of $z$-dependent modes:  
		\begin{align}
		\norm{\abs{\partial_z}^{1/2}\brak{\grad,\partial_z t}^\sigma \rho(t)}_{L^2_t L^2_x} \lesssim_{\sigma,\sigma',m} \norm{ h_{in}}_{H^\sigma_m} \label{ineq:LDdmpThm}
		\end{align} 
		\item if $k_3 = 0$ and we additionally have $\sigma > 5/2$, then for all $k_{\perp}$ and $n \in \Naturals$, $\exists !$ $b_{n,k} = b_{n,k}(\omega_c) \in ( n, n+1)$ and coefficients $r_{\pm n,k}$ depending on $h_{in}$ such that (with the convention that $r_{-0,k}$ is distinct from $r_{0,k}$),   
		\begin{align}
		\hat{\rho}(t,k_{\perp},0) = \sum_{n=0}^\infty r_{n,k} e^{i b_{n,k} \omega_c t } + r_{-n,k} e^{-i b_{n,k} \omega_c t }  \label{eq:BernExp}
		\end{align} 
		and further there holds: 
		\begin{align}
		\abs{r_{\pm n,k}} & \lesssim  \frac{1}{\brak{k}^{-\alpha} \brak{n}^{\gamma} }\norm{h_{in}}_{H^\sigma_m}, \label{ineq:ResEst}
		\end{align}
    for all $\alpha$, $\beta$, $\gamma$ such that $\alpha + \beta - \frac{1}{2} \leq \sigma$, $\alpha + 1 < m$, and $\gamma = \min(1,\beta-1)$. 
	\end{itemize} 
\end{theorem}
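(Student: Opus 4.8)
The plan is to Fourier transform \eqref{def:VPElin} in $x$, reduce the collisionless ($\nu=0$) equation to a scalar Volterra equation for $\hat\rho(t,k)$ at each spatial frequency $k$, and analyse it by Laplace transform; the two bullets correspond to $k_3\neq0$ and $k_3=0$. The first step is to solve the free magnetized transport $\partial_t+v\cdot\grad_x+\omega_c(v_2\partial_{v_1}-v_1\partial_{v_2})$ explicitly: along its characteristics the transverse velocity rotates at frequency $\omega_c$, the transverse position is a guiding centre plus a circle of radius $|v_\perp|/\omega_c$, and $x_3$ streams freely. Writing $\mathbb{S}(t)$ for this flow, Duhamel against the forcing $\tfrac{q}{m}E\cdot\grad_v f^0$ together with $\widehat E(t,k)=-ik\widehat W(k)\hat\rho(t,k)$ gives
\[
\hat\rho(t,k)=H(t,k)+\int_0^t K(t-s,k)\,\hat\rho(s,k)\,ds ,
\]
with $H(t,k)=\int\widehat{\mathbb{S}(t)h_{in}}(k,v)\,dv$ the free-transport (``ballistic'') density and $K$ the interaction kernel built from $\widehat W(k)$ and a rotated plane wave tested against $\grad_v f^0$. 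Using $e^{iz\sin\theta}=\sum_n J_n(z)e^{in\theta}$ for the transverse phase and carrying out the Gaussian $v_\perp$-integrals, one finds that $K(\tau,k)$ equals $\widehat W(k)$ times an explicit combination of $\sin\omega_c\tau$, $e^{-\frac{|k_\perp|^2}{\omega_c^2}(1-\cos\omega_c\tau)}$ and the $v_3$-integral $\widehat{f_3^0}(k_3\tau)$, and $H(t,k)$ has the analogous structure with $h_{in}$ in place of $f_3^0$. Whether the $v_3$-factor is a constant ($k_3=0$) or a decaying function of $\tau$ ($k_3\neq0$) is exactly what distinguishes the two cases; this computation is the technical core.

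When $k_3\neq0$, $\widehat{f_3^0}(k_3\tau)$ decays (it is, up to the $\tilde f^3$-perturbation, the classical Landau kernel), so $K(\cdot,k)\in L^1_\tau$ with weighted bounds, and I would verify a quantitative Penrose-type condition $\inf_{\Im\omega\ge0}|1-\tilde K(\omega,k)|\gtrsim1$ uniformly in such $k$ — here the smallness of $|T_{||}-1|+\delta_0$ enters, keeping $f_3^0$ Penrose-admissible. A standard Volterra/Laplace argument (the magnetized analogue of linearized Landau damping) then shows that $\tilde\rho=\tilde H/(1-\tilde K)$ inherits, after inverse Laplace transform and Plancherel in $t$, the decay of $H$; and $H$ is controlled by $\|h_{in}\|_{H^\sigma_m}$ via the Sobolev trace estimate of Lemma \ref{lem:passive}, the surplus $\sigma'>\sigma+5/2$ and $m>2$ absorbing the $v$-moment and derivative losses from the forcing and the Bessel expansion. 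Summing the per-mode estimates over $k$ with $k_3\neq0$ gives \eqref{ineq:LDdmpThm}.

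When $k_3=0$ the $v_3$-factor is the constant $\int f_3^0\,dv_3=1+O(\delta_0)$, so $K(\tau,k_\perp)$ is almost periodic with frequencies in $\omega_c\Integers$; expanding $e^{\frac{|k_\perp|^2}{\omega_c^2}\cos\omega_c\tau}=\sum_n I_n e^{in\omega_c\tau}$ and using the Bessel recurrence, $\tilde K(\omega,k_\perp)$ is an explicit meromorphic function (proportional to $\sum_{n\ge1}n^2 I_n/(\omega^2-n^2\omega_c^2)$) with simple poles at the cyclotron harmonics, and $\tilde H(\omega,k_\perp)$ is likewise meromorphic with simple poles there — the phenomenon flagged in the footnote, that the ballistic term is \emph{never} entire. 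The key point is that in $\tilde\rho=\tilde H/(1-\tilde K)$ these harmonic poles cancel automatically (near $\omega=n\omega_c$ numerator and denominator both blow up like $\mathrm{const}/(\omega-n\omega_c)$), so $\tilde\rho$ is meromorphic with poles only at the zeros of $1-\tilde K$, i.e. the resonances $b_{n,k}\omega_c$. On the real axis $1-\tilde K$ is real and, between consecutive harmonics, strictly monotone (its derivative is a sum of terms of one sign) running from $-\infty$ to $+\infty$, so there is exactly one zero $b_{n,k}\omega_c$ per band; the explicit formula also gives $1-\tilde K\neq0$ off the real axis, ruling out other resonances. Pushing the Bromwich contour to the real axis and collecting residues yields \eqref{eq:BernExp} with $r_{\pm n,k}=\tilde H(\pm b_{n,k}\omega_c,k)/({\mp}\,\partial_\omega\tilde K(\pm b_{n,k}\omega_c,k))$. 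For \eqref{ineq:ResEst}, the coefficient of $e^{in\omega_c\tau}$ in $H$ is $\int J_n(\tfrac{|k_\perp||v_\perp|}{\omega_c})(\cdots)\,h_{in}\,dv$, which is $O(1)$ only where $|v_\perp|\sim n\omega_c/|k_\perp|$ and there $h_{in}$ is small by its $m$ moments; trading $\beta$ $x$-derivatives and $\alpha$ moments against $\|h_{in}\|_{H^\sigma_m}$ produces the factor $\brak{k}^{\alpha}\brak{n}^{-\gamma}$, while a lower bound on $|\partial_\omega\tilde K|$ at the simple zero plus a quantitative separation of $b_{n,k}$ from the integers bounds the denominator; these estimates also give the convergence of \eqref{eq:BernExp}.

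The main obstacle is the $k_3=0$ analysis: showing that the cyclotron-harmonic poles of $\tilde H$ and of $1-\tilde K$ cancel \emph{exactly} (so that only the Bernstein resonances survive in $\hat\rho$), that $1-\tilde K$ has precisely one real zero per inter-harmonic band and no resonances elsewhere, and then extracting \eqref{ineq:ResEst} with the sharp regularity/moment trade-off — all of which hinge on keeping the Bessel structure and the locations $b_{n,k}$ under precise quantitative control. By contrast, once $K$ is computed the $k_3\neq0$ case is a fairly routine magnetized adaptation of the classical linearized Landau-damping argument.
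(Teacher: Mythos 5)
Your overall architecture matches the paper's: characteristics for the magnetized free transport, reduction to a scalar Volterra equation for $\hat\rho(t,k)$, the Bessel expansion $e^{z\cos\theta}=I_0(z)+2\sum I_n(z)\cos n\theta$ turning $\cL[K]$ into a sum of simple poles at $\pm in\omega_c$ for $k_3=0$, a Penrose-type lower bound on $|1-\cL[K]|$ for $k_3\neq0$, the observation that the cyclotron-harmonic poles of $\cL[\hat\rho_0]$ and of $1/(1-\cL[K])$ cancel, and a Bromwich-contour deformation collecting residues at the Bernstein frequencies. These are the same ideas the paper uses, in the same order.

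There is, however, a genuine conceptual gap in how you propose to bound the residues $r_{\pm n,k}$. You write that ``a lower bound on $|\partial_\omega\tilde K|$ at the simple zero plus a quantitative separation of $b_{n,k}$ from the integers bounds the denominator.'' The separation claim is false. For fixed $k_\perp$ (hence fixed $a = 2|k_\perp|^2/\omega_c^2$), the Bernstein frequencies approach the cyclotron harmonics superexponentially fast: the paper's estimate (see \eqref{pole:dis}) gives $0<\ell-b_\ell\lesssim(ea/2\ell)^{\ell-1}e^{-a/2}$, so there is no uniform separation of $b_{n,k}$ from the integers, and no fixed lower bound on $|\partial_\omega\tilde K|$ would follow from it. The actual mechanism is a precise compensation: as $b_\ell\to\ell$, the derivative blows up at exactly the matching rate, $|\partial_\omega\cL[K](ib_\ell\omega_c)|\gtrsim 1/|\ell-b_\ell|$ (the dominant term in the derivative is the same $n=\ell$ term that forces $L=1$). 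In the formula $r_{\ell,k}=\bigl(\sum_n g_{n,k}/(n-b_\ell)\bigr)/(-\partial_\omega L)$, the potentially huge contribution $g_{\ell,k}/(\ell-b_\ell)$ in the numerator is cancelled by the equally huge $\partial_\omega L$, leaving $|r_{\ell,k}|\lesssim|g_{\ell,k}|$ plus lower-order terms. Without isolating this cancellation, the bound \eqref{ineq:ResEst} and the summability needed to justify \eqref{eq:BernExp} do not go through. The paper also needs a case split between $a$ small and $a$ large (Lemmas~\ref{small} and \ref{big}) because the Bessel asymptotics and the size of $|b_\ell-\ell|$ behave very differently in the two regimes; you should be aware that this split is not cosmetic. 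The rest of your sketch (the Penrose argument for $k_3\neq0$ including the role of $|T_{||}-1|+\delta_0$ small, the pole cancellation, the monotonicity argument for exactly one Bernstein mode per band, ruling out resonances off the real $\omega$-axis, and the need to prove summability before passing the contour through infinitely many poles) is faithful to the paper's reasoning.
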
 
\begin{remark}
	It is easy to extend the above theorem to analytic regularity and obtain exponential Landau damping in \eqref{ineq:LDdmpThm} (provided one takes stronger conditions on $\tilde{f}^3$).  
\end{remark}
\begin{remark}
	At least for $\sigma$ sufficiently large (e.g. $\sigma > 7/2$) it is straightforward to use \eqref{ineq:LDdmpThm} prove ``scattering'' to the passive transport equation. That is, there exists a solution $h_\infty$ to \eqref{pte} such that $\lim_{t \rightarrow \infty}\norm{h(t) - h_\infty(t)}_{L^2} = 0$ (the proper analogue of scattering in higher Sobolev norms is also straightforward, but is more complicated to state).  
\end{remark}

\begin{remark} 
At least for $\tilde{f^3} = 0$ and $T_{||} = 1$, from energy methods (see \S\ref{sec:Energy}), one can show that $\norm{\rho(t)}_{L^2} \lesssim \norm{h_{in}}_{L^2_\mu}$ (see \eqref{def:EnerL2} below for definition), however, it is not clear how to justify the Bernstein mode expansion \eqref{eq:BernExp} without some regularity. At least, more subtle harmonic analysis would be required. 
\end{remark} 
\begin{remark} 
There is not an explicit formula for $b_{n,k}$ in terms of $n,k$, and $\omega_c$, except in certain asymptotic limits (see \S\ref{sec:Bern} and \cite{Bernstein58,Stix,Swanson} 
\end{remark}
In the collisional case, the only equilibrium is the Maxwellian (we have taken the temperature one for simplicity without loss of generality) 
\begin{align}
f^0(v) = \mu(v) = \frac{1}{(2\pi)^{3/2}}e^{-\frac{\abs{v}^2}{2}}. 
\end{align}
Define the natural Gaussian weighted space that is the quadratic variation of the Boltzmann entropy and hence the natural energy for the collision operator
\begin{align}
\norm{f}_{L^2_\mu} = \left( \int_{\T^3 \times \Real^3} \frac{1}{\mu(v)} \abs{f(x,v)}^2 dx dv \right)^{1/2}. \label{def:EnerL2}
\end{align}
In the collisional case, we prove Theorem \ref{thm:Coll}. 
The proof of \eqref{ineq:coldec} follows from an energy argument that is a relatively simple variant of Yan Guo's energy methods found in \cite{Guo02,Guo03,Guo06,Guo12}. 
In the linear context, these methods essentially reduce to a type of hypocoercivity argument; see \cite{Villani2009,GallagherGallayNier2009} and the references therein for more standard hypocoercivity methods. 
The proof of \eqref{ineq:LDcol} follows from first reducing the problem again to a Volterra equation (note that it is not clear this is even possible in the case $\nu > 0$, especially when $b \neq 0$) and making a detailed study of this Volterra equation in the $\nu \rightarrow 0$ limit. 
\begin{theorem}[Collisional dynamics] \label{thm:Coll} 
	Then for all $\sigma \geq 0$, $\exists \nu_0 = \nu_0(\sigma)$ sufficiently small such that $\forall \nu \in (0,\nu_0(\sigma))$ (as usual, the implicit constants below are independent of $\nu$): 
	\begin{itemize} 
		\item there exists a universal $\delta > 0$ such that for all $\sigma \geq 1$, such that 
		\begin{align}
		\norm{\brak{\grad_x}^\sigma h(t)}_{L^2_\mu} \lesssim_\sigma  e^{-\delta \nu t}\norm{\brak{\grad_x}^\sigma h_{in}}_{L^2_\mu}. \label{ineq:coldec} 
		\end{align}
		\item there exists a universal $\delta' > 0$  such that for all $m \geq 2$ integers,  
		\begin{align}
		\norm{e^{ \delta' \nu^{1/3} t} \abs{\partial_{z}}^{1/2}\brak{\grad,\partial_z t}^\sigma \rho(t)}_{L^2_t L^2_x} \lesssim_{m,\sigma} \norm{h_{in}}_{H^\sigma_m}. \label{ineq:LDcol} 
		\end{align} 
	\end{itemize} 
\end{theorem}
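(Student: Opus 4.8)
The plan is to establish the two assertions of Theorem \ref{thm:Coll} separately: the decay \eqref{ineq:coldec} by a hypocoercive energy estimate in the spirit of Yan Guo's work \cite{Guo02,Guo03,Guo06,Guo12}, and the enhanced Landau damping \eqref{ineq:LDcol} by reducing to a scalar Volterra equation whose kernel is tracked as $\nu \to 0$. For the decay, in the collisional case $f^0 = \mu$, so the symmetrized unknown $f := \mu^{-1/2} h$ (for which $\|h\|_{L^2_\mu} = \|f\|_{L^2_{x,v}}$) solves an equation whose transport term $v\cdot\grad_x f$ and magnetic term $\omega_c(v_2,-v_1,0)\cdot\grad_v f$ are skew-adjoint on $L^2_{x,v}$ (the latter since $(v_2,-v_1,0)$ is divergence-free and annihilates $\mu^{1/2}$), whose self-consistent term becomes $-\tfrac{q}{m}(E\cdot v)\mu^{1/2}$, and whose collision term becomes $\nu L f$ with $L$ the Fokker--Planck operator, nonpositive, with kernel $\mathrm{span}\{\mu^{1/2}\}$ and spectral gap $\langle f, -Lf\rangle_{L^2_v} \geq \|(\Id - P_v)f\|_{L^2_v}^2$, $P_v$ the $v$-projection onto $\mu^{1/2}$. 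Commuting with $\brak{\grad_x}^\sigma$ (harmless since the coefficients are $x$-independent), the energy identity for $\|\brak{\grad_x}^\sigma f\|_{L^2}^2$ only sees the microscopic dissipation $-\nu\|(\Id - P_v)\brak{\grad_x}^\sigma f\|^2$ and a field production term. Following Guo's method I would add to the energy (i) the electric contribution $\tfrac{q}{m}\|\brak{\grad_x}^\sigma E\|_{\dot H^{-1}}^2$ to cancel the production term, using $\pt\rho + \grad_x\cdot j = 0$ from integrating \eqref{def:VPElin} in $v$, and (ii) a small multiple $\epsilon$ of an interaction functional built from the local conservation laws, so as to recover a full-rank dissipation of the macroscopic part $\brak{\grad_x}^\sigma P_v f = \brak{\grad_x}^\sigma\rho\,\mu^{1/2}$ (here the charge-zero condition and $\sigma \geq 1$ are used to close the estimates). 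For $\epsilon$ small depending only on universal constants, the resulting functional $\mathcal E$ is equivalent to $\|\brak{\grad_x}^\sigma h\|_{L^2_\mu}^2$ and obeys $\tfrac{d}{dt}\mathcal E \leq -\delta\nu\,\mathcal E$, so Gr\"onwall gives \eqref{ineq:coldec}; the rate is the unenhanced $\nu$ since this must also bound the transverse ($k_3 = 0$) Bernstein modes, which do not phase mix.

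For \eqref{ineq:LDcol} the first, delicate, step is a scalar reduction, which is not obviously available once $\nu > 0$ and $b \neq 0$. In Fourier in $x$, the equation \emph{without} the self-consistent term,
\[
\pt\hat h + i(k\cdot v)\hat h + \omega_c(v_2,-v_1,0)\cdot\grad_v\hat h = \nu\big(\Delta_v\hat h + \grad_v\cdot(v\hat h)\big),
\]
is a linear drift--diffusion equation in $v \in \Real^3$ with linear drift and constant diffusion, hence has an explicit Gaussian fundamental solution $G^\nu_k(t,s;v,w)$: the drift flow is the exponential of a fixed matrix (rotation at rate $\omega_c$ in $v_\perp$ coupled to the free-transport shear), and the diffusion adds a Gaussian whose covariance solves a linear Lyapunov ODE. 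Solving \eqref{def:VPElin} by Duhamel against $G^\nu_k$ and integrating in $v$, with $\hat E(s,k) = -ik\widehat W(k)\hat\rho(s,k)$, yields the closed equation
\[
\hat\rho(t,k) = \hat H^\nu(t,k) + \int_0^t \hat K^\nu(t,s,k)\,\hat\rho(s,k)\,ds,
\]
with $\hat H^\nu$ carrying the initial data and $\hat K^\nu$ explicit. I would then establish uniformly in small $\nu$: (a) the collisionless kernel $\hat K^0(\cdot,\cdot,k)$ satisfies the Penrose-type stability condition (no zero of its Laplace-transform symbol in the closed right half-plane) for $k$ with $k_3 \neq 0$, uniformly in $k$, inherited from the $\nu = 0$ analysis behind \eqref{ineq:velavg} and \cite{B17}; and (b) the extra collisional decay
\[
|\hat K^\nu(t,s,k)| \lesssim \brak{k_\perp, k_3(t-s)}^{-N}\, e^{-c\nu k_3^2 (t-s)^3}\qquad(\forall\,N),
\]
in which the $v_3$-diffusion against the $k_3$-shear generates the cubic-in-time factor (cf.\ Remark \ref{rmk:KolEqn} and the predictions of \cite{SuOberman1968}), while the transverse rotation--diffusion contributes only $e^{-c\nu|k_\perp|^2(t-s)}$.

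Granting (a) and (b), I would run the $\nu$-dependent analogue of the quantitative Volterra estimate used for \eqref{ineq:velavg} at $\nu = 0$. The bookkeeping that produces the threshold rate $\nu^{1/3}$ is that on $0 \leq t \lesssim \nu^{-1/3}$ the weight $e^{\delta'\nu^{1/3}t}$ is $O(1)$, whereas for $t \gtrsim \nu^{-1/3}$ the cubic factor $e^{-c\nu k_3^2(t-s)^3}$ in $\hat K^\nu$ dominates $e^{\delta'\nu^{1/3}t}$ once $\delta'$ is chosen small relative to $c$; the same competition forces $\nu_0(\sigma)$ to shrink as $\sigma$ grows, since the higher-order weight $\brak{\grad,\partial_z t}^\sigma$ costs more of the available decay. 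Feeding the resulting $L^2_t$ bounds on $\hat\rho(t,k)$ for $k_3\neq 0$ back into Duhamel and applying the Sobolev trace lemma (Lemma \ref{lem:passive}) as in the derivation of \eqref{ineq:velavg} gives \eqref{ineq:LDcol}; the prefactor $|\partial_z|^{1/2}$ both supplies the half-derivative of the trace lemma and annihilates the transverse modes, which are handled by \eqref{ineq:coldec}.

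The main obstacle is step (b) together with the mere existence of a clean Volterra reduction when $b \neq 0$: one must control the $v$-average of the Gaussian fundamental solution of the \emph{rotating} drift--diffusion operator and its smoothing/oscillation in $(t-s, k)$ uniformly as $\nu \to 0$, separating the non-mixing transverse rotation (which caps the gain at the slow rate $\nu$) from the $z$-shear (which yields the $\nu^{1/3}$ enhancement), and then verifying that this perturbation does not destroy the Penrose condition. Obtaining the bound in (b) with enough decay in $k$ and in $(t-s)$ to close the Volterra estimate — uniformly in $\nu$ — is the technical heart of the argument.
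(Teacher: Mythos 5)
Your outline follows the same two-pronged strategy as the paper---a Guo-style hypocoercive energy for \eqref{ineq:coldec}, and a Fourier reduction to a Volterra equation with a $\nu$-dependent kernel for \eqref{ineq:LDcol}---but there is a genuine gap in the second part that the paper's argument is specifically built around, and which you yourself flag as ``the main obstacle'' without resolving.

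On \eqref{ineq:coldec}: your symmetrized-unknown ($f = \mu^{-1/2}h$) version of the energy method is equivalent to what the paper does in $L^2_\mu$. The interaction functional you allude to from ``local conservation laws'' is, in the paper, $G(t) = \langle\rho, \nabla_x\cdot(\rho u)\rangle$; the precise input is the divergence of the momentum equation tested against $\rho$ (Lemma \ref{lem:pos}), which yields coercivity on $\|\nabla\rho\|_{L^2}^2 + \|\rho\|_{L^2}^2$ up to a $\frac{d}{dt}G$ term and a multiple of $\|\nabla_x g\|_{L^2_\mu}^2$. This matches your sketch. One small imprecision: the electric contribution to the energy in the paper is $\frac{1}{2}\|E\|_{L^2}^2$ (which is $\sim\|\rho\|_{\dot H^{-1}}^2$), not $\|E\|_{\dot H^{-1}}^2$; the latter is one derivative off and would not make the H-theorem cancellation come out cleanly.

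On \eqref{ineq:LDcol}: the critical unestablished step is that the Volterra equation is a \emph{convolution} in $t$, i.e.\ $\hat K^\nu(t,s,k) = K^\nu(t-s,k)$. You write the kernel as $\hat K^\nu(t,s,k)$ and never show it depends only on $t-s$; without this, the Laplace transform machinery---both the representation $\cL[\hat\rho] = \cL[\phi_0]/(1-\cL[K^{\nu,\delta}])$ and the $e^{\delta\nu^{1/3}t}$ shift---does not apply. It is not a priori clear that this should hold: the characteristic flow $e^{-tA}\eta + \int_0^t e^{-\tau A}k\,d\tau$ depends on $t$ and $\tau$ separately, and only after evaluating at the Orr-critical frequency $\eta_{CT}(t,k)$ and computing $e^{sA}\int_s^t e^{-rA}\,dr$ explicitly does one see that the result is a function of $t-s$ alone (paper, Lemma \ref{lem:aijTech}). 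This calculation---which entangles the $\nu$-damping with the $\omega_c$-rotation in $e^{tA}$ and must be carried out by hand---is the technical heart of the Volterra reduction and is absent from your proposal. Relatedly, your estimate (b) is stated with $t-s$ inside before this structure is justified, and you do not indicate how the uniform-in-$\nu$ Penrose condition is transferred from $\nu=0$; the paper does this by showing uniform absolute-integrability bounds on $K^{\nu,\delta}$ and its $t$-derivative (Lemmas \ref{lem:ColCtrl1}, \ref{lem:ColCtrl2}) and then a dominated-convergence argument on a compact window in $(z,k)$ to compare $\cL[K^{\nu,\delta}]$ with $\cL[K^0]$, piggy-backing on Lemma \ref{lem:MdEst}. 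Your heuristic for the $\nu^{1/3}$ rate (competition between $e^{\delta'\nu^{1/3}t}$ and the cubic-in-time exponential) is the right intuition and matches the paper's Lemma \ref{lem:propS}, but it enters the rigorous argument only through the uniform lower bound on $|1-\cL[K^{\nu,\delta}]|$ after the shift, not through a direct Duhamel bootstrap.
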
 
\begin{remark} \label{rmk:KolEqn}
To see the origin of the $\nu^{-1/3}$ time-scale, consider the Kolmogorov equation 
\begin{align}
\partial_t f + v \partial_x f= \nu \partial_{vv }f. 
\end{align}
Write $g(t,x,v) = f(t,x+tv,v)$ and this becomes
\begin{align}
\partial_t g = \nu (\partial_v - t \partial_x)^2 g. 
\end{align}
Applying the Fourier transform in both $x$ and $v$ gives 
\begin{align}
\widehat{g}(t,k,\eta) = \widehat{g}(0,k,\eta) \exp\left[-\nu \int_0^t \abs{\eta - k\tau}^2 d\tau \right]. 
\end{align}
From here we derive that for some universal $\delta > 0$, there holds 
\begin{align}
\abs{\widehat{g}(t,k,\eta)} \leq  \abs{\widehat{g}(0,k,\eta)} \exp \left[ -\nu \max(\eta^2t, \delta k^2 t^3) \right]. 
\end{align}
Notice that the rate is strongly dependent on the order of the smoothing operator in $v$. 
Indeed, the phase mixing is enhancing the collisions by producing large gradients in velocity, which are in turn dissipated faster than low frequencies. 
Hence, for plasma physics in the weak collisional regime, \emph{only second order} collision operators are suitable for making accurate predictions of time-scales. 

This effect was first discovered in the context of fluid mechanics by Kelvin \cite{Kelvin87}, where it is usually referred to as `relaxation enhancement' or `enhanced dissipation' (see e.g. \cite{CKRZ08,BCZ15} and the references therein). 
In fluid mechanics, it has been well-studied in both linear (see e.g. \cite{RhinesYoung83,LatiniBernoff01,BernoffLingevitch94,CKRZ08,BeckWayne11,VukadinovicEtAl2015,BCZ15,LiWeiZhang17} and the references therein) and some nonlinear problems (see e.g. \cite{BVW16,BMV14,BGM15I,BGM15II,BGM15III,WeiZhang18} and the references therein). 
\end{remark} 

\begin{remark} 
Enhanced dissipation and hypoellipticity are closely related in the context of kinetic theory. 
However, they are not equivalent. Indeed, the passive transport equation
\begin{align}
\partial_t h + v\cdot \grad_x h + (v \times B_0) \cdot \grad_v h = \nu \left( \Delta_v + \grad_v \cdot (vh) \right)
\end{align} 
is an example which is hypoelliptic (solutions will be instantly smooth in both $x$ and $v$), however, spatial modes $k$ for which $k_z= 0$ still decay only at the $O(\nu^{-1})$ time-scale. 
The reason is that the cyclotron motion of the collisionless dynamics, 
\begin{align}
\partial_t h + v\cdot \grad_x h + (v \times B_0) \cdot \grad_v h = 0
\end{align}
is periodic with period $2\pi \omega_c^{-1}$, however, higher modes in $k$ produce transient gradients in $v$ which, while bounded for each individual $k$, are unbounded for $k \rightarrow \infty$. Hence, for any fixed $k$, there is no change in the time-scale with respect to $\nu$, however, one nevertheless gets hypoellipticity due to the dependence of this time-scale on $k$. 
See \S\ref{sec:pte} and \S\ref{sec:VoltColl} for more details. 
\end{remark} 

\begin{remark}
The collision operator we consider in \eqref{def:VPElin} and Theorem \ref{thm:Coll} is not the linearization of the natural nonlinear Fokker-Planck operator that satisfies conservation of energy and momentum (see e.g. \cite{B17} and references therein). 
Nevertheless, the work of \cite{B17} treats the remaining non-local terms perturbatively using a combination of hypoellipticity and uniform-in-$\nu$ Landau damping. Due to the presence of non-Landau damping modes here, it is less clear that the linearized collision operator we consider will be sufficient for nonlinear studies, however, it is most likely the case for $k_z \neq 0$. 
\end{remark}

\begin{remark}
	Naturally, one is interested in extending the above results to the Landau collision operator. From \cite{Guo02,Guo12} it is clear that the energy method will apply to this general situation and one can likely deduce \eqref{ineq:coldec}. 
	However, the reduction to a Volterra equation used for the uniform-in-$\nu$ Landau damping and mixing-enhanced collisional relaxation does not seem to apply in this case. 
\end{remark}

\subsection{Notation} \label{sec:Note}
Throughout this paper, we use the Japanese bracket notation for a vector $v\in\mathbb R^3$ or $\mathbb T^3$
\begin{align}
\label{}
\brak{v} = \sqrt{1+|v|^2}.
\end{align}
We also write 
\begin{equation}
\label{}
v = (v_1, v_2, v_3) = (v_x, v_y, v_z) = (v_\perp, v_3)
\end{equation}
where $v_\perp$ is the horizontal component of $v$. We use both $v_z$ and $v_3$ interchangeably depending on which is more convenient.
The $H^m_n$ norm of a function $f$ is defined as follows, with the usual convention $L^2_m := H^0_m$:
\begin{align}
\label{}
\Vert f\Vert_{H^{\sigma}_m} ^2= \int_{\mathbb{T}^3\times\mathbb{R}^3} \brak{v}^{2m}|\brak{\nabla}^\sigma f|^2 \, dx \ddv,
\end{align}
where  $\grad_{x,v}$ is the differential operator in both $x$ and $v$. If $g$ is independent of $v$, $g=(t,x)$, then $\grad g$ reduces to only the derivatives in $x$.  
For $r\in\mathbb R$, we use $r^\pm$ to denote a real number close to $r$, i.e.,
\begin{equation}
\label{}
r^\pm=r\pm\delta
\end{equation}
where $\delta>0$ is any small number. We denote the Fourier transform of a function $f$ in both $x$ and $v$ variable by
\begin{equation}
\label{}
\hat{f}(k, \eta) = \int_{\mathbb T^3\times\mathbb R^3} f(x, v)\text{e}^{-ix\cdot k-iv\cdot\eta} \, dx\ddv
\end{equation}
for which the inverse Fourier transform is given by
\begin{equation}
\label{}
f(x, v) = \frac{1}{(2\pi)^6} \sum_{k\in\mathbb Z^3}\int_{\mathbb R^3} \hat f(k, \eta)\text{e}^{ix\cdot k+iv\cdot\eta} \, d\eta.
\end{equation}
A Fourier multiplier operator $m(\nabla)$ is defined via
  \begin{align}
  \label{}
  (m(\nabla)f)^{\hat{}}(k,\eta) = m(k,\eta) \hat{f}(k,\eta); 
  \end{align}
  and analogously for functions of $x$ only. 
We write the Laplace transform of a function $f(t)$ as
\begin{equation}
\label{}
\cL[f](z) = \int_0^\infty \text{e}^{-tz}f(t)\, dt,
\end{equation}
which is defined and holomorphic for $\Re z \geq \mu$ for $\mu > 0$ such that $e^{-\mu t} f(t) \in L^1_t$.  
If $\gamma \geq \mu$ and $\cL[f](\gamma \pm i \omega)$ is $L^1_\omega$, then the inverse transform is defined via the integration along the Bromwich contour
\begin{equation}
f(t) = \frac{1}{2\pi i} \int^{\gamma+i\infty}_{\gamma-i\infty} \text{e}^{tz}\cL[f](z)\, dz. \label{eq:invTrans}
\end{equation}
As in the case of the Fourier transform, this formula extends also to the case $\cL[f](\gamma \pm i \omega) \in L^{2}_\omega$ (and the analogue of Plancherel's identity still holds). 
We also recall that for $\Re z \geq \mu$ for $\mu$ sufficiently large, the Laplace transform of the Volterra equation 
\begin{align}
\rho(t) = f(t) + \int_0^t K(t-\tau) \rho(s) ds 
\end{align}  
is given by 
\begin{align}
\cL[\rho](z) = \cL[f](z) + \cL[K](z) \cL[\rho](z). 
\end{align}
Analytic continuation and contour deformation play important roles for making the Laplace transform useful for analyzing Volterra equations.

\section{Collisionless case}
The key structure in these kinetic problems is that we can reduce the full linearized dynamics to a Volterra equation. 
\subsection{Trajectories and Volterra equation reduction}
\subsubsection{Passive transport behavior}\label{sec:pte}
The passive transport equation reads
\begin{align}
\label{pte}
\partial_t h + v\cdot \grad_x h +  \frac{q}{m}v\times B_0 \cdot \grad_v h  = 0. 
\end{align}
We define the density associated with this equation as 
\begin{align}
\rho_0= \int_{\Real^3} h(t, x, v)\ddv.
\end{align}
The particle trajectories of \eqref{pte} solve the ODE (we have assumed for simplicity $q > 0$, the only difference in what follows is the direction of rotation) 
\begin{align}
\dot{X} & = V\\ 
\dot{V} & = \omega_c 
\begin{pmatrix} 
V_y \\ 
-V_x \\ 
0
\end{pmatrix}
. 
\end{align}
It is classical that the trajectories of charged particles in a uniform magnetic field trace out helices \cite{GoldstonRutherford95}.  
Denote the backwards characteristics ending at a point $(x,v)$ at time $t$ by $X(\tau;t,x,v), V(\tau;t,x,v)$, then 
\begin{align}
V_x(\tau) & = v_x \cos \omega_c(t-\tau) -  v_y \sin \omega_c(t-\tau) \\
V_y(\tau) & = v_x \sin \omega_c(t-\tau) + v_y \cos \omega_c(t-\tau) \\
V_z(\tau) & = v_z.
\end{align}
The position characteristics are given by
\begin{align}
X_x(\tau) & = x_x - \frac{v_x}{\omega_c} \sin \omega_c(t-\tau) + \frac{v_y}{\omega_c}(1- \cos \omega_c(t- \tau)) \\
X_y(\tau) & = x_y - v_x \frac{1}{\omega_c}(1- \cos \omega_c(t-\tau)) - \frac{v_y}{\omega_c} \sin \omega_c(t-\tau) \\
X_z(\tau) & = x_z - (t-\tau)v_z. 
\end{align}
It follows that the solution to \eqref{pte} is given by 
\begin{align}
h(t,x,v) & = h_{in} (X(0;t,x,v),V(0;t,x,v)). 
\end{align}
Taking the Fourier transform (in both variables) gives, 
\begin{align}
\widehat{h}(t,k,\eta) & = \int_{\TT^3 \times \Real^3} \hspace{-.5cm} e^{-i\eta \cdot v - ik\cdot x} h_{in}\Bigg(
\begin{pmatrix}
x_x - \frac{v_x}{\omega_c} \sin \omega_c t + \frac{v_y}{\omega_c}(1- \cos \omega_ct) \\
x_y - v_x \frac{1}{\omega_c}(1- \cos \omega_c t) - \frac{v_y}{\omega_c} \sin \omega_c t \\
x_z - tv_z
\end{pmatrix}
, %\\ & \quad\quad 
\begin{pmatrix}
v_x \cos \omega_ct -  v_y \sin \omega_c t \\
v_x  \sin \omega_c t  + v_y \cos \omega_c t \\
v_z
\end{pmatrix} 
\Bigg)
dx dv \\
& = \int_{\TT^3 \times \Real^3} \hspace{-0.5cm} e^{-i \eta \cdot v - ik \cdot (x + \tilde{O}(t)v) } h_{in}\left(x, 
\begin{pmatrix}
v_x \cos \omega_ct - v_y \sin \omega_c t \\
v_x  \sin \omega_c t  + v_y \cos \omega_c t \\
v_z
\end{pmatrix} 
\right)dx dv, 
\end{align}
where
\begin{equation}
\label{}
\tilde{O}(t)
=
\begin{pmatrix}
\frac{1}{\omega_c}\sin(\omega_ct) & -\frac{1}{\omega_c}(1-\cos(\omega_ct) ) & 0\\
\frac{1}{\omega_c}(1-\cos(\omega_ct) ) & \frac{1}{\omega_c}\sin(\omega_ct) & 0\\
0 & 0 & t
\end{pmatrix}.
\end{equation}
Denote also the orthogonal matrix 
\begin{equation}
\label{}
O(t)
=
\begin{pmatrix}
\cos(\omega_ct) & -\sin(\omega_ct) & 0\\
\sin(\omega_ct) & \cos(\omega_ct) & 0\\
0 & 0 & 1
\end{pmatrix}. 
\end{equation}
%from which we get
%\begin{equation}
%\label{}
%O^{-1}(t)
%=
%\begin{pmatrix}
%\cos(\omega_ct) & \sin(\omega_ct) ) & 0\\
%-\sin(\omega_ct) & \cos(\omega_ct) & 0\\
%0 & 0 & 1
%\end{pmatrix}.
%\end{equation}
Hence,
\begin{align}
\widehat{h}(t,k,\eta) %& = \int e^{-i O(t)\eta \cdot v - ik \cdot (x + \tilde{O}(t)O^{-1}(t)v)} h_i\left(x,v\right) dv dx \\ 
& = \widehat{h_{in}} \left(k, O(t)\eta + O(t)\tilde{O}^{T}(t)k \right).  
\end{align}
It follows that the associated density  is given by: 
\begin{align}
\label{rho0}
\hat{\rho}_0(t,k) & = \widehat{h_{in}} \left(k, O(t)\tilde{O}^{T}(t)k \right).  
\end{align}
Next, we characterize the dynamics of this ``passive transport'' density. 
\begin{lemma} \label{lem:passive}
	Let $h$ solve the passive transport equation \eqref{pte} with initial data $h_{in}(x, v)$. Then the following holds: 

\begin{enumerate} 
  \item the Landau damping of $k_z \neq 0$ modes: for all $\sigma \geq 0$, $m > 2$, 
	\begin{align}
	\norm{\abs{\partial_z}^{1/2} \brak{\grad,\partial_z t}^\sigma \rho_0}_{L^2_t L^2_x} \lesssim_{\sigma,m} \norm{h_{in}}_{H^{\sigma}_m};  \label{ineq:LDrho0}
	\end{align}	
\item the decomposition into countably infinite oscillations at the cyclotron harmonics for $k_z = 0$: 
	for all $k$ with $k_z = 0$, there exist coefficients $g_{n,k}$ depending on $h_{in}$ such that  
	\begin{align}
	\label{rep:rho0}
	\hat{\rho_0}(t,k_{\perp},0) = \sum_{n=-\infty}^\infty g_{n,k} e^{in \omega_c t},
	\end{align}
	which satisfy the following for all $\alpha, \beta$ such that $\aa+\bb-1/2\le \sigma$ and $\aa +1 <  m$, 
	\begin{align}
	\label{ineq:gnkest}
	\sum_{n \in \Integer} \sum_{k_\perp \in \Integer^2} |n|^{2\aa}|k|^{2\bb}\abs{g_{n,k}}^{2} 
	\lesssim \Vert h_{in}\Vert_{H^{\aa+\bb-1/2}_m}^2 
	\end{align}
    and
	%\begin{align}
	%\norm{\abs{\grad}^j \rho_0(t)}_{L^2}^2 \lesssim \sum_k \sum_n n^{1+} \abs{k^j g_{n,k}}^2.
	%\end{align}
	%\fei{should it be
	\begin{align}
	\label{Pla}
	\norm{\abs{\grad}^j \rho_0(t)}_{L^2([0, 2\pi/\omega_c]\times\mathbb T^3)}^2 \lesssim \sum_{n \in \Integer} \sum_{k_\perp \in \Integer^2} \abs{k^j g_{n,k}}^2.
	\end{align}
\end{enumerate} 
\end{lemma}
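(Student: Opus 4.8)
The plan is to exploit the explicit flow formula \eqref{rho0} and read off the structure of the map appearing there. Write $\Psi(t,k) := O(t)\tilde O^{T}(t)k$, so \eqref{rho0} is $\hat\rho_0(t,k)=\widehat{h_{in}}(k,\Psi(t,k))$. Computing the $2\times 2$ upper block of $O(t)\tilde O^{T}(t)$ gives $\Psi(t,k)=(\psi_\perp(t,k_\perp),\,tk_3)$ with
\begin{equation*}
\psi_\perp(t,k_\perp)=\tfrac{1}{\omega_c}\bigl[\sin(\omega_c t)\,k_\perp+(1-\cos(\omega_c t))\,k_\perp^{\perp}\bigr],\qquad k_\perp^{\perp}:=(-k_2,k_1).
\end{equation*}
Two structural facts will do all the work. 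First, $t\mapsto\psi_\perp(t,k_\perp)$ parametrizes once, at speed $\abs{k_\perp}$, the circle of radius $\abs{k_\perp}/\omega_c$ centred at $k_\perp^{\perp}/\omega_c$; hence it is $2\pi/\omega_c$-periodic and $\abs{\psi_\perp(t,k_\perp)}\le 2\abs{k_\perp}/\omega_c$, so that $\brak{k,k_3t}\le\brak{k,\Psi(t,k)}\le C_{\omega_c}\brak{k,k_3t}$. Second, the $z$-component $\Psi_z=tk_3$ is linear — strictly monotone when $k_3\neq 0$, identically $0$ when $k_3=0$. This dichotomy is precisely the split between the two claims.

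\emph{Claim (1).} The left side of \eqref{ineq:LDrho0} vanishes on the $k_3=0$ modes, so fix $k$ with $k_3\neq 0$ and set $G_k(\eta):=\brak{k,\eta}^{\sigma}\widehat{h_{in}}(k,\eta)$. Using $\brak{k,k_3t}\le\brak{k,\Psi(t,k)}$ and the substitution $w=tk_3$ (under which $\abs{k_3}\,dt=dw$ and $t\mapsto\Psi(t,k)$ becomes the graph $w\mapsto(\psi_\perp(w/k_3,k_\perp),w)$ over the $w$-axis, with transverse part $\le 2\abs{k_\perp}/\omega_c$ uniformly in $w$),
\begin{equation*}
\int_0^\infty \abs{k_3}\brak{k,k_3t}^{2\sigma}\bigl|\widehat{h_{in}}(k,\Psi(t,k))\bigr|^2\,dt\;\le\;\int_{\R}\ \sup_{\eta_\perp\in\R^2}\abs{G_k(\eta_\perp,w)}^2\,dw\;\lesssim\;\int_{\R}\norm{G_k(\cdot,w)}_{H^{1+}(\R^2)}^2\,dw\;\le\;\norm{G_k}_{H^{1+}_{\eta}(\R^3)}^2,
\end{equation*}
the middle inequality being $H^{1+}(\R^2)\hookrightarrow L^\infty$ applied for a.e.\ $w$. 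Summing over $k$ and using Plancherel in $v$ — the $H^{1+}_\eta$-regularity of $\widehat{h_{in}}(k,\cdot)$ is a $\brak{v}^{1+}$-weight on $h_{in}$, and the multiplier $\brak{k,\eta}^\sigma$ commutes with that weight up to lower order — bounds the total by $\lesssim_{\sigma}\norm{h_{in}}_{H^{\sigma}_{1+}}^2\le\norm{h_{in}}_{H^{\sigma}_m}^2$ since $m>2$, which is \eqref{ineq:LDrho0}.

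\emph{Claim (2).} Fix $k$ with $k_z=0$; by the first structural fact $\hat\rho_0(t,k_\perp,0)=\widehat{h_{in}}\bigl((k_\perp,0),(\psi_\perp(t,k_\perp),0)\bigr)$ is $2\pi/\omega_c$-periodic in $t$, hence equals its Fourier series $\sum_n g_{n,k}e^{in\omega_c t}$, which is \eqref{rep:rho0}. For \eqref{ineq:gnkest}: by Parseval on the circle of length $2\pi/\omega_c$, $\sum_n\abs{n}^{2\alpha}\abs{g_{n,k}}^2\approx_{\omega_c,\alpha}\int_0^{2\pi/\omega_c}\abs{\partial_t^{\alpha}\hat\rho_0(t,k_\perp,0)}^2\,dt$ for integer $\alpha$ (general $\alpha$ by interpolation). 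By Faà di Bruno, $\partial_t^{\alpha}\hat\rho_0(t,k_\perp,0)$ is a finite sum over $0\le p\le\alpha$ of $(\nabla_{\eta_\perp}^{p}\widehat{h_{in}})\bigl((k_\perp,0),(\psi_\perp(t,k_\perp),0)\bigr)$ times Bell polynomials in $\partial_t^{j}\psi_\perp(\cdot,k_\perp)$, $1\le j\le\alpha$; since $\abs{\partial_t^{j}\psi_\perp}\le C_{j,\omega_c}\abs{k_\perp}$, each term is $\le C_{\alpha,\omega_c}\abs{k_\perp}^{p}\bigl|(\nabla_{\eta_\perp}^{p}\widehat{h_{in}})((k_\perp,0),(\psi_\perp(t,k_\perp),0))\bigr|$. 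The crucial input is then a trace estimate onto the gyro-circle: because $\psi_\perp(\cdot,k_\perp)$ traces a circle of radius $R:=\abs{k_\perp}/\omega_c$, rescaling to the unit circle and applying the trace $H^{1+}(\R^3)\hookrightarrow L^2(\mathrm{circle})$ (loss of one derivative, the circle being a codimension-two curve) gives, for each $p$,
\begin{equation*}
\int_0^{2\pi/\omega_c}\bigl|(\nabla_{\eta_\perp}^{p}\widehat{h_{in}})((k_\perp,0),(\psi_\perp(t,k_\perp),0))\bigr|^2\,dt\;\lesssim_{\omega_c}\;\brak{k_\perp}^{-1+}\,\bigl\|\widehat{h_{in}}((k_\perp,0),\cdot)\bigr\|_{H^{p+1+}_{\eta}(\R^3)}^2,
\end{equation*}
the factor $\brak{k_\perp}^{-1+}$ coming from the $R\sim\abs{k_\perp}/\omega_c$ dilation — this is exactly the source of the half-derivative gain in \eqref{ineq:gnkest}. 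Collecting terms (using $\abs{k_\perp}^{2p}\le\abs{k_\perp}^{2\alpha}$ and $\norm{\cdot}_{H^{p+1+}_\eta}\le\norm{\cdot}_{H^{\alpha+1+}_\eta}$ for $p\le\alpha$), multiplying by $\abs{k}^{2\beta}=\abs{k_\perp}^{2\beta}$, summing over $n$ and $k_\perp$ with $\brak{k_\perp}^{2\alpha-1}\abs{k_\perp}^{2\beta}=\brak{k}^{2(\alpha+\beta-1/2)}$, and turning $H^{\alpha+1+}_\eta$-regularity into the $\brak{v}^{\alpha+1+}$-weight by Plancherel in $v$, one arrives at $\sum_{n}\sum_{k_\perp}\abs{n}^{2\alpha}\abs{k}^{2\beta}\abs{g_{n,k}}^2\lesssim\norm{h_{in}}_{H^{(\alpha+\beta-1/2)+}_{(\alpha+1)+}}^2\le\norm{h_{in}}_{H^{\alpha+\beta-1/2}_m}^2$, the last step using $\alpha+\beta-1/2\le\sigma$, $\alpha+1<m$ and choosing the small positive losses appropriately. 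Finally \eqref{Pla} is immediate: Plancherel in $x$ together with Parseval of \eqref{rep:rho0} in $t$ over one period gives $\norm{\abs{\grad}^j\rho_0(t)}_{L^2([0,2\pi/\omega_c]\times\T^3)}^2\approx_{\omega_c}\sum_{k_\perp}\sum_n\abs{k^j g_{n,k}}^2$.

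The one genuine difficulty is the half-derivative gain in \eqref{ineq:gnkest}. Bounding $\abs{\hat\rho_0}$ pointwise loses it entirely — equivalently, $\sum_n\abs{c_n(v_\perp,k_\perp)}^2\equiv 1$ for the Jacobi–Anger/Bessel coefficients $c_n$ of $e^{-iv_\perp\cdot\psi_\perp(\cdot,k_\perp)}$ — so the gain has to be extracted from the curvature of the cyclotron orbit, i.e.\ from the $R\sim\abs{k_\perp}/\omega_c$ scaling of the trace onto the gyro-circle, and one must keep the bookkeeping (the $\abs{k_\perp}$-powers produced by the $\alpha$ time-derivatives, the $\eta$-derivatives they cost, and the trace loss) balanced so that the final estimate lands on $H^{\alpha+\beta-1/2}_m$. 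Claim (1) and \eqref{Pla} are, by contrast, routine once the monotone/periodic dichotomy of $\Psi(t,k)$ is in hand.
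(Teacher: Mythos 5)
Your proof is correct and follows essentially the same route as the paper: for $k_3\neq 0$ you substitute $w=tk_3$ and invoke a Sobolev embedding/trace in the transverse Fourier variable, and for $k_3=0$ you observe that $\hat\rho_0(t,k_\perp,0)$ is $2\pi/\omega_c$-periodic, expand in a Fourier series, and bound the coefficients by a trace onto the gyro-circle of radius $|k_\perp|/\omega_c$, with the $\langle k_\perp\rangle^{-1}$ gain coming from the geometry of that circle. The only small difference is in how you extract that gain: you rescale the circle to unit size, which inherits a $\langle k_\perp\rangle^{-1+\epsilon}$ (the inhomogeneous $H^{1+\epsilon}$ norm does not scale cleanly), so strictly speaking your bookkeeping reaches \eqref{ineq:gnkest} only for $\alpha+\beta-1/2<\sigma$ rather than with the non-strict inequality; the paper instead reparametrizes by arc length so the Jacobian produces an exact factor $|k|^{-1}$ (after squaring) and all the $\epsilon$-loss is absorbed into the hypothesis $m>\alpha+1$, which is already strict — using arc length rather than rescaling would let your argument hit the endpoint cleanly.
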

\begin{remark}
	 Note that \eqref{rep:rho0} implies $\rho_0$ is periodic with period $2\pi \omega_c^{-1}$. 
\end{remark}
\begin{proof}
	The velocity averaging estimate \eqref{ineq:LDrho0} follows via the Sobolev trace lemma on the Fourier side (in particular, that the $L^2$ norm of the trace along a line is bounded by the $H^{(d-1)/2}$ norm) 
	\begin{align}
	\int_0^\infty \sum_{k \in \Integer^3_\ast} \abs{k_z} \abs{\widehat{h_{in}} \left(k, O(t)\tilde{O}^{T}(t)k \right)}^2 dt & \lesssim \sum_{k \in \Integer^3_\ast} \sup_{\eta \in \Real^2}   \int_0^\infty \abs{k_z} \abs{\widehat{h_{in}}(k,\eta_2,k_z t)}^2 dt \\
	& \lesssim \sum_{k \in \Integer^3} \sup_{\eta_2 \in \Real^2} \int_{-\infty}^\infty \abs{\widehat{h_{in}}(k,\eta_2, \eta_z)}^2 d\eta_z \\
	& \lesssim \sum_{k \in \Integer^3} \int_{\Real^3} \sum_{\abs{\alpha} \leq m} \abs{D_\eta^\alpha \widehat{h_{in}}(k,\eta)}^2 d\eta. 	
	\end{align}
	The inclusion of the Fourier multiplier $\brak{\grad,\partial_z t}$ is immediate, which concludes the proof of \eqref{ineq:LDrho0}. 
	
	Turn next to the expansion of $\rho_0$ in the case that $k_z = 0$. 
	Recall   
	\begin{align} 
	O\tilde{O}^T(t)k & = 
	\begin{pmatrix}
	\frac{1}{\omega_c}\sin(\omega_ct) & -\frac{1}{\omega_c}(1-\cos(\omega_ct) ) & 0\\
	\frac{1}{\omega_c}(1-\cos(\omega_ct) ) & \frac{1}{\omega_c}\sin(\omega_ct) & 0\\
	0 & 0 & t
	\end{pmatrix} k \\ 
	& = \begin{pmatrix} 
	\frac{k_x}{\omega_c}\sin(\omega_ct) - \frac{k_y}{\omega_c}(1-\cos(\omega_ct) ) \\ 
	\frac{k_x}{\omega_c}(1-\cos(\omega_ct) ) + \frac{k_y}{\omega_c}\sin(\omega_ct) \\ 
	0
	\end{pmatrix} \\ 
	& = \begin{pmatrix} 
	-\frac{k_y}{\omega_c} \\ 
	\frac{k_x}{\omega_c} \\ 
	0
	\end{pmatrix} 
	+ 
	\begin{pmatrix} 
	\frac{k_x}{\omega_c}\sin(\omega_ct) + \frac{k_y}{\omega_c}\cos(\omega_ct) \\ 
	-\frac{k_x}{\omega_c} \cos(\omega_ct)  + \frac{k_y}{\omega_c}\sin(\omega_ct) \\ 
	0
	\end{pmatrix} 
.
	\end{align} 
	Hence, we see that $O\tilde{O}^T(t)k$ traces out a circle of radius $\abs{k_{\perp}}\omega_c^{-1}$  centered at $\omega_c^{-1}(-k_y,k_x)$ over a period of $2\pi/\omega_c$. 
	In particular, we can find $\phi = \phi(k)$ such that 
	\begin{align}
	O\tilde{O}^T(t)k = \begin{pmatrix} 
	-\frac{k_y}{\omega_c} \\ 
	\frac{k_x}{\omega_c} \\ 
	0
	\end{pmatrix} 
	+ 
	\frac{\abs{k_{\perp}}}{\omega_c} 
	\begin{pmatrix} 
	\cos(\omega_c t + \phi) \\ 
	\sin(\omega_c t + \phi) \\ 
	0
	\end{pmatrix} 
	.
	\end{align}
	Write in polar coordinates: 
	\begin{align}
	g_k(\theta) = \widehat{h_{in}}\left(k, \frac{\abs{k_{\perp}}}{\omega_c}\begin{pmatrix}  \cos(\theta + \phi)  \\ \sin(\theta + \phi) \\ 0 \end{pmatrix} + \begin{pmatrix} 
	-\frac{k_y}{\omega_c} \\ 
	\frac{k_x}{\omega_c} \\ 
	0
	\end{pmatrix}  \right)
	\end{align}
	and hence by definition and \eqref{rho0}, 
	\begin{align}
	\hat{\rho}_0(t,k) & = g_k(\omega_c t). 
	\end{align}
	Then, \eqref{rep:rho0} is obtained by expanding $g_k(\theta)$ in an angular Fourier series: 
	\begin{align}
	g_{k}(\theta) = \sum_{n \in \Integers} g_{k,n} e^{in\theta}. 
	\end{align}
	By Plancherel, we have for $\aa\in \mathbb R$
	\begin{align}
	\norm{\abs{\partial_\theta}^\aa g_k}_{L^2_\theta}^2 = \sum_{n \in \Integers} n^{2\aa}\abs{g_{n,k}}^{2}. 
	\end{align}
	Hence, \eqref{Pla} for $j=0$ follows. 
	We denote \[O\tilde{O}^Tk([a, b])=\{x\in\mathbb R^3: x=O\tilde{O}^T(t)k\ \text{for\ some}\ t\in[a, b]\}.\]
	Due to the Sobolev-trace lemma, we have for all $\alpha \geq 0$ and $m > \alpha + 1$
	%	\begin{align}
	%	\norm{\abs{\partial_\theta}^j g_k}_{L^2_\theta} & \lesssim \abs{k}^j \sum_{\abs{\alpha} \leq j+ 1} \norm{D^\alpha_\eta \widehat{h_{in}}(k,\cdot)}_{L^2_\eta}. 
	%	\end{align}
	%	\fei{
	\begin{align}
	\norm{\abs{\partial_\theta}^\aa g_k}_{L^2_\theta[0, 2\pi/\omega_c]} & 
	\lesssim |k|^{\alpha-1/2} \norm{\brak{D_\eta}^{\aa} \widehat{h_{in}}(k,\cdot)}_{L^2_\eta( O\tilde{O}^Tk([0, 2\pi/\omega_c]))}
	\nonumber\\&
	\lesssim \abs{k}^{\aa-1/2} \norm{\brak{D_\eta}^{m} \widehat{h_{in}}(k,\cdot)}_{L^2_\eta(\mathbb R^3)}.
	\end{align}
	%	}
	Note the powers of $k$ come from the Jacobian and the chain rule. 
	Hence by Plancherel again, we get
	\begin{align}
	\sum_{n \in \Integer} \sum_{k_{\perp} \in \Integer^2} n^{2\aa}|k|^{2\bb}\abs{g_{n,k}}^{2} 
	%&= \norm{\abs{\partial_\theta}^\aa %\abs{k}^\bb g_k}_{L^2}^2 
	& \lesssim 	\sum_{n \in \Integer} \sum_{k_{\perp} \in \Integer^2} \abs{k}^{2(\aa+\bb)-1} \norm{\brak{D_\eta}^{\alpha+1^+} \widehat{h_{in}}(k,\cdot)}_{L^2_\eta}^2
	\nonumber\\&
	\lesssim \Vert h_{in}\Vert_{H^{\sigma}_{m}}^2
	\end{align}
	concluding the proof.
	%	}
\end{proof}

\subsubsection{Non-local response} 
Next, we will study the full linearized Vlasov equations, 
\begin{align}
\partial_t h + v\cdot \grad_x h +  \frac{q}{m} v\times B_0 \grad_v h  = -\frac{q}{m} E(t,x) \cdot \grad_v f^0. 
\end{align}
Using the trajectories introduced above for the passive transport equation \eqref{pte}, the method of characteristics gives
\begin{equation}
\label{}
\frac{d}{d\tau} h(t, X(\tau; t, x, v), V(\tau; t, x, v)) = -\frac{q}{m} E(t, X(\tau; t, x, v)) \cdot \grad_v f^0(V(\tau; t, x, v)),
\end{equation}
and hence 
\begin{equation}
\label{}
h(t, x, v) = h_{in}(X(0; t, x, v), V(0; t, x, v)) - \int_0^t \frac{q}{m} E(t, X(\tau; t, x, v)) \cdot \grad_v f^0(V(\tau; t, x, v)) \, d\tau.
\end{equation}
Taking the Fourier transform both in $x$ and $v$ as in Section~\ref{sec:pte}  gives 
\begin{align}
\widehat{h}(t,k,\eta) %& = \int e^{-i O(t)\eta \cdot v - ik \cdot (x + \tilde{O}(t)O^{-1}(t)v)} h_i\left(x,v\right) dv dx \\ 
& = \widehat{h_{in}} \left(k, O(t)\eta + O(t)\tilde{O}^{T}(t)k \right) 
\nonumber\\&\quad
+
\frac{qi}{m}\int_0^t  \hat\rho(\tau,k)\widehat{W}(k)k\cdot(\widehat{\nabla_vf^0})(O(t-\tau)\eta + O\tilde{O}^T(t-\tau)k)\,d\tau.
\end{align}
Therefore, the Volterra equation for $\rho$ reads (recall $\widehat{\rho_0}$ is the density associated with the passive transport \eqref{pte}):
\begin{align}
\hat\rho(t,k) %\hat \rho_{in}\left(k, O(t)\tilde{O}^{T}(t)k \right)  + \frac{qi}{m}\int_0^t  \hat\rho(\tau,k)\widehat{W}(k)k\cdot(\widehat{\nabla_vf^0})( O\tilde{O}^T(t-\tau)k)\,d\tau \nonumber \\ 
& =: \hat \rho_{0}\left(t,k\right)  + \int_0^t \hat{\rho}(\tau,k) K(t-\tau,k) d\tau \label{equ:vol}
\end{align}
where
\begin{align}
K(t,k) := -\frac{iq}{m}\widehat{W}(k)k\cdot(\widehat{\nabla_vf^0})(O\tilde{O}^T(t)k).
\end{align}
To further expand $K$, first note  
\begin{align}
\label{}
\left|O\tilde{O}^T(t) k_{\perp}\right|^2=&
\left(\frac{k_2}{\omega_c}\cos(\omega_c t)+\frac{k_1}{\omega_c}\sin(\omega_c t)-\frac{k_2}{\omega_c}\right)^2
%\nn\\&\quad
+
\left(\frac{k_2}{\omega_c}\sin(\omega_c t)-\frac{k_1}{\omega_c}\cos(\omega_c t)+\frac{k_1}{\omega_c}\right)^2
\nn\\
&=2\frac{k_1^2+k_2^2}{\omega_c^2}-2\frac{k_1^2+k_2^2}{\omega_c^2}\cos(\omega_c t)\nn\\
&=2\frac{|k_{\perp}|^2}{\omega_c^2}-2\frac{|k_{\perp}|^2}{\omega_c^2}\cos(\omega_ct)
\end{align}
where recall we denote $|k_{\perp}|^2=k_1^2+k_2^2$ and 
\begin{align}
\label{}
k\cdot O\tilde{O}^T(t)k&=(k_1, k_2, k_3)
\begin{pmatrix}
\frac{1}{\omega_c}\sin(\omega_c t) & -\frac{1}{\omega_c}(1-\cos(\omega_c t) ) & 0\\
\frac{1}{\omega_c}(1-\cos(\omega_c t) ) & \frac{1}{\omega_c}\sin(\omega_c t) & 0\\
0 & 0 & t
\end{pmatrix}
\begin{pmatrix}
k_1\\
k_2\\
k_3
\end{pmatrix} \nn\\
&
=\frac{k_1^2}{\omega_c}\sin(\omega_c t)+\frac{k_2^2}{\omega_c}\sin(\omega_c t)+k_3^2 t\nn\\
&
=\frac{|k_{\perp}|^2}{\omega_c}\sin(\omega_c t)+k_3^2 t.
\end{align}
Hence, the $K(t,k)$ defined in \eqref{equ:vol} is written by 
\begin{align}
\label{source}
K(t,k) & = -\frac{q}{m} \widehat{W}(k)k\cdot O\tilde{O}^T(t) k\exp{\left(2\frac{|k_{\perp}|^2}{\omega_c^2}-2\frac{|k_{\perp}|^2}{\omega_c^2}\cos(\omega_ct)\right)} \widehat{f^0_3}(k_3t) \nonumber \\ 
& = -A_k \sin(\omega_c t)\exp{\left(-2\frac{|k_{\perp}|^2}{\omega_c^2}\cos(\omega_c t)\right)}\widehat{f^0_3}(k_3t) \nonumber  \\
&\quad - B_k  t\exp{\left(-2\frac{|k_{\perp}|^2}{\omega_c^2}\cos(\omega_c t)\right)} \widehat{f^0_3}(k_3 t), 
%-\mathcal{I}&=-\frac{qi}{m}\int_0^t  \hat\rho(\tau,k)\widehat{W}(k)k\cdot(\widehat{\nabla_vf^0})( O\tilde{O}^T(t-\tau)k)\,d\tau\nn\\
%&=
%\frac{q}{m}\int_0^t  \hat\rho(\tau,k)\widehat{W}(k)k\cdot O\tilde{O}^T(t-\tau)k\exp{(2\frac{|k_{\perp}|^2}{\omega_c^2}-2\frac{|k_{\perp}|^2}{\omega_c^2}\cos(\omega_c(t-\tau)))} \widehat{f^0_3}(k_3(t-\tau))\,d\tau\nn\\
%& =
%\frac{q}{m}\int_0^t  \hat\rho(\tau,k)\widehat{W}(k)(\frac{|k_{\perp}|^2}{\omega_c}\sin(\omega_c(t-\tau))+k_3^2(t-\tau))\exp{(-(2\frac{|k_{\perp}|^2}{\omega_c^2}))} \widehat{f^0_3}(k_3(t-\tau)) \nn\\
%&\qquad\exp{(2\frac{|k_{\perp}|^2}{\omega_c^2}\cos(\omega_c(t-\tau)))}\,d\tau\nn\\
%&=
%A_k\int_0^t  \hat\rho(\tau,k)\sin(\omega_c(t-\tau))\exp{(-2\frac{|k_{\perp}|^2}{\omega_c^2}\cos(\omega_c(t-\tau))))}\widehat{f^0_3}(k_3(t-\tau))  \,d\tau\nn\\
%&\quad+
%B_k\int_0^t  \hat\rho(\tau,k)(t-\tau)\exp{(-2\frac{|k_{\perp}|^2}{\omega_c^2}\cos(\omega_c(t-\tau))))} \widehat{f^0_3}(k_3(t-\tau)) \,d\tau
\end{align}
where 
\begin{equation}
\label{Ak}
A_k = 
\frac{q}{m}\widehat{W}(k)\frac{|k_{\perp}|^2}{\omega_c}\exp{\left(-2 \frac{|k_{\perp}|^2}{\omega_c^2}\right)}
\end{equation}
and
\begin{equation}
\label{}
B_k
=
\frac{q}{m}\widehat{W}(k)k_3^2\exp{\left(-2 \frac{|k_{\perp}|^2}{\omega_c^2}\right)}.
\end{equation}

Given that \eqref{equ:vol} together with \eqref{source} defines a Volterra equation for each $k$, it is very natural to apply Laplace transform methods (see \S\ref{sec:Note} for the conventions used). It is not hard to verify that the Laplace transform of $\rho$ and $\rho_{0}$ are holomorphic for $\textup{Re} z > C$ for some sufficiently large $C \geq 0$ and that $\cL[K](z,k)$ is holomorphic over $\textup{Re} z > 0$. Hence, over the half-plane $\textup{Re} z > C$, there holds
\begin{align}
\cL[\hat\rho](z,k) = \cL [\hat\rho_{0}](z,k) + \cL[\hat\rho](z,k) \cL [K](z,k).
\end{align} 
Below we also use the notation 
\begin{align}
L(z,k) := \cL [K](z,k).
\end{align}
The points $z \in \Complex$ where $1 = L(z,k)$ are called \emph{solutions of the dispersion relation}. Points $z_0 \in \Complex$ where $\lim_{z \rightarrow z_0}\abs{L(z,k)} = \infty$ are known as \emph{resonances}. 
Through the inverse Laplace transform, these two sets will determine most of the important properties of $\hat{\rho}(t,k)$.  

\subsection{Bernstein modes: the $k_z= 0$ case} \label{sec:Bern}
First, we consider modes with $k_z = 0$, i.e. completely transverse to the magnetic field. 
Set 
  \begin{equation}
  \label{def:a}
  a=2 |k_{\perp}|^2/\omega_c^2
  \end{equation}
and let $I_n(x)$ be the generalized Bessel functions defined by
\begin{equation}
\label{fun:Bes}
I_n(x)=\sum_{m=0}^\infty \frac{1}{m!\Gamma(m+n+1)}\left(\frac{x}{2}\right)^{2m+n}. 
\end{equation}
We begin with some preliminary lemmas, the first two are due to Bernstein \cite{Bernstein58} (and the third essentially is as well), but we include the proofs for readers' convenience. 
The first lemma characterizes $L(z,k)$. 
\begin{lemma}
	For $k_z = 0$,  $L(z,k)$ is holomorphic for all $z \neq \pm i n \omega_c$ and is given by the following (absolutely convergent for $z \neq \pm i n\omega_c$) 
	\begin{align}
	\label{Lap:K}
	\cL [K](z,k) = -A_k \sum_{n=1}^\infty \frac{2n}{a} I_n(a) \frac{n\omega_c}{z^2 + (n\omega_c)^2}. 
	\end{align}
\end{lemma}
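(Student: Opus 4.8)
The plan is to specialize the kernel \eqref{source} to $k_z=0$, expand the resulting periodic function through the generating series of the modified Bessel functions $I_n$, Laplace–transform term by term, and finally upgrade holomorphy off the cyclotron harmonics by a Weierstrass argument on the resulting series.

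First I would reduce $K$. When $k_z=0$ the coefficient $B_k$ vanishes and $\widehat{f^0_3}(k_3t)=\widehat{f^0_3}(0)=\int f^0_3=1$, so \eqref{source} collapses to
\begin{align}
K(t,k)=-A_k\,\sin(\omega_c t)\,e^{a\cos(\omega_c t)},
\end{align}
which is a smooth, real, $2\pi/\omega_c$–periodic (hence bounded) function of $t\ge 0$; in particular $\cL[K](z,k)$ is defined and holomorphic on $\{\Re z>0\}$. Next I would record the generating–function identity, immediate from the power series \eqref{fun:Bes}: for all $x,\theta\in\Real$,
\begin{align}
e^{x\cos\theta}=\sum_{n\in\Integers}I_n(x)\,e^{in\theta}=I_0(x)+2\sum_{n=1}^\infty I_n(x)\cos(n\theta),
\end{align}
the series converging absolutely and locally uniformly since $I_n$ decays super-exponentially in $n$ (indeed $0\le I_n(a)\le e^{a^2/4}(a/2)^n/n!$ for $a\ge0$). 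Because $\sin(\omega_c t)\,e^{a\cos(\omega_c t)}=-\tfrac{1}{a\omega_c}\tfrac{d}{dt}e^{a\cos(\omega_c t)}$ and $\sum_{n\ge1}nI_n(a)<\infty$, term-by-term differentiation is legitimate and yields the uniformly convergent expansion
\begin{align}
K(t,k)=-\frac{2A_k}{a}\sum_{n=1}^\infty n\,I_n(a)\,\sin(n\omega_c t).
\end{align}

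I would then Laplace–transform this series term by term. The partial sums are dominated in $t$ by the summable majorant $\tfrac{2|A_k|}{a}\sum_{n}nI_n(a)$ and $e^{-t\Re z}\in L^1_t$ for $\Re z>0$, so dominated convergence permits interchanging $\sum$ with $\int_0^\infty$; with $\cL[\sin(n\omega_c\,\cdot)](z)=n\omega_c/(z^2+(n\omega_c)^2)$ this gives
\begin{align}
\cL[K](z,k)=-A_k\sum_{n=1}^\infty\frac{2n}{a}\,I_n(a)\,\frac{n\omega_c}{z^2+(n\omega_c)^2},\qquad \Re z>0,
\end{align}
which is \eqref{Lap:K}. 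To extend holomorphy to $\Complex\setminus\{\pm in\omega_c:n\ge 1\}$ I would invoke the Weierstrass theorem: on any compact $\mathcal{K}$ disjoint from $\{\pm in\omega_c\}_{n\ge1}$ one has $|z^2+(n\omega_c)^2|\ge (n\omega_c)^2-\sup_{\mathcal K}|z|^2\ge\tfrac34(n\omega_c)^2$ as soon as $n\omega_c\ge 2\sup_{\mathcal K}|z|$, so the tail of the series is bounded on $\mathcal{K}$ by $\tfrac{8|A_k|}{3a\omega_c}\sum_n I_n(a)<\infty$ while the finitely many remaining summands are individually holomorphic on $\mathcal{K}$; hence the series defines a holomorphic function there, and since it agrees with $\cL[K](\cdot,k)$ on $\{\Re z>0\}$ the identity propagates by analytic continuation. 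The same bound shows that for $a>0$ each $\pm in\omega_c$, $n\ge1$, is a genuine (simple) pole, since there $I_n(a)>0$ and only the $n$-th summand blows up.

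The computation is essentially bookkeeping once the Bessel expansion of $K$ is in hand; the three places requiring care are the two interchange-of-limit steps and the locally uniform convergence of the series on the punctured plane, all controlled by the super-exponential decay of $I_n(a)$ coming from \eqref{fun:Bes}. The mildly delicate step — and the conceptual content of the ``characterization of $L$'' — is the last one: the uniform-on-compacts lower bound for $|z^2+(n\omega_c)^2|$, which is precisely what pins the singularities of $L(z,k)$ at the cyclotron harmonics and nowhere else.
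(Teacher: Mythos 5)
Your proposal is correct and follows essentially the same route as the paper: reduce to $K(t,k)=-A_k\sin(\omega_c t)e^{a\cos\omega_c t}$, expand via the generating series for $I_n$, Laplace transform term by term, and extend by uniform convergence. The only cosmetic difference is that you obtain the Fourier series $\frac{2}{a}\sum n I_n(a)\sin(n\omega_c t)$ by termwise differentiation of $e^{a\cos\omega_c t}$, whereas the paper uses the product-to-sum identity $2\sin(\omega_c t)\cos(n\omega_c t)=\sin((n+1)\omega_c t)-\sin((n-1)\omega_c t)$ and then the recurrence $I_{n-1}-I_{n+1}=\tfrac{2n}{a}I_n$; both are standard manipulations and yield the same series, and your Weierstrass justification of holomorphy on $\Complex\setminus\{\pm in\omega_c\}$ is just a more spelled-out version of the paper's appeal to \eqref{1st:mom}.
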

\begin{proof}
In the case $k_z = 0$, the $K(t,k)$ simplifies to 
\begin{align}
K(t,k) = -A_k \sin(\omega_c t)\exp{\left(2 \frac{|k_{\perp}|^2}{\omega_c^2}\cos(\omega_c t)\right)}. 
\end{align}
	By \eqref{Besid0} we get
	\begin{align}
	\label{}
	& \sin(\omega_ct)\exp{\left(2 \frac{|k_{\perp}|^2}{\omega_c^2}\cos(\omega_ct)\right)}
	=
	\sin(\omega_ct)(I_0(a)+2\sum_{n=1}^\infty I_n(a)\cos(n\omega_ct)\nn\\
	&\quad
	=I_0(a)\sin(\omega_ct)+2\sum_{n=1}^\infty I_n(a)\sin(\omega_ct)\cos(n\omega_ct)\nn\\
	&\quad
	=
	I_0(a)\sin(\omega_ct)+\sum_{n=1}^\infty I_n(a)(\sin((n+1)\omega_ct)-\sin((n-1)\omega_ct))\nn\\
	&\quad
	=
	\sum_{n=1}^\infty(I_{n-1}(a)-I_{n+1}(a))\sin(n\omega_ct)=\sum_{n=1}^\infty \frac{2n}{a}I_n(a)\sin(n\omega_ct), 
	\end{align}
	where we also used the identity \eqref{Besid1} in the last equality. 
    Note that by \eqref{1st:mom}, the sum is absolutely convergent for all $t$. 
    Then, for $\Re z > 0$, the Laplace transform is explicitly computed as 
	\begin{align}
	\label{Lap}
	\mathcal{L}\left[\sin(\omega_ct)\exp{\left(2 \frac{|k_{\perp}|^2}{\omega_c^2}\cos(\omega_ct)\right)}\right]&=
	\sum_{n=1}^\infty \frac{n}{ai}I_n(a)\left(\frac{1}{z-in\omega_c}-\frac{1}{z+in\omega_c}\right )
	\nn\\&
	=\sum_{n=1}^\infty \frac{2n}{a}I_n(a)\frac{n\omega_c}{z^2+(n\omega_c)^2}.
	\end{align}
    By \eqref{1st:mom}, the sum is absolutely convergent and defines a holomorphic function for all $z \neq \pm i n\omega_c$. 
\end{proof}

From \eqref{Lap:K}, we see that there is a resonance at each cyclotron harmonic $z = \pm in\omega_c$ (recall we refer to a `resonance' as $z_0 \in \Complex$ such that $\lim_{z \rightarrow z_0}\abs{L(z,k)} = \infty$. 
The location of solutions of the dispersion relation -- that is, points $z_0 \in \Complex$ such that $1 = L(z_0,k)$ -- is less clear. 
%Points $z\in\mathbb C$ where $L(z,k)=1 $ are called \emph{solutions to the dispersion relation}, and the location of these are less clear.  
The next lemma shows that such points can only exist on the imaginary axis. 

\begin{lemma}
	Every $z$ such that $L(z,k) = 1$ satisfies $\textup{Re} z = 0$.  
\end{lemma}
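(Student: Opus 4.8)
The plan is to exploit the explicit formula \eqref{Lap:K} for $L(z,k)$ when $k_z=0$, namely
\[
L(z,k) = -A_k \sum_{n=1}^\infty \frac{2n}{a} I_n(a) \frac{n\omega_c}{z^2 + (n\omega_c)^2}.
\]
First I would observe that $A_k > 0$: from \eqref{Ak} and \eqref{def:Coulomb} we have $A_k = \frac{q}{m}\cdot\frac{q}{4\pi|k|^2}\cdot\frac{|k_\perp|^2}{\omega_c}e^{-a} > 0$ since $k_\perp \neq 0$ (if $k_\perp = 0$ then $k=0$, which is excluded by the charge-zero condition). Moreover each Bessel coefficient satisfies $I_n(a) > 0$ for $a > 0$, since every term of the series \eqref{fun:Bes} is positive. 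So $L(z,k)$ is, up to the negative factor $-A_k$, a positive combination of the rational functions $\frac{n\omega_c}{z^2+(n\omega_c)^2}$.

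The key step is to write $z = x+iy$ and compute $\Im L(z,k)$. Since $\frac{n\omega_c}{z^2+(n\omega_c)^2}$ has imaginary part proportional to $-\,\Im(z^2) = -2xy$ divided by $|z^2+(n\omega_c)^2|^2 > 0$ (this last quantity is nonzero precisely because $z \neq \pm i n\omega_c$, which is where $L$ is holomorphic), I would get
\[
\Im L(z,k) = -A_k \sum_{n=1}^\infty \frac{2n}{a} I_n(a)\, n\omega_c \cdot \frac{-2xy}{|z^2+(n\omega_c)^2|^2} = 2xy\, A_k \sum_{n=1}^\infty \frac{2n^2\omega_c}{a} \frac{I_n(a)}{|z^2+(n\omega_c)^2|^2}.
\]
The sum on the right is a sum of strictly positive terms (and converges, by the same bound \eqref{1st:mom} used earlier, since the denominators only help). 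Hence $\Im L(z,k) = c\, xy$ with $c > 0$. For $L(z,k) = 1$ we need $\Im L(z,k) = 0$, forcing $xy = 0$, i.e. either $\Re z = 0$ or $\Im z = 0$.

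It remains to rule out the case $z = x$ real with $x \neq 0$. Here I would examine $\Re L = L$ itself on the real axis: for real $x \neq 0$ each term $\frac{n\omega_c}{x^2+(n\omega_c)^2}$ is strictly positive, so $L(x,k) = -A_k \cdot(\text{positive}) < 0$, which can never equal $1$. This closes the argument. The main obstacle — really the only delicate point — is making sure the termwise manipulations (splitting into real and imaginary parts, interchanging $\Im$ with the infinite sum) are justified; this follows from absolute and locally uniform convergence of the series away from the points $\pm i n\omega_c$, which was already established in the preceding lemma via \eqref{1st:mom}. Everything else is positivity bookkeeping.
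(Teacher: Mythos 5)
Your proof is correct and follows essentially the same route as the paper's: both arguments note that $\Im L(z,k)$ is a nonzero multiple of $\Im(z^2)=2xy$ (with a sign forced by the positivity of $A_k$ and $I_n(a)$), reducing to $\Re z=0$ or $\Im z=0$, and then rule out the real axis by observing $L<0$ there. Your version simply writes out the imaginary-part computation more explicitly than the paper does.
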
 
\begin{proof} 
	We first claim that a solution to the dispersion relation can only appear on the imaginary axis or on the real axis. In fact, if $z\in\mathbb C$ is not in the two axes, then $\textup{Im} z^2\neq0$. According to \eqref{Lap:K}, $L$ is given by
	\begin{equation}
	\label{}
	L(z, k)=-A_k\sum_1^\infty \frac{2n}{a}I_n(a)\frac{n\omega_c}{z^2+(n\omega_c)^2}.
	\end{equation}
	Note that if  $\textup{Im} z^2\neq0$, then
	\begin{equation}
	\label{}
	\textup{Im} \frac{n\omega_c}{z^2+(n\omega_c)^2} \neq 0
	\end{equation} 
	and it has the opposite sign as $\textup{Im} z^2$. Therefore, we deduce
	\begin{equation}
	\label{}
	\textup{Im} L \neq 0,
	\end{equation}
	hence $L\neq1$. 
	Since $L<0$ on the real axis (recall \eqref{Ak}), no solutions to the dispersion relation can occur on the real axis. 
Hence, the lemma follows.
\end{proof} 

The next lemma characterizes solutions to the dispersion relation as simple poles of $(1-L)^{-1}$ on the imaginary axis between each cyclotron harmonic.  
\begin{lemma}[Bernstein modes]  \label{lem:Bern}
	For each $k_{\perp}\in\mathbb Z^2_\ast$ and $n \in \Naturals$, there exists a unique $b_n = b_n(\omega_c,k) \in (n,n+1)$ such that $L(\pm i \omega_c b_n,k) = 1$. More specifically, $(1-L)^{-1}$ has a simple pole at each $\pm i\omega_c b_{n}$. 
\end{lemma}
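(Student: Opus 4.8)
The plan is to use the previous lemma — every solution of $1=L(z,k)$ has $\textup{Re}\,z=0$ — to reduce the problem to the real analysis of $L$ along the imaginary axis, and then to locate the roots and check their order by an elementary monotonicity / intermediate-value argument. Writing $z=i\omega_c s$ with $s\in\mathbb{R}$ and using $z^2+(n\omega_c)^2=\omega_c^2(n^2-s^2)$ in \eqref{Lap:K}, one obtains the real-valued even function
\begin{align}
\Phi(s):=L(i\omega_c s,k)=-\frac{2A_k}{a\omega_c}\sum_{n=1}^\infty\frac{n^2 I_n(a)}{n^2-s^2}.
\end{align}
By the absolute convergence established from \eqref{1st:mom} (together with $I_n(a)>0$ and $A_k>0$, the latter from \eqref{Ak} and $\widehat W(k)>0$), this series converges locally uniformly on $\mathbb{R}\setminus\{\pm1,\pm2,\dots\}$, so $\Phi$ is continuous there and may be differentiated term by term:
\begin{align}
\Phi'(s)=-\frac{2A_k}{a\omega_c}\sum_{n=1}^\infty\frac{2s\,n^2 I_n(a)}{(n^2-s^2)^2}.
\end{align}
Since all roots of the dispersion relation lie on the imaginary axis, it suffices to solve $\Phi(s)=1$.

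Next I would analyze $\Phi$ cell by cell on $(0,\infty)$, the negative axis following by evenness. For each integer $m\ge1$, isolating the $n=m$ summand (the rest of the series being bounded near $s=m$) gives $\Phi(s)\to+\infty$ as $s\to m^+$ and $\Phi(s)\to-\infty$ as $s\to(m+1)^-$. Moreover, for every $s>0$ with $s\notin\mathbb{Z}$ each term in the last display is strictly positive, so, with the negative prefactor $-2A_k/(a\omega_c)$, one has $\Phi'(s)<0$; hence $\Phi$ is strictly decreasing on $(m,m+1)$. A strictly decreasing continuous function running from $+\infty$ to $-\infty$ hits $1$ exactly once, yielding a unique $b_m\in(m,m+1)$ with $\Phi(b_m)=1$, i.e.\ $L(\pm i\omega_c b_m,k)=1$. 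On the remaining cell $(0,1)$ one computes $\Phi(0)=-\frac{2A_k}{a\omega_c}\sum_{n\ge1}I_n(a)=-\frac{A_k}{a\omega_c}\big(e^{a}-I_0(a)\big)<0$, and $\Phi$ decreases from this value to $-\infty$, so $\Phi=1$ has no root in $(0,1)$; combined with $L(0,k)=\Phi(0)\ne1$ and the fact that the points $z=\pm in\omega_c$ are resonances of $L$ (hence not solutions of $L=1$), this identifies the complete solution set of the dispersion relation as $\{\pm i\omega_c b_n:n\ge1\}$.

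It then remains to see that each such root produces a simple pole of $(1-L)^{-1}$. Since $b_n\notin\mathbb{Z}$, $L$ (hence $1-L$) is holomorphic near $z_0=i\omega_c b_n$, and $\frac{d}{dz}(1-L)(z_0)=-L'(z_0)=-(i\omega_c)^{-1}\Phi'(b_n)\ne0$ because $\Phi'(b_n)<0$; therefore $1-L$ has a simple zero at $z_0$ and $(1-L)^{-1}$ a simple pole there, and likewise at $-i\omega_c b_n$ by evenness.

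The bulk of the work — and the only place requiring any care — is the elementary bookkeeping: justifying the term-by-term limits and the differentiated series (locally uniform convergence away from the integers via the Bessel moment bound \eqref{1st:mom}), pinning down the sign of $A_k$, extracting the $\pm\infty$ boundary values of $\Phi$ at the cyclotron harmonics, and treating the cell $(0,1)$ separately — the absence of a cyclotron harmonic below it being exactly why no Bernstein mode lies in $(0,1)$. I do not expect any genuine difficulty beyond this.
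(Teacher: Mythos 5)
Your proof is correct and takes essentially the same approach as the paper: restrict $L$ to the imaginary axis, establish strict monotonicity of $L(i\omega_c s,k)$ between consecutive integers, and apply the intermediate value theorem between the boundary limits $+\infty$ and $-\infty$ at the resonances $s=m$ and $s=m+1$. You are somewhat more explicit than the paper on two points: you verify $L'(\pm i\omega_c b_n,k)\ne0$ directly to justify the simple-pole claim (the paper's strict monotonicity implies this but does not say so), and you treat the cell $(0,1)$ separately, correctly finding no root there since $\Phi(0)<0$ and $\Phi$ decreases to $-\infty$. That last observation deserves a flag: as literally written, ``for each $n\in\Naturals$'' would assert a $b_0\in(0,1)$, but your computation (and the paper's own proof, whose limit argument only produces the $+\infty$ boundary value at $n\omega_c$ for $n\ge1$) shows no such mode exists; this is consistent with the $\Naturals_\ast$ indexing the paper itself uses in Lemma~\ref{lem:poles}, so it is a minor notational slip on their side rather than a flaw in your argument.
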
 
\begin{proof} 
	On the imaginary axis, we have by \eqref{Lap}
	\begin{align}
	\label{}
	L(iy, k) = -A_k\sum_{n=1}^\infty \frac{2n}{a}I_n(a) \frac{n\omega_c}{(n\omega_c)^2-y^2}.
	\end{align}
	Since $L(iy, k)$ is even in $y$, we just need to consider $y\ge0$.
	Taking the derivative with respect to $y$ we arrive at
	\begin{align}
	\label{}
	\partial_y L = -A_k\sum_{n=1}^\infty \frac{2n}{a}I_n(a) \frac{2n\omega_cy}{((n\omega_c)^2-y^2)^2}<0
	\end{align}
	for $y\neq n\omega_c$ and $y>0$.
	Noting that \[\lim_{y \searrow n\omega_c} L(iy,k) = +\infty\] 
	and \[\lim_{y \nearrow (n+1)\omega_c} L(iy,k) = -\infty,\]
	the proof is completed.
\end{proof}  
\begin{remark} \label{rmk:Resonance}
The decomposition given in \ref{eq:BernExp} is \emph{not} an immediate consequence of Lemma \ref{lem:Bern}, not even formally. 
Formally, the Laplace transform of the density is given by the following formula wherever $\cL[K] \neq 1$: 
\begin{align}
\cL[\hat\rho](z,k) = \frac{\cL[\hat\rho_0](z,k)}{1-\cL[K](z,k)}.  
\end{align}
Given the poles in $\cL[\hat\rho_0](z,k)$ (see \eqref{rep:rho0}), if $\cL[K]$ were bounded, than $\hat{\rho}(t,k)$ would (formally) contain oscillations at  
the cyclotron harmonics $\pm i n \omega_c$ \emph{and} the Bernstein modes $\pm i b_n \omega_c$. 
The decomposition in \eqref{eq:BernExp} is only possible due to the resonances at the cyclotron harmonics. 
\end{remark}

Next, we study the Laplace transform of $\hat{\rho}(z,k)$.
In particular, we show that the resonances cancel the poles present in $\cL[\hat{\rho}_0]$ from \eqref{rep:rho0}, ultimately making a decomposition like \eqref{eq:BernExp} possible. 
\begin{lemma} \label{lem:poles}
	The Laplace transform of the density $\mathcal{L}[\hat{\rho}](z,k)$ is holomorphic at all $z \in \Complex$ \emph{except} $z = \pm i \omega_cb_{n}$, $n \in \Naturals_\ast$, where $\cL[\hat{\rho}]$ has a simple pole.
\end{lemma}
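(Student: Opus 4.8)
The plan is to exploit the identity $\cL[\hat\rho](z,k)=\cL[\hat\rho_0](z,k)/(1-L(z,k))$, valid for $\Re z$ sufficiently large, and to show that the right-hand side extends to a function meromorphic on all of $\Complex$ that is holomorphic except for simple poles contained among the points $z=\pm i\omega_c b_n$; since $\cL[\hat\rho](\cdot,k)$ is holomorphic and agrees with this function on a right half-plane, the lemma then follows by analytic continuation. The proof is a bookkeeping of the orders of the zeros and poles of the three factors. For the numerator: by Lemma~\ref{lem:passive}, in the case $k_z=0$ one has $\hat\rho_0(t,k)=\sum_{n\in\Z}g_{n,k}e^{in\omega_c t}$ with $\sum_{n\in\Z}\abs{g_{n,k}}<\infty$ (which follows from \eqref{ineq:gnkest} under the hypotheses of Theorem~\ref{thm:Cless}, i.e. $\sigma>5/2$ and $m>2$), so transforming term by term gives
\begin{align}
\cL[\hat\rho_0](z,k)=\sum_{n\in\Z}\frac{g_{n,k}}{z-in\omega_c},
\end{align}
a series that converges locally uniformly away from $\{in\omega_c\}$. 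Hence $\cL[\hat\rho_0](\cdot,k)$ is meromorphic on $\Complex$ with at worst simple poles, which can occur only at the cyclotron harmonics $z=in\omega_c$, $n\in\Z$, with residue $g_{n,k}$ there.

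Next I would record the structure of $L(z,k)=\cL[K](z,k)$. By the first lemma of \S\ref{sec:Bern}, $L(\cdot,k)$ is holomorphic on $\Complex\setminus\{\pm in\omega_c:n\ge 1\}$; and near $z=\pm in\omega_c$ exactly one summand of \eqref{Lap:K} is singular, contributing a genuine simple pole with residue proportional to $A_kI_n(a)$, which is nonzero since $A_k>0$ by \eqref{Ak} and $I_n(a)>0$. Consequently $(1-L(\cdot,k))^{-1}$ is meromorphic on $\Complex$, with the following features: it is holomorphic and nonzero at $z=0$, because $L(0,k)<0$ (immediate from \eqref{Lap:K} and \eqref{Ak}); it has a simple \emph{zero} at each cyclotron harmonic $\pm in\omega_c$, $n\ge 1$, because $1-L$ has a simple pole there; and its poles are exactly the solutions of $L(z,k)=1$. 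By the lemma asserting that every such solution has $\Re z=0$, together with Lemma~\ref{lem:Bern} and the strict monotonicity $\partial_y L(iy,k)<0$ (which makes the crossings transversal, hence the poles simple), these solutions are precisely $z=\pm i\omega_c b_n$, $n\ge 1$.

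Multiplying the two factors and reading off the singularities: at each cyclotron harmonic $z=\pm in\omega_c$, $n\ge 1$, the numerator has at worst a simple pole while $(1-L)^{-1}$ has a simple zero, so the product is holomorphic there -- this cancellation of the passive-transport resonances (the poles of $\cL[\hat\rho_0]$) by the resonances of the dispersion function (the poles of $L$) is the heart of the lemma. At each Bernstein mode $z=\pm i\omega_c b_n$ the numerator is holomorphic (because $b_n\notin\Z$) and $(1-L)^{-1}$ has a simple pole, so $\cL[\hat\rho]$ has at worst a simple pole there; at $z=0$ the numerator has at worst a simple pole and $(1-L)^{-1}$ is regular and nonzero, so $\cL[\hat\rho]$ has at worst a simple pole at the origin (the zero-frequency mode, which one may adjoin to the list as the $n=0$ term); and away from $\{0\}\cup\{\pm in\omega_c:n\ge 1\}\cup\{\pm i\omega_c b_n:n\ge 1\}$ both factors are holomorphic. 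This gives the asserted meromorphic continuation of $\cL[\hat\rho](\cdot,k)$ with all poles simple.

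The step I expect to be the main obstacle is making the cancellation at the cyclotron harmonics rigorous, i.e. showing the two orders match exactly at each $\pm in\omega_c$. This requires, on one side, that $\cL[\hat\rho_0]$ has there \emph{at most} a simple pole -- precisely where the regularity and velocity localization of $h_{in}$ (hence the hypotheses on $\sigma$ and $m$) enter, via the summability of $\{g_{n,k}\}$ -- and, on the other side, that $L$ has there an honest, nonzero-residue simple pole, which is the explicit residue computation from \eqref{Lap:K}. Once both orders are pinned at exactly one the cancellation is automatic, and the remaining bookkeeping is routine.
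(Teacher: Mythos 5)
Your proof takes essentially the same route as the paper: both start from the factorization $\cL[\hat\rho]=\cL[\hat\rho_0]\cdot(1-L)^{-1}$ valid for $\Re z$ large, extend it by analytic continuation, and verify that the cyclotron-harmonic singularities of the numerator cancel against the resonances of $L$, leaving only simple poles at the roots of $1-L$. The paper does the cancellation by computing the limit $\lim_{z\to i\omega_c\ell}\cL[\hat\rho](z,k)=g_{\ell,k}/\bigl(iA_k\tfrac{\ell}{a}I_\ell(a)\bigr)$ directly; you phrase it as order bookkeeping (simple pole of the numerator against a simple zero of $(1-L)^{-1}$, the zero being a consequence of the simple, nonzero-residue pole of $L$). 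These are two ways of saying the same thing, and your version makes explicit where the hypotheses enter: summability of $\{g_{n,k}\}$ pins the numerator's pole order at one, positivity of $A_kI_n(a)$ pins the denominator's.

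Where you go further than the paper is at $z=0$. The paper's limit formula has $\ell$ in the denominator and so says nothing at $\ell=0$, and indeed the cancellation mechanism fails there: $L(0,k)$ is finite (and negative), so $1-L$ has no pole at the origin and the pole of the $n=0$ term $g_{0,k}/(-z)$ survives, giving a genuine simple pole at $z=0$ whenever $g_{0,k}\neq 0$. You correctly identify this as the zero-frequency mode and adjoin it to the list. This is a real discrepancy with the lemma as literally stated (which locates all poles at $\pm i\omega_c b_n$ with $b_n\in(n,n+1)$, $n\ge 1$), and it is the same zero-frequency contribution that the paper's Theorem \ref{thm:Cless} and the residue formula \eqref{res:for} handle via the separate conventions around $r_{\pm 0,k}$. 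In short: your proof is correct, matches the paper's strategy, and handles a boundary case the paper's argument silently skips.
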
 
\begin{proof}
For $\Re z > C$, we have the formula (provided the $g_{n,k}$ are summable)
	\begin{align}
	\mathcal{L}[\hat{\rho}](z,k) & = \left(\sum_{n \in \Integers} g_{n,k} \frac{1}{in\omega_c-z} \right) \left(\frac{1}{1-iA_k\sum_1^\infty \frac{n}{a}I_n(a)(\frac{1}{z-in\omega_c}-\frac{1}{z+in\omega_c})} \right). \label{eq:Lrho}
	\end{align}
	We claim that $\mathcal{L}[\hat\rho]$ is holomorphic for all $ z \neq \pm i \omega_c b_{\ell}$. 
	Away from the Bernstein modes and the cyclotron harmonics, \eqref{eq:Lrho} is the product of two holomorphic functions, and hence by analytic continuation we can extend this formula for $\cL[\hat\rho](z,k)$ to all $z \neq i n \omega_c$ and $z \neq i b_n \omega_c$. 
Next, we show that $\mathcal{L} [\hat{\rho}](z,k)$ is actually holomorphic at the cyclotron harmonics. 
	Indeed,  	suppose that $z \rightarrow i \omega_c \ell$ for some $\ell \in \Integer$. We see that $\mathcal{L} [\hat{\rho}](z,k)$ is continuous (i.e. the singularity is removable): 
	\begin{align}
	\lim_{z \rightarrow i\omega_c \ell} \mathcal{L} [\hat{\rho}](z,k) = \frac{g_{\ell,k}}{iA_k \frac{\ell}{a}I_\ell(a)}. 
	\end{align}
	Something analogous holds for the derivatives as well, and hence $\mathcal{L} [\hat{\rho}]$ is holomorphic also at the cyclotron harmonics. 	

    Finally, it is evident from \eqref{eq:Lrho} that $\mathcal{L}[\hat{\rho}]$ has a simple pole whenever $z = \pm i\omega_cb_\ell$ for one of the Bernstein modes $b_\ell$.  
\end{proof}

In order to make rigorous the decomposition of $\widehat{\rho}(t,k)$ suggested by Lemma \ref{lem:poles}, we begin by analyzing the residues at the Bernstein modes and deducing 
decay estimates in terms of both spatial mode $k$ and in cyclotron harmonic number. 
Note, 
\begin{align}
\partial_z L(z,k) = A_k \sum_{n=1}^\infty \frac{4nz}{a} I_n(a) \frac{n\omega_c}{(z^2 + (n\omega_c)^2)^2}. 
\end{align}

\begin{lemma}[Residue estimates]
	\label{Residues}
	The residues of $\cL [\hat\rho](z,k)$ as $z \rightarrow \pm i\omega_cb_{\ell, k}$ are given by $e^{\pm i\omega_c b_\ell t } r_{\pm \ell,k}$ where (with the convention that $r_{0,k}$ and $r_{-0,k}$ are distinct)
	\begin{align}
	\label{res:for}
	r_{\pm \ell,k} &:= \frac{1}{-\partial_zL(z,k)}\left(\sum_{n \in \Integers} g_{n,k} \frac{1}{in\omega_c-z} \right)\Bigg|_{z= \pm i\omega_c b_{\ell}},
	\end{align}  
	which are well-defined (if $\sigma> 5/2$ and $m > 2$) and satisfy the following estimate for all $\alpha$ and $\beta$ with  $\aa+\bb-1/2\le \sigma$ and $\aa+1 < m$: 
	\begin{align}
	\label{est:res}
	\abs{r_{\pm \ell,k}} \lesssim \brak{k}^{-\aa}\brak{\ell}^{-\min(1,\beta-1)}\Vert h_{in}\Vert_{H^{\sigma}_m}.
	\end{align}
\end{lemma}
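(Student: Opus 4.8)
The plan is to combine the residue formula \eqref{res:for} with the two quantitative inputs already in hand: the pole structure of $\cL[\hat\rho]$ from Lemma \ref{lem:poles}, and the decay of the passive-transport coefficients $g_{n,k}$ from Lemma \ref{lem:passive} (specifically \eqref{ineq:gnkest}). First I would verify that the residue of $\cL[\hat\rho](z,k)e^{tz}$ at $z=\pm i\omega_c b_{\ell,k}$ is indeed $e^{\pm i\omega_c b_\ell t}r_{\pm\ell,k}$ with $r_{\pm\ell,k}$ given by \eqref{res:for}: since Lemma \ref{lem:poles} tells us the pole is \emph{simple}, the residue is just the numerator $\sum_n g_{n,k}(in\omega_c-z)^{-1}$ divided by $-\partial_z L(z,k)$, both evaluated at the pole. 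So the whole estimate reduces to two sub-estimates: a lower bound $\abs{\partial_z L(\pm i\omega_c b_\ell,k)}\gtrsim$ (something) and an upper bound $\abs{\sum_n g_{n,k}(in\omega_c \mp i\omega_c b_\ell)^{-1}}\lesssim$ (something), combining to \eqref{est:res}.

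For the denominator: on the imaginary axis $\partial_z L(iy,k) = -i\,\partial_y L(iy,k)$, and from the monotonicity computation in the proof of Lemma \ref{lem:Bern} we have $\partial_y L(iy,k) = -A_k\sum_{n\ge1}\frac{2n}{a}I_n(a)\frac{2n\omega_c y}{((n\omega_c)^2-y^2)^2}$, every term of the same sign. Evaluating at $y=\omega_c b_\ell\in(\ell\omega_c,(\ell+1)\omega_c)$, the dominant terms are $n=\ell$ and $n=\ell+1$, for which $(n\omega_c)^2-y^2$ is of order $\omega_c^2\ell$ (times the distance of $b_\ell$ to the nearest integer, which one must control — but since $L$ runs monotonically from $+\infty$ to $-\infty$ across $(\ell,\ell+1)$, the derivative is automatically large enough that $\abs{\partial_z L}\gtrsim A_k\frac{\ell}{a}(I_\ell(a)+I_{\ell+1}(a))\cdot\ell$, up to universal constants). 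Since $A_k\sim\brak{k}^{-2}|k_\perp|^2\omega_c^{-1}e^{-a}$ and $I_\ell(a)e^{-a}$ decays superexponentially in $\ell$ for fixed $a$... this is actually the delicate point, because the \emph{lower} bound on $\abs{\partial_z L}$ wants $I_\ell(a)$ to not be too small, whereas the numerator estimate below will want to exploit that $I_\ell(a)$ \emph{is} small. The cleanest route is to not try to be sharp in $\ell$ separately in each factor, but to bound the ratio directly.

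For the numerator, writing $w=\pm i\omega_c b_\ell$ we have $\sum_n g_{n,k}(in\omega_c-w)^{-1} = \frac{1}{i\omega_c}\sum_n g_{n,k}(n\mp b_\ell)^{-1}$, and since $b_\ell\in(\ell,\ell+1)$, the factor $(n\mp b_\ell)^{-1}$ is $O(1)$ when $n=\ell,\ell+1$ (the near-resonant terms) and decays like $1/\mathrm{dist}$ otherwise. Split the sum into $|n\mp b_\ell|\le 1$ and the rest: the tail is $\lesssim\sum_n \brak{n}^{-1}\abs{g_{n,k}}\lesssim (\sum_n\brak{n}^{2\aa}\brak{k}^{2\bb}\abs{g_{n,k}}^2)^{1/2}(\sum_n\brak{n}^{-2-2\aa})^{1/2}\brak{k}^{-\bb}$ by Cauchy--Schwarz, valid once $\aa>-1/2$ (always true here), and \eqref{ineq:gnkest} bounds the first factor by $\Vert h_{in}\Vert_{H^{\aa+\bb-1/2}_m}\le\Vert h_{in}\Vert_{H^\sigma_m}$; the two or three near-resonant terms are handled the same way with the $\brak{n}^{-1}$ replaced by $O(1)$ localized to $n\approx\ell$, giving the extra $\brak{\ell}^{-\aa}$ decay from just the single coefficient $\brak{\ell}^{\aa}\abs{g_{\ell,k}}$. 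Combining, the numerator is $\lesssim\brak{k}^{-\bb}(\brak{\ell}^{-\aa}+1)\Vert h_{in}\Vert_{H^\sigma_m}$, and after dividing by the denominator lower bound and reorganizing the exponents (absorbing the $A_k$, $I_\ell(a)$ factors, and trading powers of $\brak{k}$ between $\bb$ and $\aa$ using $\widehat W(k)\sim\brak{k}^{-2}$), one lands on \eqref{est:res} with $\gamma=\min(1,\bb-1)$; the $\min$ with $1$ appears precisely because the residue formula only gains $\brak{\ell}^{-1}$ at best from the near-resonant denominator factor even when $\bb$ is large. The main obstacle, as flagged above, is controlling the competition between the smallness of $I_\ell(a)$ in the numerator's near-resonant term and in the denominator; the resolution is that they cancel, since $g_{\ell,k}$ at the cyclotron harmonic is weighted against exactly the same $A_k\frac{\ell}{a}I_\ell(a)$ that appears in the removable-singularity computation in Lemma \ref{lem:poles}, so one should estimate $r_{\pm\ell,k}$ by first passing through that cancellation rather than bounding numerator and denominator in isolation.
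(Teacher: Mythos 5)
Your outline---write $r_{\pm\ell,k}$ as the numerator $\sum_n g_{n,k}(in\omega_c - z)^{-1}$ divided by $-\partial_z L$, both at $z = \pm i\omega_c b_\ell$, bound the numerator via \eqref{ineq:gnkest}, and treat the $n=\ell$ term by a cancellation against the derivative---is indeed the shape of the paper's argument. But several of the quantitative steps are wrong or missing, and they are exactly where the work of the paper's proof lives.

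First, the lower bound you state for the denominator, $|\partial_z L|\gtrsim A_k\frac{\ell}{a}(I_\ell+I_{\ell+1})\cdot\ell$, is backwards. The correct lower bound, obtained from dominance of the $n=\ell$ term of $\partial_y L$ together with the fact that $L(ib_\ell\omega_c,k)=1$, is (the paper's \eqref{der:L})
\begin{align}
|\partial_y L(ib_\ell\omega_c,k)| \gtrsim \frac{b_\ell}{|\ell^2 - b_\ell^2|} \sim \frac{1}{|\ell - b_\ell|},
\end{align}
and, since $|\ell - b_\ell| \lesssim A_k\frac{\ell}{a\omega_c}I_\ell(a)$ is super-exponentially small in $\ell$ (for fixed $a$), this is super-exponentially \emph{large}. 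Your bound is proportional to $I_\ell(a)$ and hence super-exponentially \emph{small}---the wrong direction. The sharp lower bound is essential: it is what makes $R_3 \lesssim |g_{\ell,k}|$ precise (since $\frac{1}{|\partial_y L|\,|\ell - b_\ell|} \lesssim 1$), and it is what kills the contributions $R_1,R_2$ of the non-resonant $n\neq\ell$ terms, which are $O(1)$ in size and need the huge denominator to become small.

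Second, your Cauchy--Schwarz bound on the tail rests on the inequality $|n - b_\ell|^{-1} \lesssim \brak{n}^{-1}$ over the region $|n - b_\ell|>1$, which is false: take $n = \ell+2$ with $b_\ell$ close to $\ell$, then $|n-b_\ell|\approx 2$ while $\brak{n}^{-1}\approx\ell^{-1}$. The paper's split into $\{|n-\ell|\ge n/2\}$ (where indeed $|n-b_\ell|\gtrsim n$) and $\{0<|n-\ell|<n/2\}$ (where one instead uses $\brak{n}\approx\brak{\ell}$ together with the decay in $n$ of $g_{n,k}$, giving the $\brak{\ell}^{-\beta+1}$ factor and hence the $\min(1,\beta-1)$ in the conclusion) is the correct decomposition.

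Third, and most importantly, the statement "the dominant terms are $n=\ell$ and $n=\ell+1$" is an unverified assumption on which all of the above rests. Establishing it requires quantitative control of the Bessel sequence $I_n(a)$: the estimates \eqref{bd:In1}--\eqref{bd:In2} and Lemma \ref{I0I1}. The paper does this by splitting into a low-frequency regime $a\le a_0$ (Lemma \ref{small}, where dominance holds only for $\ell\ge n_0(a_0)$ and the finitely many small $\ell$ and small $k_\perp$ are handled trivially) and a high-frequency regime $a>a_0$ (Lemma \ref{big}, where $I_0(a)+I_1(a)\lesssim e^a/\sqrt a$ gives dominance for \emph{every} $\ell$). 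Your proposal skips this case analysis; without it, the quantities $|\ell-b_\ell|$ and $|\partial_y L(ib_\ell\omega_c,k)|$ that the whole argument turns on are simply uncontrolled.
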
 

Assuming \eqref{res:for} is true, we prove the estimate of $r_{\ell,k}$ in \eqref{est:res} in the following two lemmas where we deal with the cases $a$ relatively small and relatively large separately
(recall the definition of $a$ in \eqref{def:a}). 
 
\begin{lemma}[Low frequency]
	\label{small}
	Let $a_0>0$ be any fixed number, then the estimate \eqref{est:res} holds for $a\le a_0$ (with implicit constant depending on $a_0$).
\end{lemma}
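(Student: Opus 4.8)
The plan is to bound $r_{\pm\ell,k}$ by controlling the two factors in \eqref{res:for} separately in the regime $a \le a_0$: the numerator $\sum_{n\in\Integers} g_{n,k}/(in\omega_c - z)$ evaluated at $z = \pm i\omega_c b_\ell$, and the reciprocal $1/\partial_z L(z,k)$ at the same points. For the numerator, since $b_\ell \in (\ell, \ell+1)$, the denominator $in\omega_c - z = i\omega_c(n \mp b_\ell)$ satisfies $|n \mp b_\ell| \gtrsim |n - \ell|$ for $n \ne \ell, \ell+1$ (say), and $|n \mp b_\ell| \gtrsim \min(b_\ell - \ell, \ell+1-b_\ell)$ for $n \in \{\ell,\ell+1\}$; the latter distance is bounded below by the gap estimate that follows from $\partial_y L < 0$ together with the known size of the residue of $L$ at the cyclotron harmonics (this is where the factor $I_\ell(a)$ enters, and it stays away from zero for $a$ in a fixed compact set away from... actually $I_\ell(a) > 0$ for all $a > 0$ and $\ell$, but decays in $\ell$). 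So I would first establish a quantitative lower bound $|b_\ell - \ell| \gtrsim c(a_0,\ell)$ and then split the sum over $n$ accordingly, using Cauchy--Schwarz against the weighted $\ell^2$ bound \eqref{ineq:gnkest} on $g_{n,k}$ with exponents $\alpha,\beta$.

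For the derivative factor, the key point is that $-\partial_z L$ does not vanish at a Bernstein mode: from the displayed formula $\partial_z L(iy,k) = A_k\sum_n \frac{4n y}{a} I_n(a)\frac{n\omega_c}{((n\omega_c)^2 - y^2)^2}$ one sees (for $y = \omega_c b_\ell > 0$) that every term has the same sign, and the $n = \ell$ and $n=\ell+1$ terms, whose denominators are smallest, give a lower bound $|\partial_z L(i\omega_c b_\ell, k)| \gtrsim A_k \frac{\ell}{a} I_\ell(a)/(\text{gap})^2$. Combined with the gap lower bound this yields $|1/\partial_z L| \lesssim (\text{gap})^2 a/(A_k \ell I_\ell(a))$, and one checks the powers of $A_k$, $a$, $\ell$, $I_\ell(a)$ combine—using $A_k \sim \brak{k}^{-2}|k_\perp|^2 \omega_c^{-1}e^{-a}$ and $a \sim |k_\perp|^2$—to give the desired $\brak{k}^{-\alpha}\brak{\ell}^{-\min(1,\beta-1)}$ after absorbing everything into the $h_{in}$ norm via \eqref{ineq:gnkest}. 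Since $a \le a_0$, all quantities depending on $a$ alone (like $I_n(a)$, $e^{-a}$, $a^{-1}$ except near $a = 0$, which needs the $|k_\perp|^2$ in the numerator of $A_k$ and $a$ to cancel) are bounded by constants depending only on $a_0$, so the $k$-dependence is entirely through $\brak{k}$ and the $h_{in}$ norm.

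The step I expect to be the main obstacle is getting the spectral gap lower bound $|b_\ell - \ell|$ (and $|b_\ell - (\ell+1)|$) sufficiently precise, with the right dependence on $\ell$, because this gap controls simultaneously the size of the small-denominator term in the numerator sum and the lower bound on $|\partial_z L|$, and these two effects partly compensate. Concretely, near $y = \ell\omega_c$ one has $L(iy,k) \approx A_k \frac{2\ell}{a} I_\ell(a)\frac{\ell\omega_c}{(\ell\omega_c)^2 - y^2} + (\text{smooth})$, so solving $L = 1$ gives $b_\ell - \ell \sim A_k \frac{\ell}{a} I_\ell(a)$ up to the smooth correction; one must argue the smooth part doesn't destroy this, which for $a \le a_0$ should follow from monotonicity and boundedness of the remaining terms uniformly in $\ell$. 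Once the gap asymptotics are pinned down, the rest is bookkeeping: substitute into \eqref{res:for}, apply Cauchy--Schwarz with \eqref{ineq:gnkest}, and verify the exponent arithmetic $\alpha + \beta - 1/2 \le \sigma$, $\alpha + 1 < m$, $\gamma = \min(1,\beta-1)$ closes.
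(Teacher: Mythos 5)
Your proposal captures the paper's mechanism and is essentially the same approach: isolate the $n=\ell$ term in $L$ and $\partial_z L$, show it dominates for $\ell$ large by controlling the tail using Bessel-function asymptotics, use this to bound the gap $|b_\ell - \ell|$ and to lower-bound $|\partial_z L|$, then substitute into \eqref{res:for} and sum against \eqref{ineq:gnkest}. The technical content you sketch (decomposing the tail of the sum, showing the ``smooth'' part is bounded uniformly in $\ell$ for fixed $a\le a_0$) is exactly what the paper does with its $S_1,S_2,S_3$ decomposition and the estimates \eqref{bd:In1}--\eqref{bd:In2}.

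One conceptual point worth correcting: you identify the \emph{lower} bound $|b_\ell - \ell| \gtrsim c(a_0,\ell)$ as the main obstacle, and say the gap and the derivative ``partly compensate.'' In fact only an \emph{upper} bound on the gap is needed, and the compensation is complete, not partial. The paper's chain is: the dominance of the $\ell$-th term of $L$ gives \eqref{lterm}, which simultaneously yields the upper bound \eqref{pole:dis} on $|b_\ell-\ell|$ and, via the $n=\ell$ term of $\partial_y L$, the lower bound \eqref{der:L}: $|\partial_y L(i\omega_c b_\ell)| \gtrsim b_\ell/|\ell^2-b_\ell^2|$. Then in the dangerous term $R_3 = |g_{\ell,k}|/(|\partial_y L|\,|\ell - b_\ell|)$, the factors of $|\ell - b_\ell|$ cancel exactly: $R_3 \lesssim |g_{\ell,k}|\,(\ell+b_\ell)/b_\ell \lesssim |g_{\ell,k}|$, with no dependence on how small the gap is. The upper bound on the gap is then what makes $R_1, R_2$ (which scale like $|\ell - b_\ell|$) negligible. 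So the extra effort you anticipate spending on a quantitative lower bound for the gap is unnecessary, and the argument closes once the upper bound and \eqref{der:L} are in hand.
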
 

\begin{proof} 
	Let $a_0\in\mathbb R^+$. Without loss of generality,  we assume that there exists at least one $k_{\perp}$ such that $0<a=2 |k_{\perp}|^2/\omega_c^2<a_0$ since otherwise the result is automatically true. Note that for any $a>0$ and $n\ge a$, one has
	\begin{align}
	\label{}
	\frac{1}{m!(m+n)!}\left(\frac{a}{2}\right)^{2m+n}
	= \left(\frac{a}{2}\right)^n\frac{1}{n!}\frac{(a/2)^{2m}}{(n+1)\cdots(n+m)m!} \le \left(\frac{a}{2}\right)^n\frac{1}{n!}\frac{1}{2^m}\frac{(a/2)^m}{m!}.
	\end{align}
	Recalling definition \eqref{fun:Bes}, summing up over $m$, and appealing to Stirling's formula, we arrive at
	\begin{equation}
	\label{bd:In1}
	I_n(a) \le \left(\frac{a}{2}\right)^n\frac{2}{n!}\exp{\left(\frac{a}{2}\right)}
	\end{equation}
	and
	\begin{equation}
	\label{bd:In2}
	I_n(a) \lesssim\left(\frac{\text{e}a}{2n}\right)^n\exp{\left(\frac{a}{2}\right)} \ \ \ \ \text{for}\ n\ \text{large\ and\ each\ } a>0.
	\end{equation}
	Recall \eqref{Lap:K} and that on the imaginary axis $z=iy$ ($y \in \Real$), $L(iy, k)$ is an even function of $y$. Therefore, it is sufficient to consider the case $y>0$. Setting $y=\omega_c b$, we obtain
	\begin{equation}
	\label{for:L}
	L(i\omega_cb,k) = -A_k\sum_{n=1}^\infty \frac{2n}{a}\frac{1}{\omega_c}I_n(a) \frac{n}{n^2-b^2}.
	\end{equation}
	Let  $b_{\ell}\in(\ell-1/2, \ell+1/2)$ be a Bernstein mode (note that this is not necessarily the same enumeration that we used above). We want to show that there exists $n_0\in\mathbb{N}$ sufficiently large such that for $\ell \geq n_0$, the $\ell$-th term in \eqref{for:L} is $\gtrsim 1$. We decompose the sum of all other terms in three pieces: 
	\begin{align}
	A_k \sum_{\substack{n\neq \ell\\n\ge1}}\frac{2n}{a}\frac{1}{\omega_c}I_n(a) \frac{n}{n^2-b_{\ell}^2}
	&=	A_k\left(\sum_{1\le n\le a}+\sum_{a<n\le N(a)}+\sum_{\substack{N(a)<n\\ n\neq \ell}}\right)\frac{2n}{a}\frac{1}{\omega_c}I_n(a) \frac{n}{n^2-b_{\ell}^2}
	\nonumber\\&
	=S_1+S_2+S_3, \label{def:SiLow}
	\end{align}
	where $N(a)>a$ is a sufficiently large number to be determined below. From \eqref{e:a}, it follows that 
	\begin{align}
	\label{}
	|S_1| \le A_k\frac{2}{\omega_c}\frac{a}{b_{\ell}(b_{\ell}-a)}\sum_{n\le a}I_n(a) \le  A_k\frac{2}{\omega_c}\frac{a}{b_{\ell}(b_{\ell}-a)}\exp{\left(a\right)}.
	\end{align}
	In light of \eqref{Ak}, we further obtain
	\begin{equation}
	\label{bd:s1}
	|S_1| \lesssim \frac{a}{b_{\ell}(b_{\ell}-a)}.
	\end{equation}
	Using \eqref{bd:In1}, we bound the second term in \eqref{def:SiLow} as
	\begin{align}
	\label{bd:s2}
	|S_2| &\le
	A_k\frac{N(a)}{b_{\ell}^2-N(a)^2}\frac{1}{\omega_c}\sum_{a<n\le N(a)}\frac{2n}{a}\left(\frac{a}{2}\right)^n\frac{2}{n!}\exp{\left(\frac{a}{2}\right)}
	\lesssim 	\frac{aN(a)}{b_{\ell}^2-N(a)^2}.
	\end{align}
	For $S_3$, we employ \eqref{bd:In2} to obtain
	\begin{align}
	\label{bd:s3}
	|S_3| & \lesssim
	A_k\sum_{\substack{N(a)<n\\ n\neq \ell}} \frac{n}{n^2-b_{\ell}^2}\left(\frac{\text{e}a}{2n}\right)^{n-1}\exp{\left(\frac{a}{2}\right)}
	\lesssim
	A_k\sum_{\substack{N(a)<n\\ n\neq \ell}} \left(\frac{\text{e}a}{2n}\right)^{n-1}\exp{\left(\frac{a}{2}\right)}
	\nonumber\\&
	\lesssim
	\left(\frac{\text{e}a}{2N(a)+2}\right)^{N(a)}\exp{\left(-\frac{a}{2}\right)}. 
	\end{align}
	We first choose $N(a)\in\mathbb{N}$ large enough such that $|S_3|\le 1/4$. Then we choose $n_0\ge N(a)$ sufficiently large to ensure 
	$|S_1|, |S_2|\le 1/4$ for $\ell\ge n_0$. It follows that (using that $L(i \omega_c b_\ell,k) = 1$)
	\begin{equation}
	\label{lterm}
	A_k\frac{2\ell}{a}\frac{1}{\omega_c}I_{\ell}(a) \frac{\ell}{|\ell^2-b_{\ell}^2|} \ge \frac{1}{4},
	\end{equation}
	which implies
	\begin{equation}
	\label{pole:dis}
	0<\ell-b_{\ell}\le 5 A_k\frac{\ell}{a}\frac{1}{\omega_c}I_{\ell}(a)
	\lesssim \left(\frac{\text{e}a}{2\ell}\right)^{\ell-1}\exp{\left(-\frac{a}{2}\right)}.
	\end{equation}
	As we will see below, this will imply a large lower bound on the $\partial_yL(iy,k)$, which is the key to prove that the sequence of residues is summable. 
	By \eqref{res:for}, we obtain
	\begin{align}
	\label{res:Rbd}
	&|r_{\ell,k}|=\left|\text{Res}_{ib_{\ell}\omega_c}\frac{1}{1-L}\left(\sum_{n \in \Integers} g_{n,k} \frac{1}{in\omega_c-z} \right)\right| \lesssim 
	\left|\frac{1}{\partial_{y}L(ib_{\ell}\omega_c)}\left(\sum_{n \in \Integers} g_{n,k} \frac{1}{n-b_{\ell}} \right)\right|
	\nonumber\\&
	\qquad
	\lesssim \left|\frac{1}{\partial_{y}L(ib_{\ell}\omega_c)}\left(\sum_{|n-\ell|\ge n/2} g_{n,k} \frac{1}{n-b_{\ell}} \right)\right|
	+
	\left|\frac{1}{\partial_{y}L(ib_{\ell}\omega_c)}\left(\sum_{0<|n-\ell|< n/2} g_{n,k} \frac{1}{n-b_{\ell}} \right)\right|
	\nonumber\\&
	\qquad\quad
	+
	\left|\frac{1}{\partial_{y}L(ib_{\ell}\omega_c)}\left(g_{\ell,k} \frac{1}{\ell-b_{\ell}} \right)\right| 
	\nonumber \\ 
	& \qquad =:R_1+R_2+R_3.
	\end{align}
        We first deal with $R_1$. Note that by \eqref{ineq:gnkest} we have (using $\sigma>5/2$, $m>2$)
	\begin{equation}
	\label{}
	\left|\sum_{|n-\ell|\ge n/2} g_{n,k} \frac{1}{n-b_{\ell}}\right| \lesssim \frac{\Vert h_{in}\Vert_{H^{\sigma}_m}}{\ell}
	\end{equation}
	for $\ell>0$. A direct calculation gives a lower bound of the derivative of $L$:
	\begin{align}
	\label{}
	|\partial_yL(i\omega_c b_\ell )|
	\ge A_k\frac{2\ell}{a}\frac{1}{\omega_c^2}I_{\ell}(a) \frac{2\ell b_{\ell}}{(\ell^2-b_{\ell}^2)^2}
	\end{align}
	for any $\ell\in\mathbb N$. Using \eqref{lterm}, we arrive at
	  \begin{align}
	  \label{der:L}
	  |\partial_yL(i \omega_c b_\ell)| \gtrsim \frac{b_{\ell}}{|\ell^2-b_{\ell}^2|}
  	  \end{align}
	for $\ell\ge n_0$.
	Hence by \eqref{pole:dis} and \eqref{der:L} we deduce that
	\begin{equation}
	\label{}
	R_1 \lesssim \frac{1}{\ell}\left(\frac{1}{\ell}\right)^{\ell-1} \Vert h_{in}\Vert_{H^{\sigma}_m}
	=\left(\frac{1}{\ell}\right)^{\ell}\Vert h_{in}\Vert_{H^{\sigma}_m}
	\end{equation}
	for $\ell\ge n_0$.
	Similarly we have 
	\begin{equation}
	\label{}
	R_2 \lesssim \left(\frac{1}{\ell}\right)^{\ell-1}\Vert h_{in}\Vert_{H^{\sigma}_m}
	\end{equation}
	for $\ell\ge n_0$.
	Again by \eqref{pole:dis} and \eqref{der:L}, $R_3$ is bounded as
	\begin{equation}
	\label{}
	R_3 \lesssim \abs{g_{\ell,k}}.
	\end{equation}
	The lemma is proved by inserting the above estimate into \eqref{res:Rbd}.
%	\fei{move this part to the contour integral part}
%	Using the residue theorem, we arrive at
%	\begin{align}
%	\label{}
%	|\rho(t,k)| \le C\sum_\ell \left|\text{Res}_{ib_{\ell}\omega_c}\frac{\text{e}^{zt}}{1-L}\left(\sum_{n \in \Integers} g_{n,k} \frac{1}{in\omega_c-z} \right)\right|
%	\le C
%	\end{align}
%	for $a< a_0$, where we also have used \eqref{ineq:gnkest}. 
\end{proof}

\begin{lemma}[High frequency]
	\label{big}
	There exists an $a_0>0$ sufficiently large such that the bound \eqref{est:res}  is true for $a> a_0$.
\end{lemma}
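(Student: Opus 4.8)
The scheme is the same as in Lemma~\ref{small}: from the residue formula \eqref{res:for} it suffices to bound, for each Bernstein mode $b_\ell\in(\ell,\ell+1)$,
\[
|r_{\pm\ell,k}|\;=\;\frac{1}{\omega_c\,|\partial_y L(i\omega_c b_\ell,k)|}\;\Bigl|\sum_{n\in\Integers}\frac{g_{n,k}}{n-b_\ell}\Bigr|
\]
(the two sign choices being interchangeable since $L(iy,k)$ is even in $y$), so what I need is a lower bound for $|\partial_y L(i\omega_c b_\ell,k)|$ together with control of the gap $\delta_\ell:=b_\ell-\ell$ to the nearest cyclotron harmonic. The difference from the low-frequency regime is that for $a\gg1$ the weights $I_n(a)$ are not uniformly small---they form a broad profile peaked near $n\sim\sqrt{2a}$---so \eqref{bd:In2} will be used only in the far tail $n\gg a$.

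First I would establish a mass identity: the recurrence $\tfrac{2n}{a}I_n(a)=I_{n-1}(a)-I_{n+1}(a)$ together with $I_0(a)+2\sum_{n\ge1}I_n(a)=e^a$ (both used already in deriving \eqref{Lap:K}, with absolute convergence from \eqref{1st:mom}) gives, after reindexing, $\sum_{n\ge1}\tfrac{2n^2}{a}I_n(a)=e^a$. In view of \eqref{Ak} and $|k|=|k_\perp|$ (as $k_z=0$), the positive coefficients $c_n:=\tfrac{2A_k n^2}{a\omega_c}I_n(a)$, in terms of which
\[
L(i\omega_c b,k)=-\sum_{n\ge1}\frac{c_n}{n^2-b^2},\qquad |\partial_y L(i\omega_c b,k)|=\frac{2b}{\omega_c}\sum_{n\ge1}\frac{c_n}{(n^2-b^2)^2},
\]
have total mass $\sum_n c_n=\kappa(\omega_c)$ independent of $k$ and $a$, so in particular $c_n\le\kappa$ for all $n$. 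Rewriting $L(i\omega_c b_\ell,k)=1$ as $\sum_{n\le\ell}\tfrac{c_n}{b_\ell^2-n^2}=1+\sum_{n>\ell}\tfrac{c_n}{n^2-b_\ell^2}\ge1$ and bounding the left side by $\tfrac{\kappa}{b_\ell^2-\ell^2}$ gives $0<b_\ell^2-\ell^2\le\kappa$; a Cauchy--Schwarz step against $\sum_{n\le\ell}c_n\le\kappa$ gives the unconditional bound $|\partial_y L(i\omega_c b_\ell,k)|\gtrsim\brak\ell/\omega_c$; and keeping only the $n=\ell$ term gives $|\partial_y L(i\omega_c b_\ell,k)|\gtrsim b_\ell c_\ell/(\omega_c(b_\ell^2-\ell^2)^2)$.

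The crux is then the matching comparison $b_\ell^2-\ell^2\asymp c_\ell$, i.e.\ $\delta_\ell\asymp c_\ell/\brak\ell$, valid once $a>a_0$. For $\ell\ge\ell_0(\omega_c)$ it is elementary: $b_\ell^2-\ell^2\le\kappa$ forces $\sum_{n<\ell}\tfrac{c_n}{b_\ell^2-n^2}\le\tfrac{\kappa}{2\ell-1}$ and $\sum_{n>\ell}\tfrac{c_n}{n^2-b_\ell^2}\le\tfrac{\kappa}{\ell}$, so the $n=\ell$ term carries weight $1+O(1/\ell)$ in $L(i\omega_c b_\ell,k)=1$. For the finitely many $\ell<\ell_0$ one invokes that $a_0$ is large: since $I_n(a)/e^a\to0$ for fixed $n$, every $c_n\to0$ as $a\to\infty$, and excluding $b_\ell\to\ell+1$ (which would make $L$ nonpositive or unbounded there) again forces the $\ell$-th term to carry all the weight. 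Substituting $\delta_\ell\asymp c_\ell/\brak\ell$ and $c_\ell\le\kappa$ into the two derivative bounds gives $\tfrac{1}{\omega_c|\partial_y L(i\omega_c b_\ell,k)|}\lesssim\brak\ell^{-1}$ and $\tfrac{1}{\omega_c|\partial_y L(i\omega_c b_\ell,k)|}\cdot\delta_\ell^{-1}\lesssim1$; splitting $\sum_n$ into $n=\ell$ and $n\ne\ell$ (using $|n-b_\ell|\ge\tfrac12|n-\ell|$, valid since $\delta_\ell<\tfrac12$) yields $|r_{\pm\ell,k}|\lesssim|g_{\ell,k}|+\brak\ell^{-1}\sum_{n\ne\ell}|g_{n,k}|/|n-\ell|$, and \eqref{est:res} follows from \eqref{ineq:gnkest} by applying it with the $(n,k)$-exponent pair $(\gamma,\alpha)$ for the first term and, after Cauchy--Schwarz in $n$, the pair $(1,\alpha)$ for the second (the admissibility inequalities $\gamma+\alpha-\tfrac12\le\sigma$, $\alpha+\tfrac12\le\sigma$, $\gamma+1<m$, $2<m$ all hold under $\alpha+\beta-\tfrac12\le\sigma$, $m>2$, $\gamma=\min(1,\beta-1)$).

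I expect the real obstacle to be the lower half $b_\ell^2-\ell^2\gtrsim c_\ell$ of the comparison: the upper bound is immediate from $\sum_nc_n=\kappa$, but showing that the $\ell$-th term genuinely dominates the dispersion relation uniformly in $\ell$ blends an elementary comparison (good for $\ell$ away from the bottom of the spectrum) with a largeness-of-$a_0$ argument for the bounded range of $\ell$, and must be handled carefully near the peak $\ell\sim\sqrt{2a}$ where many neighbouring $c_n$ are comparable. The subsequent reconciliation of Sobolev exponents so as to land exactly in the admissible $(\alpha,\beta)$ range claimed in \eqref{est:res}, especially at the endpoint $\beta\to1$, is routine but requires some bookkeeping.
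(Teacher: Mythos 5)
Your proof is correct, and it reaches the same key comparison $\delta_\ell := b_\ell - \ell \asymp c_\ell/\brak{\ell}$ (with $c_\ell = \tfrac{2A_k\ell^2}{a\omega_c}I_\ell(a)$) that drives the residue bound, but by a genuinely different route. The paper works with $\sum_n c_n/n = \tfrac{A_k}{\omega_c}(I_0(a)+I_1(a))$ and invokes Lemma~\ref{I0I1} to make this $\lesssim a^{-1/2}$; combined with $|b_\ell - n|\ge 1/2$ this makes the off-diagonal sum $\sum_{n\ne\ell} c_n/|n^2-b_\ell^2|\le 1/5$ \emph{uniformly in $\ell$} once $a>a_0$, so the diagonal term of the dispersion relation is pinned to $[4/5,6/5]$ in one shot. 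You instead exploit the total mass identity $\sum_n c_n = \kappa(\omega_c)$ (which follows from $\sum_{n\ge1}\tfrac{2n^2}{a}I_n(a)=e^a$, itself a telescoping consequence of \eqref{Besid1} and \eqref{e:a} — a nice structural observation that does not appear in the paper). From this you get the unconditional bounds $\delta_\ell\lesssim\kappa/\ell$ and, via Cauchy--Schwarz, $|\partial_y L|\gtrsim\brak{\ell}/\omega_c$, both valid for all $a$; the comparison $\delta_\ell\asymp c_\ell/\brak{\ell}$ is then elementary for $\ell\gtrsim\kappa$ with \emph{no} largeness of $a$ needed, and only the finitely many $\ell\lesssim\kappa$ require $a_0$ large. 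So the two proofs partition the difficulty differently: the paper pays once (a $1/\sqrt a$ Bessel bound, valid for every $\ell$), you pay only for bounded $\ell$ but must then handle that range by a separate compactness-in-$a$ argument. Both close, and your two-way split of the residue sum ($n=\ell$ vs.\ $n\ne\ell$, plus one Cauchy--Schwarz) is a streamlined alternative to the paper's three-way $R_1,R_2,R_3$ split.

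One step is stated imprecisely: in the bounded-$\ell$ case, ``excluding $b_\ell\to\ell+1$ (which would make $L$ nonpositive or unbounded there)'' is not the right reason --- $L$ is simply equal to $1$ at $b_\ell$ wherever $b_\ell$ lies. The correct argument is monotonicity of $L$ on $(\ell\omega_c,(\ell+1)\omega_c)$ together with the observation that for fixed $\ell$, $L(i\omega_c(\ell+\tfrac12),k)\to 0 < 1$ as $a\to\infty$ (dominated convergence: the nearby $c_n\to0$ and the tail is $O(\kappa/\ell)$), which forces $b_\ell<\ell+\tfrac12$ for $a$ large, hence $(\ell+1)^2-b_\ell^2\gtrsim\ell$ and the lower half of $\delta_\ell\asymp c_\ell/\brak{\ell}$ follows. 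This is exactly the gap you flagged as ``the real obstacle,'' and it does close, but you should spell it out rather than wave at a qualitative property of $L$.
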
 
\begin{proof}
	In light of Lemma~\ref{I0I1}, we may choose $a_0$ sufficiently large such that 
	\begin{equation}
	\label{bd:01}
	I_{0}(a) + I_{1}(a) \le  \frac{m\omega_c^2\text{e}^{a}}{10q}.
	\end{equation}
	for all $a\ge a_0$.
	According to the definition of Bernstein modes $b_\ell$, we know that 
	\begin{equation}
	\label{}
	L(ib_\ell\omega_c, k) = 1
	\end{equation} 
	for each $\ell\in\mathbb{N}$.
	By the choice of the constant $a_0$, \eqref{1st:mom},  and the fact
	\begin{equation}
	\label{}
	|b_\ell-n|\ge \frac{1}{2}\ \ \ \ \text{for}\ \ell\neq n,
	\end{equation}
	using \eqref{bd:01}, it follows that
	\begin{align}
	\label{}
	A_k\sum_{\substack{n=1\\ n\neq \ell}}^\infty \left|\frac{2n}{a}\frac{1}{\omega_c}I_n(a) \frac{n}{n^2-b_\ell^2}\right|
	\le \frac{2}{\omega_c}A_k(I_{0}(a)+I_{1}(a))
	\le 
	\frac{1}{5}
	\end{align} 
	for $a\ge a_0$, i.e., $k^2\ge a_0\omega_c^2/2$. 
By choosing $a_0$ large we ensure (using $L(i\omega_c b_\ell,k) = 1$)
 	\begin{equation}
	\label{}
	\frac{6}{5}\ge A_k \left|\frac{2\ell}{a}\frac{1}{\omega_c}I_\ell(a) \frac{\ell}{\ell^2-b_\ell^2}\right|
	\ge \frac{4}{5} ,
	\end{equation}
	for  
	$k^2\ge a_0\omega_c^2/2$ and $\ell\in\mathbb{N}$, which implies
	\begin{align}
	\label{}
	A_k \frac{\ell}{a}\frac{1}{\omega_c}I_\ell(a)\lesssim |b_\ell-\ell|\lesssim A_k \frac{\ell}{a}\frac{1}{\omega_c}I_\ell(a).
	\end{align} 
	Note that the bound \eqref{res:Rbd} is still valid in this case, hence define $R_j$ as therein. 
By \eqref{der:L} and \eqref{ineq:gnkest}, direct calculation gives (with $\alpha$ as in the statement of Lemma \eqref{est:res})
	\begin{align} 
	\label{}
	R_1 =
	\left|\frac{1}{\partial_{y}L(ib_{\ell}\omega_c)}\left(\sum_{|n-\ell|\ge n/2} g_{n,k} \frac{1}{n-b_{\ell}} \right)\right|
	 \lesssim \brak{k}^{-\aa}A_k \frac{1}{a}\frac{1}{\omega_c}I_\ell(a)\Vert h_{in}\Vert_{H^{\sigma}_m}.
	\end{align}
	Note that $A_k \frac{\ell}{a}I_\ell(a) \lesssim 1$. The above estimate implies
	  \begin{align}
	  R_1 \lesssim \brak{k}^{-\aa}\brak{\ell}^{-1}\Vert h_{in}\Vert_{H^{\sigma}_m}.
	  \label{}
	  \end{align}
	Next we estimate $R_2$ as
	\begin{align}
	\label{}
	R_2
	&=
	\left|\frac{1}{\partial_{y}L(ib_{\ell}\omega_c)}\left(\sum_{0<|n-\ell|< n/2} g_{n,k} \frac{1}{n-b_{\ell}} \right)\right|
	\lesssim\brak{k}^{-\aa}\brak{\ell}^{-\beta+1}A_k \frac{\ell}{a}\frac{1}{\omega_c}I_\ell(a)\Vert h_{in}\Vert_{H^{\sigma}_m}
	\nonumber\\&
	\lesssim \brak{k}^{-\aa}\brak{\ell}^{-\beta+1} \Vert h_{in}\Vert_{H^{\sigma}_m}.
	\end{align}
	Note that for $R_2$ to be summable in $\ell$, we only need $\bb > 1$. As in the $a<a_0$ case, we bound $R_3$ as
	\begin{align}
	\label{}
	R_3
	=
	\left|\frac{1}{\partial_{y}L(ib_{\ell}\omega_c)}\left(g_{\ell,k} \frac{1}{\ell-b_{\ell}} \right)\right| 
	\lesssim g_{\ell,k}\Vert h_{in}\Vert_{H^{\sigma}_m}.
	\end{align} 
	This implies the residue at $b_\ell$ is bounded by
	\begin{align}
	\label{}
	\left|r_{\ell,k}\right| \le 
	R_1+R_2+R_3
	\lesssim
	\brak{k}^{-\aa}\ell^{-\gamma}\Vert h_{in}\Vert_{H^{\sigma}_m}
	\end{align}
	where $\gamma = \min{(1, \bb-1)}$, completing the proof.
%	\fei{move this part to contour integral}By the above estimates, summing up the residues in $\ell$, and using the definition of $A_k$ with \eqref{e:a}, we arrive at
%	\begin{align}
%	\label{}
%	|\rho(t,k)| \le C\sum_\ell \left|\text{Res}_{ib_{\ell}\omega_c}\frac{\text{e}^{zt}}{1-L}\left(\sum_{n \in \Integers} g_{n,k} \frac{1}{in\omega_c-z} \right)\right|
%	\le C\brak{k}^{-\aa}
%	\end{align}
%	provided $\bb\ge1$, where the constant $C$ is independent of $k$.
\end{proof}

\begin{comment}
\begin{remark}
	\label{}
	In fact for the estimate of the residue we could get
	\begin{equation}
	\label{}
	\left|\text{Res}_{ib_{\ell}\omega_c}\frac{\text{e}^{zt}}{1-L}\left(\sum_{n \in \Integers} g_{n,k} \frac{1}{in\omega_c-z} \right)\right| \le  
	C\brak{k}^{-r}\ell^{-s}
	\end{equation}
	where $r$ and $s$ satisfy $2(s-\gamma)=\aa-r$ and $s-\gamma\in\mathbb{N}$.
\end{remark}
\end{comment}

\begin{proof}[Proof for Lemma~\ref{Residues}]
Lemma \ref{Residues} follows immediately from Lemmas \ref{small} and \ref{big} by choosing $a_0$ sufficiently large. Note in particular that the constants do not depend on $k$ or $\ell$. 
\end{proof}

Next, we compute the inverse Laplace transform of $\cL [\hat\rho]$ and complete the proof of \eqref{eq:BernExp}. 
The last step is to verify that the only contributions come from the poles at the Bernstein modes. This comes down to verifying certain decay conditions together with summability of the residues. 
\begin{lemma}
	For all $t > 0$, if $\sigma>5/2$ and $m>2$, the following formula holds and the sum is absolutely convergent 
	\begin{align}
    \hat{\rho}(t,k) & = \sum_{\ell = 0}^\infty r_{\ell,k} e^{i b_{\ell} \omega_c t} + r_{-\ell,k} e^{-i b_{\ell} \omega_c t}. 
	%\rho(t,k)=\cL^{-1}[\cL [\hat\rho]]=\sum_{\ell\in\mathbb{Z}_*} \textup{Res}_{ib_{\ell}\omega_c}\frac{\text{e}^{zt}}{1-L}\left(\sum_{n \in     
%		\Integers} g_{n,k} \frac{1}{in\omega_c-z} \right)
	\end{align} 
    and the coefficients $r_{\ell,k}$ satisfy the estimates \eqref{ineq:ResEst}. 
\end{lemma}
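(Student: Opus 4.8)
The plan is to invert the Laplace transform via the Bromwich integral, push the contour leftward past the imaginary axis while collecting the residues at the Bernstein modes, and show that the remaining contribution vanishes. Throughout fix $k$ with $k_z=0$ and write $b_\ell=b_{\ell,k}$. Recall from Lemma~\ref{lem:poles} that $\cL[\hat\rho](z,k)=\cL[\hat\rho_0](z,k)/(1-L(z,k))$ continues from $\{\Re z>C\}$ to a meromorphic function on $\mathbb C$ whose only singularities are simple poles at $\pm i\omega_c b_\ell$, $\ell\in\mathbb N$, with residue $r_{\pm\ell,k}$ there by \eqref{res:for}; recall also (Lemma~\ref{lem:passive}, the $k_z=0$ case) that $\hat\rho_0(t,k)=\sum_n g_{n,k}e^{in\omega_c t}$ is $2\pi/\omega_c$-periodic, with $\cL[\hat\rho_0](z,k)=\sum_n g_{n,k}/(in\omega_c-z)$ a meromorphic function whose poles at the cyclotron harmonics are exactly cancelled in the quotient by the resonances of $L$. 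Starting from $\hat\rho(t,k)=\frac1{2\pi i}\int_{\gamma-i\infty}^{\gamma+i\infty}e^{tz}\cL[\hat\rho](z,k)\,dz$ for $\gamma>C$, I would integrate $e^{tz}\cL[\hat\rho](z,k)$ around the rectangle $\{-\epsilon\le\Re z\le\gamma,\ |\Im z|\le T_j\}$, with $\epsilon>0$ fixed and $T_j\to\infty$ chosen to stay bounded away from the cyclotron harmonics and Bernstein modes. By the residue theorem, using that the cyclotron-harmonic singularities are removable (Lemma~\ref{lem:poles}), this equals the sum of residues at the enclosed Bernstein poles; letting $T_j\to\infty$ one obtains
\[
\hat\rho(t,k)=\sum_{\ell=0}^\infty\Big(r_{\ell,k}e^{ib_\ell\omega_c t}+r_{-\ell,k}e^{-ib_\ell\omega_c t}\Big)+\frac1{2\pi i}\int_{-\epsilon-i\infty}^{-\epsilon+i\infty}e^{tz}\cL[\hat\rho](z,k)\,dz .
\]
For this I need: (i) the horizontal segments of the rectangle tend to $0$, which follows from $\cL[\hat\rho](x+iT_j,k)\to0$ uniformly in $x\in[-\epsilon,\gamma]$ --- itself a consequence of the summability of $\{g_{n,k}\}$ in $\cL[\hat\rho_0]$ and the decay of $L(z,k)=\cL[K](z,k)$ in the imaginary direction coming from \eqref{Lap:K}; and (ii) absolute convergence of the residue series, which follows from the pointwise bound \eqref{est:res} of Lemma~\ref{Residues} together with the weighted $\ell^2$-bound \eqref{ineq:gnkest} and Cauchy--Schwarz in $\ell$ (this is where $\sigma>5/2$ and $m>2$ are used).

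The remaining step is to show the last integral is zero. Its value $\psi(t):=\hat\rho(t,k)-\sum_\ell(\cdots)$ is independent of $\epsilon$, since $\cL[\hat\rho](\cdot,k)$ is holomorphic on $\{\Re z<0\}$, so it suffices to let $\epsilon\to\infty$. Using periodicity of $\hat\rho_0$ one has $\cL[\hat\rho_0](z,k)=(1-e^{-2\pi z/\omega_c})^{-1}\int_0^{2\pi/\omega_c}e^{-zs}\hat\rho_0(s,k)\,ds$, from which a Watson's-lemma-type asymptotic (valid because $\hat\rho_0(\cdot,k)$ is continuous for $\sigma>5/2$) gives $\cL[\hat\rho_0](z,k)\to0$ as $\Re z\to-\infty$; since moreover $|L(z,k)|\to0$ uniformly on $\{\Re z=-\epsilon\}$ by \eqref{Lap:K}, also $\cL[\hat\rho](z,k)=\cL[\hat\rho_0](z,k)/(1-L(z,k))\to0$ uniformly on that line, and together with the $e^{-\epsilon t}$ factor this forces the integral over $\{\Re z=-\epsilon\}$ to $0$ as $\epsilon\to\infty$. (Equivalently, one may argue by Liouville: the poles of $\cL[\hat\rho]$ and those of the Laplace transform $\sum_\ell\big(r_{\ell,k}(z-ib_\ell\omega_c)^{-1}+r_{-\ell,k}(z+ib_\ell\omega_c)^{-1}\big)$ of the Bernstein series coincide with equal residues, so $\cL[\psi]$ extends to an entire function; the estimates above show $\cL[\psi]\to0$ at infinity in every direction, hence $\cL[\psi]\equiv0$, hence $\psi\equiv0$ by uniqueness of the Laplace transform.) This gives the claimed expansion, and the estimate \eqref{ineq:ResEst} is precisely \eqref{est:res}.

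I expect the vanishing of the left contour integral --- equivalently, the uniform decay of $\cL[\hat\rho](\cdot,k)$ at $\infty$ in every direction --- to be the main obstacle. It relies on the special periodic structure of the passive density $\hat\rho_0$ (so that $\cL[\hat\rho_0]$ genuinely decays, rather than merely stays bounded, as $\Re z\to-\infty$), on the fast decay of $L(z,k)$ --- where the Bessel-function bounds and the $\omega_c$-separation of the cyclotron harmonics enter --- and on the summability of the residue sequence from Lemma~\ref{Residues}; the hypotheses $\sigma>5/2$ and $m>2$ are used only through Lemmas~\ref{lem:passive} and \ref{Residues}.
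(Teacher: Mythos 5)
Your overall approach is the same as the paper's: invert via the Bromwich integral, push the contour to the left collecting the (absolutely summable, by Lemma~\ref{Residues}) residues at the Bernstein poles, and show the remaining contour contribution vanishes. The reduction to Lemma~\ref{Residues}, the use of removability at the cyclotron harmonics (Lemma~\ref{lem:poles}), and the control of the horizontal segments at $\Im z=\pm T_j$ via decay of $L$ and of the numerator series are all correct and match the paper's steps.

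The gap is in the last step. You first fix $\epsilon$ and send $T_j\to\infty$, arriving at a remainder integral over the full vertical line $\{\Re z=-\epsilon\}$, and then try to send $\epsilon\to\infty$. But $\cL[\hat\rho](-\epsilon+i\omega,k)$ decays only like $1/|{-\epsilon+i\omega}|$ (this is exactly what the Watson-type asymptotic produces), so the vertical line integral is not absolutely convergent; it is only a conditionally convergent (or $L^2$-theoretic) object, and the combination ``uniform pointwise smallness $O(1/\epsilon)$ plus the $e^{-\epsilon t}$ prefactor'' does not bound an improper integral over an infinite line. The paper avoids this by coupling the two truncations: the rectangle is $[-M,\gamma]\times[-([M]+\tfrac12)\omega_c,\,([M]+\tfrac12)\omega_c]$, so the left side has \emph{finite} length $\sim M$, the integrand is $O(1/M)$ there, and $|e^{zt}|=e^{-Mt}$, giving a bound $\lesssim e^{-Mt}\to 0$ for each fixed $t>0$; moreover, on the horizontal segments the factor $e^{\Re z\,t}$ is integrable over $\Re z\in(-\infty,0]$, which is what makes the $O(M^{-1}+M^{-\beta+1})$ sup estimate enough. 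Your Watson's-lemma observation for $\cL[\hat\rho_0]$ via the periodicity of $\hat\rho_0$ is a genuinely nice extra fact (not used in the paper), and could be turned into a correct argument: e.g.\ the asymptotic $\cL[\hat\rho_0](-\epsilon+i\omega)\sim -\hat\rho_0(P^-)/(\epsilon-i\omega)$ shows $\|\cL[\hat\rho](-\epsilon+i\cdot)\|_{L^2_\omega}\lesssim\epsilon^{-1/2}\to0$, so by Plancherel the vertical contribution tends to $0$ in $L^2_t$; together with the $\epsilon$-independence of $\psi$ and continuity of $\psi$, this would close the argument. But as written, the passage from pointwise uniform smallness to vanishing of the integral is a genuine gap. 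The alternative Liouville route you sketch has the same issue in disguise: you need boundedness of $\cL[\psi]$ on all of $\mathbb{C}$ (in particular along sectors hugging the imaginary axis, where the poles are merely removable, not absent from $\cL[\hat\rho]$ and from the residue series separately), which you have not established.
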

\begin{proof}
Recall that $\cL[\hat{\rho}](z,k)$ is given by the formula \eqref{eq:Lrho}. 
Note that $\cL[\hat{\rho}](\lambda + i\omega,k)$ is not $L^1_\omega$ for fixed $\lambda > 0$, however, we can make sense of the inverse transform via the Bromwich contour formula  \eqref{eq:invTrans} by extension to $L^2_\omega$.  
Next, we will deform the contour past the infinite number of poles representing the Bernstein modes via the contour represented in Figure \ref{fig}. 
Specifically, we will deform past the imaginary axis to  $\Re z = -M$ and vertically to a distance of $\Im z= \pm ([M]+1/2)\omega_c$, encircling each pole with a standard key-hole contour. See Figure \ref{fig} for a graphical depiction of the contour. 
  \begin{figure}[ht]
        \centering 
        \includegraphics[width= 0.70 \textwidth]{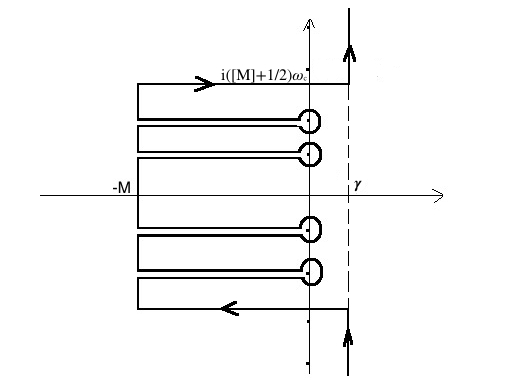}
        \caption{ Pictured above is the contour used to invert the Laplace transform. } \label{fig} 
\end{figure}
The goal is then to pass to the limit $M \rightarrow \infty$ and deduce that the only contribution is given by the residues at the poles. 
This is accomplished provided we prove suitable decay estimates on the contributions of the contour at $\Im z = \pm ([M]+1/2)\omega_c$, $\Re z = -M$, as well as using summability of the residues at the poles. 

    We now begin  by giving the necessary estimates. 
First, observe
	\begin{equation}
	\label{}
	|L(z,k)|=\left|A_k\sum_1^\infty \frac{n}{a}I_n(a)\left(\frac{1}{z-in\omega_c}-\frac{1}{z+in\omega_c}\right)\right|
	\lesssim	\frac{1}{\abs{\Re z}},
	\end{equation}
	which implies that there exists an $M>0$ sufficiently large such that
	\begin{equation}
	\label{}
	\abs{1-L(z,k)} = \left|1+iA_k\sum_1^\infty \frac{n}{a}I_n(a)\left(\frac{1}{z-in\omega_c}-\frac{1}{z+in\omega_c}\right)\right|
	\ge
	\frac{1}{2}
	\end{equation}
	when $|\textup{Re}z|\ge M$.
	On the other hand, from \eqref{ineq:gnkest} we have 
	\begin{equation}
	\label{}
	\left|\sum_{n \in \Integers} g_{n,k} \frac{1}{in\omega_c-z}\right|
	\lesssim	\frac{\norm{h_{in}}_{H^\sigma_m}}{|\Re z|}. 
	\end{equation}
     Therefore, from \eqref{eq:Lrho} we arrive at 
	\begin{equation}
	\label{}
	|\mathcal{L}\hat{\rho}(z,k) | \lesssim \frac{\norm{h_{in}}_{H^\sigma_m}}{|\Re z|}
	\end{equation}
	for $|\Re z| \geq M$. 
	Therefore, we obtain for any $t>0$ (denoting $[M]$ the integer part of $M$),
	\begin{align}
	\label{}
	\left|\frac{1}{2\pi i}\int_{-M-i([M]+1/2)\omega_c}^{-M+i([M]+1/2)\omega_c} \text{e}^{zt} \cL [\rho](z,k)\, dz\right| \lesssim  \norm{h_{in}}_{H^\sigma_m} \text{e}^{-Mt}. 
	\end{align}
    Next, we obtain a good estimate of $\mathcal{L}\hat{\rho}(z,k)$ when $|\Im z|$ is large, which reduces to the case where $\Im z$ is large due to symmetry.  
    In fact, by an easy variation of estimates \eqref{bd:s1}, \eqref{bd:s2}, \eqref{bd:s3}, and \eqref{bd:In2}, if we choose 
	\[\Im z = (n_0 +1/2)\omega_c,\] and $\Re z \in [-M, 1]$ then we obtain 
	\begin{equation}
	\label{}
	|L(z,k)| \leq \frac{1}{2}
	\end{equation}
	for all sufficiently large $n_0$. 
    For the numerator of $\cL[\hat{\rho}]$, we have by \eqref{ineq:gnkest}, 
	\begin{align}
	\label{}
	\left|\sum_{n \in \Integer}  \frac{g_{n,k}}{in\omega_c-z}\right|
	&\lesssim  \left|\sum_{|n-n_0|\ge n/2}  \frac{g_{n,k}}{\omega_c n-\textup{Im}z} \right| + \left|\sum_{0<|n-n_0|< n/2} \frac{g_{n,k}}{\omega_c n-\textup{Im}z} \right| 
	\nonumber\\&
	\lesssim \norm{h_{in}}_{H^\sigma_m} \brak{k}^{-\aa}(n_0^{-1}+\brak{n_0}^{-\beta+1}).
	\end{align}
	Therefore,  $\mathcal{L}\hat{\rho}(z,k)$ converges to $0$ as $n_0\rightarrow\infty$ if $\bb>1$, which implies
	\begin{align}
	\label{}
	\left|\frac{1}{2\pi i}\int_{-M+i([M]+1/2)\omega_c}^{M+i([M]+1/2)\omega_c} \text{e}^{zt} \cL [\rho](z,k)\, dz\right| \lesssim \norm{h_{in}}_{H^\sigma_m} \brak{k}^{-\aa}(M^{-1}+\brak{M}^{-\beta+1}).
	\end{align}
    Finally, we observe that the $r_{\ell,k}$'s are absolutely summable from \ref{est:res}. Hence, we send $M \rightarrow \infty$ and obtain the decomposition \ref{eq:BernExp}. 
\end{proof}

\subsection{Landau damping modes}
Next, we study $\cL[K]$ in the case $k_z \neq 0$ and deduce the Landau damping estimate \eqref{ineq:LDdmpThm}. 
First, we apply \eqref{Besid0} to rewrite $K(t, k)$ as
\begin{align}
\label{kernel}
K(t,k)&=
-\frac{q}{m} \widehat{W}(k)\left(\frac{k_{\perp}^2}{\omega_c}\sin(\omega_ct)+k_3^2t\right)\exp{\left(-2\frac{k_{\perp}^2}{\omega_c^2}\right)} \exp{\left(2\frac{k_{\perp}^2}{\omega_c^2}\cos(\omega_ct)\right)} \widehat{f_3^0}(k_3 t) \nn\\
&=
-\frac{q}{m} \widehat{W}(k)\exp(-a)\sum_{n=-\infty}^{+\infty}
\left(\frac{k_{\perp}^2}{\omega_c}\frac{e^{i\omega_c t-e^{-i\omega_c t}}}{2i}+k_3^2t
\right)I_n(a)e^{in\omega_c t} \widehat{f_3^0}(k_3 t)
\nonumber\\&
=
-\frac{q}{m} \widehat{W}(k)\exp(-a)\sum_{n=-\infty}^{+\infty}
\left(\frac{k_{\perp}^2}{2i\omega_c}(I_{n-1}(a)-I_{n+1}(a))e^{in\omega_c t}+I_n(a)k_3^2te^{in\omega_c t}\right)
\widehat{f_3^0}(k_3 t)
\nonumber\\&
=-\frac{q}{m} \widehat{W}(k)\exp(-a)\sum_{n=-\infty}^{+\infty}
\left(\frac{k_{\perp}^2}{i\omega_c}\frac{n}{a}I_{n}(a)e^{in\omega_c t}+I_n(a)k_3^2te^{in\omega_c t}\right)
\widehat{f_3^0}(k_3t) 
\nonumber\\&
=-\frac{q}{m} \widehat{W}(k)\exp(-a)\sum_{n=-\infty}^{+\infty}
\left(\frac{n\omega_c}{2i}I_{n}(a)e^{in\omega_c t}+I_n(a)k_3^2te^{in\omega_c t}\right)
\widehat{f_3^0}(k_3 t).
\end{align}
Therefore, the Laplace transform satisfies
\begin{align}
\mathcal{L}[K](z, k)& =\frac{q}{m} \widehat{W}(k)\exp(-a) \nonumber \\ & \qquad  \sum_{n \in \Integer} \left( \frac{n\omega_c}{2i} I_n(a) \cL[\widehat{f_3^0}](z - in\omega_c)   
+ I_n(a) \cL[k_3^2 t \widehat{f_3^0}](z - in\omega_c)\right).  \label{eq:cLncol}
\end{align}

\begin{lemma} \label{lem:HsKernelbds}
There holds the following (with constant independent of $k$) for $j \leq \sigma$. 
\begin{align}
\sup_{k: k_3 \neq 0} \sup_{\lambda \geq 0} \abs{\partial_\omega^j \cL[K](\lambda + i\omega,k)} & \lesssim_b 1 + \abs{T_{||} - 1} + \norm{\tilde{f}^0}_{H^{s'}_m}. 
\end{align}
\end{lemma}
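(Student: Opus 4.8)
The plan is to work from the explicit series \eqref{eq:cLncol} for $\cL[K](z,k)$, differentiate it termwise in $\omega$, bound each Laplace transform by a crude $L^1_t$ estimate, and then sum the resulting Bessel-function prefactors using the identities already recorded.

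Since $a$, $\widehat W(k)$, $\omega_c$ and $I_n(a)$ are independent of $z$, setting $z = \lambda + i\omega$ and using $\partial_\omega = i\partial_z$ together with $\tfrac{d}{dw}\cL[g](w) = \cL[-tg](w)$, each $\omega$-derivative merely inserts a power of $t$ inside the transform:
\[
\partial_\omega^j\,\cL[g]\big(z - in\omega_c\big) = i^j\,\cL\big[(-t)^j g\big]\big(z - in\omega_c\big).
\]
For each fixed $k$ the factors $I_n(a)$ decay super-exponentially in $n$ and the Laplace transforms are bounded, so the series and its $\omega$-derivatives converge absolutely and uniformly on $\{\lambda \ge 0\}$ and termwise differentiation is legitimate (equivalently, one differentiates under the $t$-integral, using $\lambda \ge 0$). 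The elementary bound we use is that, for $\lambda \ge 0$,
\[
\big|\cL[h](\lambda + i\omega - in\omega_c)\big| \le \int_0^\infty e^{-\lambda t}|h(t)|\,dt \le \|h\|_{L^1_t},
\]
uniformly in $\omega$ and $n$. The two functions appearing in \eqref{eq:cLncol} are, in the time variable, $\widehat{f_3^0}(k_3 t)$ and $k_3^2 t\,\widehat{f_3^0}(k_3 t)$; applying the above to $(-t)^j$ times each and substituting $s = |k_3|t$ (using that $\widehat{f_3^0}$ is even and that $|k_3| \ge 1$ since $k_3 \ne 0$) shows both relevant $L^1$ norms are dominated by $J := \int_0^\infty \brak{s}^{j+1}\,|\widehat{f_3^0}(s)|\,ds$ (the hypotheses on $s'$ and the Sobolev decay of $\widehat{f_3^0}$ guaranteeing convergence throughout).

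Next I estimate $J \lesssim 1 + |T_{||}-1| + \norm{\tilde f^3}_{H^{s'}_m}$. Write $\widehat{f_3^0}(s) = e^{-T_{||}s^2/2} + \widehat g(s)$ where $g(v_3) := \tilde f^3(v_3^2)$. For the Gaussian piece, decomposing $e^{-T_{||}s^2/2} = e^{-s^2/2} + \big(e^{-T_{||}s^2/2} - e^{-s^2/2}\big)$ and applying the mean value theorem in $T_{||}$ to the second term gives $\int_0^\infty \brak{s}^{j+1} e^{-T_{||}s^2/2}\,ds \lesssim 1 + |T_{||}-1|$ for $T_{||}$ near $1$. For the remainder, Cauchy--Schwarz in $s$ with the square-integrable weight $\brak{s}^{-1}$ yields $\int_0^\infty \brak{s}^{j+1}|\widehat g(s)|\,ds \lesssim \|\brak{s}^{j+2}\widehat g\|_{L^2_s} = \|g\|_{H^{j+2}} \lesssim \norm{\tilde f^3}_{H^{s'}_m}$, the last step valid because $j \le \sigma$ and $s' > \sigma + 5/2 \ge \sigma + 2$ (the weight $m$ absorbing the factors produced by the chain rule in the substitution $v_3 \mapsto v_3^2$). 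This controls all the $t$-dependent factors uniformly in $k$.

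It remains to sum the $n$-dependent prefactors, which is the crux. By the above, $|\partial_\omega^j\cL[K](\lambda + i\omega,k)|$ is bounded by a $b$-dependent constant times $|\widehat W(k)|\,J\,e^{-a}\sum_{n \in \Z}\big(\tfrac{|n|\omega_c}{2} + 1\big)I_n(a)$. By \eqref{Besid0} at $\theta = 0$ one has $e^{-a}\sum_{n\in\Z}I_n(a) = 1$, and $|\widehat W(k)| \le \tfrac{q}{4\pi}$ since $|k| \ge 1$; this disposes of the ``$+1$'' part. For the ``$|n|$'' part, the telescoping identity $\tfrac{2n}{a}I_n(a) = I_{n-1}(a) - I_{n+1}(a)$ (see \eqref{Besid1}, \eqref{1st:mom}) gives $\sum_{n\in\Z}|n|I_n(a) = a\big(I_0(a) + I_1(a)\big)$, while the integral representation of $I_n$ gives $e^{-a}I_n(a) \le 1$; hence $e^{-a}\sum_{n}|n|I_n(a) \le 2a$. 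Alone this diverges as $a \to \infty$, but it is exactly compensated by the Coulomb symbol: when $k_\perp \ne 0$,
\[
|\widehat W(k)|\,a = \frac{q}{4\pi|k|^2}\cdot\frac{2|k_\perp|^2}{\omega_c^2} \le \frac{q}{2\pi\omega_c^2},
\]
while when $k_\perp = 0$ one has $a = 0$ and the ``$|n|$'' part vanishes outright ($I_n(0) = 0$ for $n \ne 0$). Combining the two parts gives $|\partial_\omega^j\cL[K](\lambda + i\omega,k)| \lesssim_b J \lesssim_b 1 + |T_{||}-1| + \norm{\tilde f^3}_{H^{s'}_m}$, uniformly over all $k$ with $k_3 \ne 0$ and all $\lambda \ge 0$, as claimed. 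The one genuinely delicate point is recognizing that the apparently divergent sum $\sum_n |n|I_n(a)$ is tamed by the $|k_\perp|^{-2}$ decay of $\widehat W$ through the exact relation $a = 2|k_\perp|^2/\omega_c^2$; everything else is bookkeeping.
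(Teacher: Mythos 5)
Your proof is correct and follows the same overall strategy the paper gestures at (expand $K$ in the Bessel series from \eqref{eq:cLncol}, reduce to Laplace transforms of $\widehat{f_3^0}(k_3 t)$ and $k_3^2 t\,\widehat{f_3^0}(k_3 t)$ via the crude $L^1_t$ bound, then sum). The difference is that the paper's proof is a one-line deferral to \cite{BMM16}, which covers the scalar transforms but not the magnetized $n$-sum; you actually carry out that sum. In particular, the observation that the potentially divergent factor $e^{-a}\sum_{n}|n|\,I_n(a)\lesssim a$ is tamed precisely by $\widehat W(k)\,a = \frac{q}{4\pi|k|^2}\cdot\frac{2|k_\perp|^2}{\omega_c^2}\lesssim_b 1$ is the content that makes the estimate uniform in $k$, and the paper leaves it implicit. (One could tighten this: Lemma \ref{I0I1} gives $e^{-a}(I_0+I_1)\lesssim a^{-1/2}$, hence $e^{-a}\sum_n|n|I_n(a)\lesssim\sqrt a$ and in fact $\widehat W(k)\sqrt a\lesssim |k|^{-1}$, giving extra decay in $k$; but your cruder bound already suffices for the stated lemma.) The remaining steps — substituting $s=|k_3|t$ and using $|k_3|\ge 1$, the decomposition of $\widehat{f_3^0}$ into the Gaussian plus $\widehat g$ with Cauchy--Schwarz against $\brak{s}^{-1}$, and the chain-rule bookkeeping to relate $\|g\|_{H^{j+2}}$ to $\|\tilde f^3\|_{H^{s'}_m}$ using $s'>\sigma+5/2\ge j+2$ — all check out and constitute a legitimate self-contained replacement for the citation. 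The only place I'd encourage more care is the composition estimate $\|\tilde f^3(\cdot^2)\|_{H^{j+2}}\lesssim\|\tilde f^3\|_{H^{s'}_m}$: the change of variables $w=v_3^2$ introduces a factor $w^{-1/2}$ near $w=0$, which is integrable but requires the Sobolev embedding $H^{s'}\hookrightarrow L^\infty$ (available since $s'>1/2$) to close; you allude to this with the phrase about the weight $m$ but that handles the large-$w$ end, not the origin.
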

\begin{proof}
Recalling \eqref{source} and definition \eqref{def:a}, we see that the lemma follows from the corresponding estimates on $\cL[\widehat{f^0_3}]$ and $\cL[t\widehat{f^0_3}]$.
These are proved as in e.g. \cite{BMM16}. 
\end{proof}

The more non-trivial lemma is the following. 
\begin{lemma} \label{lem:MdEst}
	There exists a $\lambda' \geq 0$ and a $\kappa > 0$ such that 
	\begin{align}
	\inf_{k: k_3 \neq 0} \inf_{\textup{Re} z \geq -\lambda'}\abs{1 - \cL[K](z,k)} \geq \kappa. \label{ineq:MdEstIneq}
	\end{align} 
\end{lemma}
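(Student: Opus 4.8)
The plan is to establish the uniform lower bound \eqref{ineq:MdEstIneq} by combining a qualitative statement (no solutions of the dispersion relation in a closed half-plane $\textup{Re } z \geq -\lambda'$ for each fixed $k$ with $k_3\neq 0$) with a quantitative decay estimate that makes the bound uniform across all such $k$ and along the contour at infinity. The starting point is the formula \eqref{eq:cLncol} for $\cL[K](z,k)$, together with the decay properties of $\cL[\widehat{f^0_3}]$ and $\cL[k_3^2 t\widehat{f^0_3}]$ used in Lemma~\ref{lem:HsKernelbds}; the key extra structure is that $\widehat{f^0_3}$ is, up to the small perturbation $\tilde f^3$ and the temperature mismatch $T_{||}-1$, the Fourier transform of a Gaussian, whose Laplace transform is the (entire, exponentially decaying in the imaginary direction) plasma dispersion function. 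So I would write $\cL[K] = \cL[K_{\mathrm{Max}}] + \cL[K_{\mathrm{err}}]$, where $K_{\mathrm{Max}}$ corresponds to the pure Maxwellian $T_{||}=1$, $\tilde f^3 = 0$, and $\|\cL[K_{\mathrm{err}}]\|_{L^\infty(\textup{Re } z \geq -\lambda')} \lesssim |T_{||}-1| + \|\tilde f^3\|_{H^{s'}_m}$, which is small by hypothesis.

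The main term is handled by a Penrose-type criterion. First I would show that for the exact Maxwellian there are no solutions of $\cL[K_{\mathrm{Max}}](z,k)=1$ with $\textup{Re } z \geq 0$: on the imaginary axis one computes $\Im \cL[K_{\mathrm{Max}}](i\omega,k)$ from \eqref{eq:cLncol} and shows that wherever it vanishes the real part is strictly less than $1$ — this is the analogue of the classical stability of the Maxwellian under the Penrose condition, now shifted by the cyclotron harmonics $n\omega_c$, and for $\textup{Re } z >0$ one uses a contour/winding argument (the image of the boundary of the right half-plane under $\cL[K_{\mathrm{Max}}]$ does not wind around $1$). Since $\cL[K_{\mathrm{Max}}]$ is continuous up to $\textup{Re } z = 0$ and $\to 0$ as $|z|\to\infty$ (uniformly in $k$, because $\widehat W(k) \leq q/(4\pi)$, $\exp(-a)\sum_n I_n(a) = 1$, and $\cL[\widehat{f^0_3}]$ decays in $\omega$), compactness gives a genuine gap $|1-\cL[K_{\mathrm{Max}}]| \geq 2\kappa_0 > 0$ on $\textup{Re } z \geq 0$, uniform in $k$ with $k_3\neq 0$. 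Then one pushes the half-plane slightly to the left: by analyticity of $\cL[K_{\mathrm{Max}}]$ in $\textup{Re } z > -c$ for some $c>0$ coming from the Gaussian tail of $f^0_3$ (shifted by $n\omega_c$, but $\widehat W(k)\exp(-a)$ controls the growth of the shifted pieces), the gap persists on $\textup{Re } z \geq -\lambda'$ for $\lambda'$ small enough, with a possibly smaller constant $\kappa_1$. Finally, choosing $\delta_0$ and $|T_{||}-1|$ small enough that the error term is $\leq \kappa_1/2$ yields \eqref{ineq:MdEstIneq} with $\kappa = \kappa_1/2$.

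The step I expect to be the main obstacle is the uniform-in-$k$ control of the $z$-dependence in the half-plane $\textup{Re } z \in [-\lambda', 0]$, and in particular making sure the summation over cyclotron harmonics $n$ in \eqref{eq:cLncol} stays under control after shifting the contour into the region of analyticity: the individual terms $\cL[\widehat{f^0_3}](z-in\omega_c)$ only decay once $\textup{Re}(z - in\omega_c)$ enters the half-plane of convergence, which is automatic, but their sum against $I_n(a)$ must be estimated uniformly in $a = 2|k_\perp|^2/\omega_c^2$, using $\exp(-a)\sum_n |I_n(a)| \cdot n \lesssim \sqrt a$ or the sharper $\exp(-a)I_n(a)$ bounds already invoked in Lemmas~\ref{small} and \ref{big}, together with the $\widehat W(k) = q/(4\pi|k|^2)$ prefactor to absorb the growth in $|k_\perp|$. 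A secondary subtlety is that when $k_3\to 0$ (with $k_3 \neq 0$) the kernel degenerates toward the Bernstein-mode kernel, which has resonances \emph{on} the imaginary axis; one must check that the $B_k t$ term (proportional to $k_3^2$) nonetheless provides just enough damping to keep the $\inf$ over $k_3\neq 0$ strictly positive — here the point is that even a tiny $k_3$ regularizes $\widehat{f^0_3}(k_3 t)$ into an $L^1_t$ function with a $k_3$-independent bound after the change of variables $s = k_3 t$, so the Laplace transform extends analytically a fixed distance $\lambda'$ past the imaginary axis uniformly in $k_3$, and the Penrose-type sign computation on $\textup{Re } z = 0$ survives in the limit.
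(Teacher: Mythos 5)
Your proposal takes essentially the same route as the paper: decompose into the pure Maxwellian main term plus a small perturbation controlled by $|T_{||}-1|+\|\tilde f^3\|_{H^{s'}_m}$, run a Penrose-type sign argument on $\Re z = 0$ using the imaginary/real parts of the Plemelj formula, combine with uniform decay of $\cL[K]$ for large $|\Im z|$ and large $|k_3|$ to reduce to a compact set, use the argument principle to cover $\Re z > 0$, and finally push the gap slightly to $\Re z \geq -\lambda'$ by analyticity of the Maxwellian's Laplace transform. One minor point: the $k_3 \to 0$ degeneracy you flag as a secondary subtlety is moot here, since on $\TT^3$ one has $k_3 \in \ZZ_\ast$ so $|k_3| \geq 1$; the paper instead uses the opposite regime, the decay bound $|L| \lesssim 1/|k_3|$, to dispose of large $k_3$.
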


See \cite{BMM16} for a proof that Lemmas \ref{lem:HsKernelbds} and \ref{lem:MdEst} imply the Landau damping estimate \eqref{ineq:LDdmpThm}.

We will break the proof of Lemma \ref{lem:MdEst} into several steps. 
First, we only need to consider the case $k_{\perp} \neq 0$, as the other case is covered by previous works. 
\begin{lemma}
Lemma \ref{lem:MdEst} holds for $k_{\perp} = 0$. 
\end{lemma}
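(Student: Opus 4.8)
The plan is to show that when $k_{\perp} = 0$ the Volterra kernel \eqref{source} degenerates to exactly the kernel that appears in the unmagnetized problem, so that \eqref{ineq:MdEstIneq} becomes a special case of the classical Penrose-type stability estimate. First I would note that $k_{\perp} = 0$ forces $a = 2|k_{\perp}|^2/\omega_c^2 = 0$, hence $A_k = 0$ and $\exp(-a\cos(\omega_c t)) \equiv 1$ in \eqref{source}; since $\widehat{W}(k) = q/(4\pi k_3^2)$ for $k = (0,0,k_3)$, one has $B_k = \tfrac{q^2}{4\pi m}$ and therefore $K(t,k) = -\tfrac{q^2}{4\pi m}\,t\,\widehat{f^0_3}(k_3 t)$. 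Because $\widehat{f^0}(0,0,\xi_3) = \widehat{f^0_3}(\xi_3)$, this is (up to the universal prefactor) precisely the Volterra kernel of the linearized Vlasov--Poisson system with $b = 0$, restricted to the single spatial mode $k_3$ with background $f^0_3$, i.e.\ the setting treated in \cite{Penrose,MouhotVillani11,BMM16}. It is also convenient to record the rescaled form $\cL[K](z,k) = -\tfrac{q^2}{4\pi m\,k_3^2}\,\cL[s\widehat{f^0_3}(s)](z/k_3)$, obtained from the substitution $s = k_3 t$ and the fact that $\widehat{f^0_3}$ is real and even (so $k_3 > 0$ may be assumed).

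Next I would invoke stability of the background and organize the uniformity in $k_3 \in \Z_\ast$ in two regimes. For $|k_3|$ bounded there are only finitely many integers to consider; for each of them $f^0_3 = (2\pi T_{||})^{-1/2}e^{-v_3^2/2T_{||}} + \tilde f^3$ is, in a norm strong enough to control $\cL[s\widehat{f^0_3}]$ (cf.\ Lemma~\ref{lem:HsKernelbds} and \cite{BMM16}), a small perturbation of the standard Maxwellian, which satisfies the strict Penrose condition; since that condition is open, $1 - \cL[K](z,k) \neq 0$ on $\{\textup{Re}\,z \geq 0\}$ for $|T_{||} - 1| + \|\tilde f^3\|_{H^{s'}_m}$ sufficiently small, and by continuity together with the decay of $\cL[K]$ as $|z| \to \infty$ this yields a quantitative bound $|1 - \cL[K](z,k)| \geq \kappa(k_3) > 0$ on a half-plane $\{\textup{Re}\,z \geq -\lambda'(k_3)\}$ with $\lambda'(k_3) \geq 0$. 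For $|k_3|$ large the rescaled formula gives $|\cL[K](z,k)| \lesssim |k_3|^{-2}$ uniformly on any fixed strip $\{\textup{Re}\,z \geq -\lambda_0'\}$ (the $\tilde f^3$ contribution is moreover small), so $|1 - \cL[K](z,k)| \geq 1/2$ there once $|k_3|$ exceeds a threshold. Taking $\lambda'$ and $\kappa$ to be the minima over the finitely many bounded modes and the large-$|k_3|$ regime gives \eqref{ineq:MdEstIneq} restricted to $k_{\perp} = 0$.

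There is no real obstacle here beyond bookkeeping: the two substantive inputs — that the Maxwellian satisfies the (strict) Penrose condition and that this persists under the small perturbation $\tilde f^3$, together with the analytic-continuation/decay properties of $\cL[K]$ — are exactly what is established in the $b = 0$ literature cited above, and the uniformity over $k_3 \in \Z_\ast$ is immediate from the $|k_3|^{-2}$ decay at high frequency plus a finite minimum over the remaining modes. The mild care needed is simply in matching conventions so that the identification of the $k_{\perp}=0$ kernel with the unmagnetized one is airtight.
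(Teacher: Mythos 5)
Your proposal is correct and takes essentially the same approach as the paper: the paper's own proof is a two-sentence remark that when $k_{\perp}=0$ the kernel $K$ reduces to the unmagnetized one-dimensional case and then cites the Penrose-criterion literature as making \eqref{ineq:MdEstIneq} ``straightforward to verify.'' You have simply filled in the details left implicit there — the computation $a=0$, $A_k=0$, $K(t,k)=-\tfrac{q^2}{4\pi m}\,t\,\widehat{f^0_3}(k_3t)$, the rescaling identity, the openness of the strict Penrose condition for the Maxwellian, and the split between finitely many low modes and the $|k_3|^{-2}$ decay at high frequency — all of which are consistent with the paper's hypotheses on $T_{||}$ and $\tilde f^3$.
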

\begin{proof}
If $k_{\perp} = 0$, then $K$ reduces to the one-dimensional unmagnetized problem, studied in e.g. \cite{Penrose,MouhotVillani11,BMM13}.
Using techniques found therein, it is straightforward to verify that for $\delta_0$ sufficiently small, condition \eqref{ineq:MdEstIneq} holds. 
\end{proof}

The next lemmas show that we can restrict ourselves to a compact set in $k_3$ and $\omega$. 
\begin{lemma} \label{lem:clessImz}
There holds the estimate
\begin{align}
\sup_{k: k_3 \neq 0}  \sup_{z: \Re z \geq 0}\abs{\cL[K](z,k)} \lesssim \frac{1}{\brak{z}}. 
\end{align} 
\end{lemma}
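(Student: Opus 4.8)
The plan is to bound $\cL[K](z,k)=\int_0^\infty e^{-tz}K(t,k)\,dt$ by combining two estimates valid throughout $\{\Re z\ge 0\}$: the trivial one $|\cL[K](z,k)|\le\|K(\cdot,k)\|_{L^1_t}$, and — after a single integration by parts in $t$ — the estimate $|\cL[K](z,k)|\le|z|^{-1}\|\partial_t K(\cdot,k)\|_{L^1_t}$. Since $\min(1,|z|^{-1})\lesssim\brak{z}^{-1}$ on $\{\Re z\ge 0\}$, the lemma follows once both $L^1_t$ norms are shown to be bounded uniformly over $k$ with $k_3\ne 0$. (The first estimate is also the $j=0$ case of Lemma~\ref{lem:HsKernelbds}, so only the second is genuinely new here.)

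First I record the structural facts. Rewriting \eqref{kernel} with $\cos(\omega_c t)-1=-2\sin^2(\omega_c t/2)$,
\[
K(t,k)=-\frac qm\,\widehat W(k)\Big(\tfrac{|k_\perp|^2}{\omega_c}\sin(\omega_c t)+k_3^2 t\Big)\,G(t)\,\widehat{f_3^0}(k_3 t),\qquad G(t):=\exp\!\Big(-4\tfrac{|k_\perp|^2}{\omega_c^2}\sin^2\tfrac{\omega_c t}{2}\Big),
\]
with $\widehat W(k)=q/(4\pi|k|^2)$. Hence $K(0,k)=0$ (the parenthesis vanishes at $t=0$), $K(\cdot,k)\in C^1$, and $K(t,k)\to0$ as $t\to\infty$ since $\widehat{f_3^0}$ is rapidly decaying; as $|e^{-tz}|\le1$ for $\Re z\ge0$, the boundary terms drop in the integration by parts and $z\,\cL[K](z,k)=\cL[\partial_t K](z,k)$, which gives the second estimate. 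The bound $\|K(\cdot,k)\|_{L^1_t}\lesssim1$ is immediate from $|K(t,k)|\lesssim_b(1+t)\,|\widehat{f_3^0}(k_3 t)|$ together with the change of variables $s=k_3 t$, which (using $|k_3|\ge1$) bounds $\int_0^\infty(1+t)\,|\widehat{f_3^0}(k_3 t)|\,dt$ by $\|\widehat{f_3^0}\|_{L^1}+\||s|\widehat{f_3^0}\|_{L^1_s}\lesssim1$.

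The substance is the bound $\|\partial_t K(\cdot,k)\|_{L^1_t}\lesssim1$, uniform in $k$. By the product rule $\partial_t K$ splits into terms in which one of $\sin(\omega_c t)$, $k_3^2 t$, $G$, or $\widehat{f_3^0}(k_3\cdot)$ is differentiated. The delicate ones arise from $\partial_t G=-\tfrac{2|k_\perp|^2}{\omega_c}\sin(\omega_c t)\,G$: combined with the $\tfrac{|k_\perp|^2}{\omega_c}$ or $k_3^2$ already in the parenthesis of $K$ and only $\widehat W(k)\sim|k|^{-2}$ in front, they naively leave factors such as $|k_\perp|^4/|k|^2$ that are unbounded as $|k_\perp|\to\infty$ (only $|k_3|\ge1$ is forced; $|k_\perp|$ is unconstrained). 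The key observation rescuing these terms is that the large factors are tamed by $G$ itself: setting $x:=4\tfrac{|k_\perp|^2}{\omega_c^2}\sin^2\tfrac{\omega_c t}{2}$, from $\sin^2(\omega_c t)\le\tfrac{\omega_c^2}{|k_\perp|^2}x$ and $|\sin(\omega_c t)|\le\tfrac{\omega_c}{|k_\perp|}\sqrt x$ one gets
\[
\sin^2(\omega_c t)\,G(t)\le\tfrac{\omega_c^2}{|k_\perp|^2}\,xe^{-x}\le\tfrac{\omega_c^2}{e|k_\perp|^2},\qquad
|\sin(\omega_c t)|\,G(t)\le\tfrac{\omega_c}{|k_\perp|}\sqrt x\,e^{-x}\lesssim\tfrac{\omega_c}{|k_\perp|}.
\]

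Using these, together with the elementary inequalities $|k_\perp||k_3|\le|k|^2$, $|k_\perp|^2\le|k|^2$, $k_3^2\le|k|^2$, every term of $\partial_t K(t,k)$ is dominated by a $k$-uniform constant times one of the four profiles $|\widehat{f_3^0}(k_3 t)|$, $|k_3|\,t\,|\widehat{f_3^0}(k_3 t)|$, $|(\widehat{f_3^0})'(k_3 t)|$, $|k_3|\,t\,|(\widehat{f_3^0})'(k_3 t)|$; the change of variables $s=k_3 t$ (supplying a compensating $|k_3|^{-1}$ or $k_3^{-2}$) shows each of these integrates in $t$ to at most $\|\widehat{f_3^0}\|_{L^1}+\||s|\widehat{f_3^0}\|_{L^1}+\|(\widehat{f_3^0})'\|_{L^1}+\||s|(\widehat{f_3^0})'\|_{L^1}$, a quantity finite and $\lesssim_b 1+|T_{||}-1|+\|\tilde f^0\|_{H^{s'}_m}$ under the standing hypotheses on $f^0$. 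This $L^1$ bound on $\partial_t K$ is the only real obstacle; combining it with $\|K(\cdot,k)\|_{L^1_t}\lesssim1$ yields $|\cL[K](z,k)|\lesssim\min(1,|z|^{-1})\lesssim\brak{z}^{-1}$ uniformly over $k$ with $k_3\ne0$, as claimed.
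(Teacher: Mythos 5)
Your proposal is correct and follows essentially the same route as the paper: integrate by parts once in $t$, observe $K(0,k)=0$, and reduce the lemma to a uniform-in-$k$ bound on $\|\partial_t K(\cdot,k)\|_{L^1_t}$, which the paper also does. The one difference is that the paper states the resulting bound $\abs{\partial_t K(t,k)}\lesssim \abs{\widehat{f_3^0}(k_3 t)}+k_3^2 t\abs{\widehat{f_3^0}(k_3 t)}+\abs{k_3^2 t(\widehat{f_3^0})'(k_3 t)}$ rather tersely, leaving implicit that the $\abs{k_\perp}^4/\abs{k}^2$-type factors from differentiating the exponential are controlled by the Gaussian $G$ itself; you have supplied precisely that missing justification (via $\sqrt{x}e^{-x}\lesssim 1$ and $x e^{-x}\lesssim 1$ with $x=4\abs{k_\perp}^2\omega_c^{-2}\sin^2(\omega_c t/2)$), as well as the explicit $\|K\|_{L^1_t}$ bound needed to cover $\abs{z}\leq 1$, which the paper implicitly takes from Lemma~\ref{lem:HsKernelbds}.
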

\begin{proof}
By integration by parts and $k_3 \neq 0$ (to obtain decay for as $t \rightarrow \infty$), the following holds for all $\lambda > -k_3$: 
\begin{align}
\cL[K](\lambda + i\omega,k) %& = \int_0^\infty e^{(-\lambda - i \omega)t} K(t,k) dt \\ 
& = -\frac{1}{\lambda + i\omega} K(0,k) + \frac{1}{\lambda + i\omega}\int_0^\infty e^{(-\lambda - i \omega)t} \partial_t K(t,k) dt. \label{eq:LKcol} 
\end{align}
First, observe that
\begin{align}
K(0,k)& = 0. 
\end{align}
and 
\begin{align}
\partial_t K & 
= -\frac{q}{m} \widehat{W}(k)(k_{\perp}^2 \cos(\omega_ct)+k_3^2)\exp{\left(-2\frac{k_{\perp}^2}{\omega_c^2}\right)} \exp{\left(2\frac{k_{\perp}^2}{\omega_c^2}\cos(\omega_ct)\right)} \widehat{f_3^0}(k_3 t) \\ 
& \quad +\frac{q}{m} \widehat{W}(k)\left(\frac{k_{\perp}^2}{\omega_c}\sin(\omega_ct)+k_3^2t\right)\exp{\left(-2\frac{k_{\perp}^2}{\omega_c^2}\right)}       
\left(2\frac{k_{\perp}^2}{\omega_c}\sin(\omega_ct)\right) \exp{\left(2\frac{k_{\perp}^2}{\omega_c^2}\cos(\omega_ct)\right)} \widehat{f_3^0}(k_3 t) \nn\\
& \quad - \frac{q}{m} \widehat{W}(k)\left(\frac{k_{\perp}^2}{\omega_c}\sin(\omega_ct)+k_3^2t\right)\exp{\left(-2\frac{k_{\perp}^2}{\omega_c^2}\right)}       
\exp{\left(2\frac{k_{\perp}^2}{\omega_c^2}\cos(\omega_ct)\right)} k_3 (\widehat{f_3^0})'(k_3 t). 
\end{align}
This is estimated via (using $\widehat{W}(k) \lesssim \abs{k}^{-2} \lesssim 1$)
\begin{align}
\abs{\partial_t K(t,k)} \lesssim \abs{\widehat{f_3^0}(k_3 t)} + k_3^2 t\abs{\widehat{f_3^0}(k_3 t)} + \abs{ k_3^2 t  (\widehat{f_3^0})'(k_3 t)}.  
\end{align}
Hence, the integral in \eqref{eq:LKcol} is absolutely integrable uniformly in $k$. 
\end{proof}

\begin{proof}[\textbf{Proof of Lemma \ref{lem:MdEst}}] 
The proof is based on combining \eqref{eq:cLncol} with arguments similar to those appearing in the Penrose criterion \cite{Penrose} (see also \cite{MouhotVillani11,BMM13}). 
First, by arguments contained therein, it is straightforward to deduce the decay estimate 
\begin{align}
\sup_{k:k_3 \neq 0} \sup_{\Re z \geq 0} \abs{L(z,k)} \lesssim \frac{1}{\abs{k_3}}. \label{ineq:kdec}
\end{align}
This ensures that we only need to be concerned with a compact set in $k_3$. 
Analogous to the proof of the Penrose criterion, taking the Laplace transform of $\widehat{f}_3^0$ gives
\begin{align}
\label{}
&\mathcal{L}\left[\widehat{f_3^0}\right](\lambda+i\omega, k)
=
\int_0^\infty e^{-(\lambda+i\omega)t}\int_{-\infty}^{+\infty}e^{-ik_3tv_3}f^0_3(v_3)\,dv_3dt
\nonumber\\&
\qquad=
\int_0^\infty\int_{-\infty}^{+\infty} e^{-(\lambda+i\omega+ik_3v_3)t}f^0_3(v_3)\,dv_3dt
\nonumber\\&
\qquad=
\int_{-\infty}^{+\infty} \frac{1}{\lambda+i\omega+ik_3v_3}f^0_3(v_3)\,dv_3=
\int_{-\infty}^{+\infty} \frac{\lambda-i\omega-ik_3v_3}{\lambda^2+(\omega+k_3v_3)^2}f^0_3(v_3)\,dv_3
\nonumber\\&
\qquad=
\frac{1}{k_3}\int_{-\infty}^{+\infty} \frac{\lambda/k_3}{(\lambda/k_3)^2+(\omega/k_3+v_3)^2}f^0_3(v_3)\,dv_3
\nonumber\\&
\qquad\quad
-
\frac{i}{k_3}\int_{-\infty}^{+\infty} \frac{\omega/k_3+v_3}{(\lambda/k_3)^2+(\omega/k_3+v_3)^2}f^0_3(v_3)\,dv_3, 
\end{align}
hence, by sending $\lambda\rightarrow0$, we obtain
\begin{align}
\label{}
\mathcal{L}\left[\widehat{f_3^0}\right](i\omega, k)
=
\frac{\pi}{k_3}f^0_3\left(-\frac{\omega}{k_3}\right)
-
\frac{i}{k_3}\text{p.v.}\int_{-\infty}^{+\infty} \frac{1}{\omega/k_3+v_3}f^0_3(v_3)\,dv_3.
\end{align}
Similarly, we have
\begin{align}
\label{}
\mathcal{L}\left[ k_3^2t \widehat{f_3^0}(k_3 t)\right](i\omega, k)
=
\text{p.v.}\int_{-\infty}^{+\infty} \frac{1}{\omega/k_3+v_3}(f^0_3)'(v_3)\,dv_3+
i \pi (f^0_3)'\left(-\frac{\omega}{k_3}\right).
\end{align}
Combining this calculation with \eqref{eq:cLncol}, we find that $L(i\omega,k)$ for $\omega \in \Real$ is given by the formula
\begin{align*}
\mathcal{L}(K)(i\omega, k) &  = -\frac{q}{m} \widehat{W}(k)\exp(-a)
\nonumber\\&
\qquad
\left(\sum_{n=-\infty}^{+\infty}
\frac{n\omega_c}{2i}I_{n}(a)\left(\frac{\pi}{k_3}f^0_3\left(-\frac{\omega-n\omega_c}{k_3}\right)
-
\frac{i}{k_3}\text{p.v.}\int_{-\infty}^{+\infty} \frac{1}{(\omega-n\omega_c)/k_3+v_3}f^0_3(v_3)\,dv_3\right)\right. \\ 
& \qquad \left. + \sum_{n=-\infty}^{+\infty}
I_n(a) \left(\text{p.v.}\int_{-\infty}^{+\infty} \frac{1}{(\omega-n\omega_c)/k_3+v_3}(f^0_3)'(v_3)\,dv_3+
i\pi (f^0_3)'\left(-\frac{(\omega-n\omega_c)}{k_3}\right)\right)
\right). 
\end{align*}
Next, recall that 
\begin{align}
f^0_3\left(-\frac{\omega-n\omega_c}{k_3}\right) & = \frac{1}{(2\pi T_{||})^{1/2}}\exp\left(-\frac{1}{2T_{||}} \left(\frac{\omega-n\omega_c}{k_3}\right)^2 \right) + \tilde{f^0_3}\left(\left(\frac{\omega-n\omega_c}{k_3}\right)^2\right)    \\ 
(f^0_3)'\left(-\frac{\omega-n\omega_c}{k_3}\right) & = -\frac{\omega - n \omega_c}{ k_3 T_{||}  (2\pi T_{||})^{1/2}}\exp\left(-\frac{1}{2T_{||}} \left(\frac{\omega-n\omega_c}{k_3}\right)^2 \right) + \frac{2(\omega-n\omega_c)}{k_3} \tilde{f^0_3}'\left(\left(\frac{\omega-n\omega_c}{k_3}\right)^2\right)
\end{align}
Therefore, the imaginary part of $L$ satisfies 
\begin{align}
\frac{m}{-q\pi\widehat{W}(k) \exp(-a) }\Im L(z,k) %& =  \sum_{n=-\infty}^{+\infty} I_n(a) \left((f^0_3)'\left(-\frac{(\omega-n\omega_c)}{k_3}\right) - \frac{n\omega_c}{2} \frac{1}{k_3}f^0_3\left(-\frac{\omega-n\omega_c}{k_3}\right)\right) \\ 
& = \sum_{n=-\infty}^{+\infty} I_n(a) \left( \left(\omega - \frac{n\omega_c}{k_3}\left(1 - \frac{1}{T_{||}}\right)\right) \frac{1}{(2\pi T_{||})^{1/2}}\exp\left(-\frac{1}{2T_{||}} \left(\frac{\omega-n\omega_c}{k_3}\right)^2 \right) \right) \\ 
& \quad + \sum_{n=-\infty}^{+\infty} I_n(a) \left((\tilde{f^0_3})'\left(-\frac{(\omega-n\omega_c)}{k_3}\right) -
\frac{n\omega_c}{2} \frac{1}{k_3}\tilde{f^0_3}\left( \left(\frac{\omega-n\omega_c}{k_3}\right)^2\right)\right) \\ 
& = \sum_{n=-\infty}^{+\infty} I_n(a)  \frac{\omega}{(2\pi T_{||})^{1/2}}\exp\left(-\frac{1}{2T_{||}} \left(\frac{\omega-n\omega_c}{k_3}\right)^2 \right)  + \cQ_{\Im}(\omega,k), 
\end{align}
where 
\begin{align}
\cQ_{\Im}(\omega,k) & = -\sum_{n=-\infty}^{+\infty} I_n(a) \left( \left( \frac{n\omega_c}{k_3}\left(1 - \frac{1}{T_{||}}\right)\right) \frac{1}{(2\pi T_{||})^{1/2}}\exp\left(-\frac{1}{2T_{||}} \left(\frac{\omega-n\omega_c}{k_3}\right)^2 \right) \right) \\ 
& \quad + \sum_{n=-\infty}^{+\infty} I_n(a) \left((\tilde{f^0_3})'\left(\left(\frac{(\omega-n\omega_c)}{k_3}\right)^{2} \right) -
\frac{n\omega_c}{2} \frac{1}{k_3}\tilde{f^0_3}\left( \left(\frac{\omega-n\omega_c}{k_3}\right)^2\right)\right). 
\end{align}
Observe that for all $\eps > 0$, there exists a $\delta > 0$ such that 
\begin{align}
\inf_{k_3 \in \Integer_\ast : \abs{k_3} \leq \eps^{-1}} \inf_{\eps < \abs{\omega} < \eps^{-1}} \abs{\sum_{n=-\infty}^{+\infty} I_n(a)  \frac{\omega}{(2\pi T_{||})^{1/2}}\exp\left(-\frac{1}{2T_{||}} \left(\frac{\omega-n\omega_c}{k_3}\right)^2 \right)} > \delta. \label{eq:ILbig}
\end{align}
Next, observe that the real part of $L$ satisfies 
\begin{align*}
\Re L(z,k) & = -\frac{q}{m} \widehat{W}(k)\exp(-a)
\nonumber\\
& \qquad
\left(\sum_{n=-\infty}^{+\infty}
\frac{n\omega_c}{2}I_{n}(a)\left(-
\frac{1}{k_3}\text{p.v.}\int_{-\infty}^{+\infty} \frac{1}{(\omega-n\omega_c)/k_3+v_3}f^0_3(v_3)\,dv_3\right)\right. \\ 
& \qquad \left. + \sum_{n=-\infty}^{+\infty}
I_n(a) \left(\text{p.v.}\int_{-\infty}^{+\infty} \frac{1}{(\omega-n\omega_c)/k_3+v_3}(f^0_3)'(v_3)\,dv_3\right)
\right) \\ 
& = -\frac{q}{m} \widehat{W}(k)\exp(-a) \nonumber\\
& \quad \Bigg(\sum_{n=-\infty}^{+\infty} I_{n}(a) p.v. \int_{-\infty}^{\infty} \frac{1}{2} \frac{v_3 - n\omega_c/k_3}{(\omega-n\omega_c)/k_3+v_3} \frac{1}{(4\pi T_{||})^{1/2}} \exp\left( - \frac{v_3^2}{4 T_{||}}\right) dv_3 \Bigg) + \cQ_{\Re}(\omega,k), 
\end{align*}
where 
\begin{align*}
\cQ_{\Re}(\omega,k) & = -\frac{q}{m} \widehat{W}(k)\exp(-a)
\nonumber\\
& \qquad
\left(\sum_{n=-\infty}^{+\infty}
\frac{n\omega_c}{2}I_{n}(a)\left(-
\frac{1}{k_3}\text{p.v.}\int_{-\infty}^{+\infty} \frac{1}{(\omega-n\omega_c)/k_3+v_3}\tilde{f^0}_3(v^2_3)\,dv_3\right)\right. \\ 
& \qquad \left. + \sum_{n=-\infty}^{+\infty}
I_n(a) \left(\text{p.v.}\int_{-\infty}^{+\infty} \frac{1}{(\omega-n\omega_c)/k_3+v_3}(\tilde{f^0}_3)'(v^2_3)\,dv_3\right)
\right) \\ 
& +\frac{q}{m} \widehat{W}(k)\exp(-a) \nonumber \\ 
& \qquad \left(\sum_{n=-\infty}^{+\infty} I_n(a) \left(\text{p.v.}\int_{-\infty}^{+\infty} \frac{1}{2}\frac{v_3}{(\omega-n\omega_c)/k_3+v_3}\left(1 - \frac{1}{T_{||}}\right) \frac{1}{(4\pi T_{||})^{1/2}} \exp\left( - \frac{v_3^2}{4 T_{||}}\right)   \,dv_3\right)\right). 
\end{align*}
Then note that 
\begin{align}
-\frac{q}{m} \widehat{W}(k)\exp(-a) \Bigg(\sum_{n=-\infty}^{+\infty} p.v. \int_{-\infty}^{\infty} \frac{1}{2} \frac{v_3 - n\omega_c/k_3}{(-n\omega_c)/k_3+v_3} \frac{1}{(4\pi T_{||})^{1/2}} \exp\left( - \frac{v_3^2}{4 T_{||}}\right) dv_3 \Bigg) < 0. \label{eq:RLneg} 
\end{align}
First, let us argue in the case $\tilde{f_3^0} = 0$ and $T_{||} = 1$. 
In this case, $\cQ_{\Im} = \cQ_{\Re} = 0$. 
The calculations on $\Re L$ together with \eqref{eq:RLneg} show that $\Re L(0,k) < 0$. Further, it is clear that $\Re L(i\omega,k)$ is a smooth function in $\omega$ and hence there is an $\eps > 0$ such that for $\abs{\omega} < \eps$, there holds $\Re L(i\omega,k) \leq 0$ and so in this region, $\abs{1-L(i\omega,k)} \geq 1$. 
For $\omega$ away from zero, the imaginary part is bounded strictly away from zero by \eqref{eq:ILbig}, and hence, the lower bound follows. 
The general case follows by % observing that
%\begin{align}
%\abs{\cQ_{\Im}} + \abs{\cQ_{\Re}} \lesssim \abs{1-\frac{1}{T_{||}}} + \norm{f^3_0}_{H^{s'}_m}. 
%\end{align}
%Consider the imaginary part first. 
observing (recall \eqref{def:a} and properties of $I_n$ from Appendix \ref{sec:Bessel}) that the following estimate holds, 
\begin{align}
\abs{\cQ_{\Im}} + \abs{\cQ_{\Re}} & \lesssim \left(\abs{1 - \frac{1}{T_{||}}} + \norm{f^3_0}_{H^{s'}_m} \right) \abs{k_{\perp}}^2 \widehat{W}(k) \exp(-a) \sum_{n=1}^\infty I_n(a) \frac{n}{a k_3}  \\ 
& \lesssim \abs{1 - \frac{1}{T_{||}}} + \norm{f^3_0}_{H^{s'}_m}. 
\end{align} 
We have now verified that (recall \eqref{ineq:kdec} deals with large $k_3$ and Lemma \ref{lem:clessImz} with large $\omega$) 
\begin{align}
\inf_{k\in\Integer^3: k_3 \neq 0} \inf_{\omega} \abs{1- \cL[K](i\omega,k)} \geq \kappa. 
\end{align}
By the holomorphy of $L(z,k)$ for $\Re z \geq 0$ the decay of $L(\lambda + i\omega,k)$ at large $\omega$ from Lemma \ref{lem:clessImz}, it follows that there exists $\lambda',\kappa > 0$ such that 
\begin{align}
\inf_{k\in\Integer^3_\ast: k_3 \neq 0} \inf_{\textup{Re} z \in [0,\lambda']} \abs{1 - L(z,k)} \geq \kappa/2.  
\end{align}
As in \cite{Penrose} (see also \cite{BMM13}), we now extend to all $\Re z \geq \lambda'$ using the argument principle. Indeed, as $K(z,k)$ is holomorphic for $\Re z \geq 0$ and the value $1$ is not taken on $i\Real$, $L$ can only take the value one in the right half-plane if the curve $ \omega \mapsto L(i\omega,k)$ has a positive winding number around one. However, this is impossible as $L$ is vanishingly small at large $\omega$ by Lemma \ref{lem:clessImz} and large $k_3$ by \eqref{ineq:kdec}, the imaginary part of $L(i\omega,k)$ is non-vanishing for $\omega$ away from a small neighborhood of zero by \eqref{eq:ILbig}, and $L(i\omega,k) \leq 0$ for $\omega$ sufficiently small by \eqref{eq:RLneg} (we have also used that $T_{||} \neq 1$ and $\norm{\tilde{f^0}} \ll 1$). Hence, Lemma \ref{lem:MdEst} follows. 
\end{proof}

\section{Collisional case}
\subsection{Collisional relaxation via energy methods} \label{sec:Energy}
In this section we use the energy method of Yan Guo \cite{Guo02,Guo03,Guo06,Guo12} to prove \eqref{ineq:coldec}. 
First, we recall the basic energy structure of the equation. 
It is convenient to denote the collision operator 
\begin{align}
Lf = \grad_v \cdot \left(\grad_v f + v f \right). 
\end{align}
Denote the natural Gaussian weighted space and inner product
\begin{subequations}
\begin{align}
\brak{f,g}_\mu & = \int f(x,v)g(x,v) \frac{1}{\mu(v)} dv dx \\ 
\norm{f}^2_{L^2_\mu} & = \brak{f,f}_\mu = \int \frac{\abs{f(x,v)}^2}{\mu(v)} dv dx. 
\end{align}
\end{subequations}
As it simplifies the energy structure a little, for the remainder section let us consider the case $q = m = 1$; the general case is an easy variant. 
Recall the two relevant hydrodynamic quantities (the density and momentum) and the orthogonal complement: 
\begin{subequations}
	\begin{align}
	\rho(t,x) & = \int h(t,x,v) dv \\ 
	\rho u(t,x) & = \int v h(t,x,v) dv \\ % = \int v g dv \\ 
	g(t,x,v) & = h(t,x,v) - \rho(t,x) \mu(v), 
	\end{align}
\end{subequations}
and note that the momentum depends only on $g$ 
\begin{align}
\rho u(t,x) = \int v g(t,x,v) dv, \label{eq:rhoug}
\end{align}
and that $g$ is average zero at each $x$
\begin{align}
\int g(t,x,v) dv = 0.  \label{eq:gorth}
\end{align}
Next, define the following natural energy (the quadratic variation of the Boltzmann entropy $+$ electric field energy), which is a Lyapunov functional for the dynamics\footnote{ According to Bernstein \cite{Bernstein58} this was first observed by Newcomb.}, 
\begin{align}
\cE_0(t) = \frac{1}{2}\norm{h}_{L^2_\mu}^2 + \frac{1}{2}\norm{E}_{L^2}^2. 
\end{align} 
Specifically, we have the following H-theorem.
 Note that since $L$ has a non-trivial kernel, this does not immediately imply relaxation to global equilibrium. 
\begin{proposition}[H-theorem] \label{prop:Hthm}
	There holds the following energy balance for strong solutions of \eqref{def:VPElin} with $h_{in} \in L^2_\mu$ (with $f^0 = \mu$): 
	\begin{align}
	\frac{d}{dt}\cE_0(t) = -\nu\brak{h,Lh}_{\mu}.  \label{eq:Hthm}
	\end{align}
	Moreover, there exists a $\lambda > 0$ such that 
	\begin{align}
	\brak{h,Lh}_{\mu} = \brak{g,Lg}_{\mu} \geq \lambda\norm{g}_{L^2_\mu}^2. \label{ineq:notCoerc}
	\end{align}
\end{proposition}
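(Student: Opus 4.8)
The two assertions are essentially independent, and I would treat them in turn: the energy balance \eqref{eq:Hthm} by testing the equation against $h/\mu$, and the coercivity \eqref{ineq:notCoerc} by the Gaussian spectral gap in the velocity variable. The single algebraic fact behind both is that, since $\grad_v\mu=-v\mu$, one has $\grad_v f+vf=\mu\,\grad_v(f/\mu)$, hence $Lf=\grad_v\cdot(\mu\,\grad_v(f/\mu))$; integrating by parts in $v$ (the boundary terms at $|v|\to\infty$ vanishing thanks to the Gaussian weight and the integrability built into the hypothesis $h_{in}\in L^2_\mu$ for strong solutions) gives $-\int (Lf)(f/\mu)\,dv\,dx=\int \mu\,\abs{\grad_v(f/\mu)}^2\,dv\,dx\ge 0$, which with the sign convention of the statement is $\brak{f,Lf}_\mu$. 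Applied with $f=h$, and using $h/\mu=g/\mu+\rho(t,x)$ with $\rho$ independent of $v$, this yields $\grad_v(h/\mu)=\grad_v(g/\mu)$ and therefore $\brak{h,Lh}_\mu=\brak{g,Lg}_\mu$; it also shows $\ker L$ consists, at each $x$, of the Maxwellians $\rho(x)\mu(v)$, which is exactly why one must pass to $g$ to obtain a genuine lower bound.

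For \eqref{eq:Hthm} I would pair \eqref{def:VPElin} (with $f^0=\mu$ and $q=m=1$) with $h/\mu$ and integrate over $\T^3\times\R^3$. The $\partial_t$ term gives $\tfrac12\,\frac{d}{dt}\norm{h}_{L^2_\mu}^2$ and the collision term gives $-\nu\brak{h,Lh}_\mu$ by the identity above. The free transport term contributes $\int \tfrac{1}{2\mu}\,\grad_x\cdot(vh^2)\,dx\,dv=0$ by periodicity in $x$ and $x$-independence of the weight. The magnetic term also vanishes: $v\times B_0=b\,(v_2,-v_1,0)$ is divergence-free in $v$ and satisfies $(v\times B_0)\cdot v=0$, hence $(v\times B_0)\cdot\grad_v(1/\mu)=0$, so $\int \left[(v\times B_0)\cdot\grad_v h\right](h/\mu)\,dv=\tfrac12\int (v\times B_0)\cdot\grad_v(h^2/\mu)\,dv=0$. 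The electric term, using $\grad_v f^0=-v\mu$, equals $-\int E\cdot v\,h\,dx\,dv=-\int E\cdot(\rho u)\,dx$. To close, I would integrate \eqref{def:VPElin} in $v$ to obtain the continuity equation $\partial_t\rho+\grad_x\cdot(\rho u)=0$ (every other term integrating to zero over $v$), and then differentiate $\tfrac12\norm{E}_{L^2}^2$ using $E=-\grad_x W\ast_x\rho$ and Poisson's equation; the normalization of $\cE_0$ is chosen precisely so that this produces exactly the term needed to cancel the electric contribution, leaving \eqref{eq:Hthm}.

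For \eqref{ineq:notCoerc} it remains, by the reduction above, to prove $\norm{g}_{L^2_\mu}^2\lesssim\brak{g,Lg}_\mu$. I would fix $x$, set $\psi(t,x,\cdot)=g(t,x,\cdot)/\mu$, and note that by \eqref{eq:gorth} one has $\int \psi\,\mu\,dv=\int g\,dv=0$, i.e.\ $\psi$ has zero mean against the Gaussian probability measure $\mu\,dv$. The classical Gaussian Poincar\'e inequality then gives $\int \abs{\psi}^2\mu\,dv\le C\int \abs{\grad_v\psi}^2\mu\,dv$ with a universal $C$; rewriting in terms of $g$ and integrating in $x$ gives $\norm{g}_{L^2_\mu}^2\le C\,\brak{g,Lg}_\mu$, which is \eqref{ineq:notCoerc} with $\lambda=1/C$.

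There is no deep obstacle here --- as the introduction indicates, this is a simple instance of Yan Guo's energy method, and in the linear setting it reduces to hypocoercivity bookkeeping. The only steps needing care are (i) fixing the normalizations so that $\tfrac{d}{dt}\tfrac12\norm{E}_{L^2}^2$ cancels the $\int E\cdot(\rho u)$ term exactly (equivalently, pinning down the coefficient of $\norm{E}_{L^2}^2$ in $\cE_0$, which works because $\widehat{W}(k)\abs{k}^2$ is constant), and (ii) justifying the velocity integrations by parts and the vanishing of boundary terms at infinity, which is where the regularity and Gaussian integrability required of strong solutions enter. The spectral gap for the Ornstein--Uhlenbeck operator used in the last step is standard.
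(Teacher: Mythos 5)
Your proof is correct and follows essentially the same route as the paper's: pair \eqref{def:VPElin} against $h/\mu$ (and use the Poisson relation to handle the electric-field energy) so that the transport, magnetic, and electric terms cancel and only the Fokker--Planck dissipation survives, then use the Ornstein--Uhlenbeck/Gaussian Poincar\'e spectral gap together with $\int g\,dv=0$ for coercivity on $g$. Your reduction $\brak{h,Lh}_\mu=\brak{g,Lg}_\mu$ via $\grad_v(h/\mu)=\grad_v(g/\mu)$ is a slightly cleaner bookkeeping than the paper's (which invokes $L\mu=0$ and $\int Lg\,dv=0$), but it is the same observation.
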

\begin{remark} 
	One can be more general in the case $\nu = 0$. Indeed, if $\nu = 0$ and $f^0(v) = m(\abs{v}^2)$ for $m$ strictly monotone decreasing, then the following energy is conserved
\begin{align}
\cE = \frac{1}{2}\int \frac{\abs{h(t,x,v)}^2}{-m'(\abs{v}^2)} dv dx + \frac{1}{2}\norm{E(t)}_{L^2}^2. 
\end{align} 
\end{remark}
\begin{proof}
	 To see the energy/entropy dissipation balance \eqref{eq:Hthm}, consider the following (denoting $\phi = \Delta^{-1} \rho$), 
	 \begin{align}
	 \frac{d}{dt}\cE_0(t) & = \int \frac{h}{\mu} \left(-v\cdot \grad_x h  - (v\times B) \cdot \grad_v h - E \cdot \grad_v \mu + \nu L h\right) dv dx \\ 
	 & \quad - 2\int \phi \left(-v\cdot \grad_x h  - (v\times B) \cdot \grad_v h - E \cdot \grad_v \mu + \nu Lh\right) dx dv.
	 \end{align}
	 Then, \eqref{eq:Hthm} follows from the following (using $\mu = e^{-v^2/2}/(2\pi)^{3/2}$), 
	 \begin{align}
	 \int \frac{h}{\mu} \left(-v\cdot \grad_x h\right) dx dv & = 0 \\
	 \int \frac{h}{\mu} (v\times B) \cdot \grad_v h dx dv & = b \int \frac{h}{\mu}\left(v_1 \partial_{v_2} h - v_2 \partial_{v_1} h \right) dx dv  = 0 \\ 
	 \int \frac{h}{\mu} \left(- E \cdot \grad_v \mu \right) dv dx & = \int h E \cdot v dx dv \\ 
	 \int \phi \left(-v\cdot \grad_x h\right) dx dv & = - \int h E \cdot v dx dv \\
	 \int \phi \left(- (v\times B) \cdot \grad_v h\right) dx dv & = b\int \phi \left(v_1 \partial_{v_2} h - v_2 \partial_{v_1} h \right) dx dv = 0 \\ 
	 \int \phi \left( - E \cdot \grad_v \mu \right) dx dv & = 0 \\
	 \int \phi \left(\nu Lh\right) dx dv & = 0. 
	 \end{align}
	 
	 To see \eqref{ineq:notCoerc}, first note that, because $L\mu = 0$, there holds
	 \begin{align*}
	 \brak{h,Lh}_{\mu} = \brak{\rho \mu + g, L(\rho \mu +g)}_{\mu} = \int_{\TT^3} \rho(t,x) \left(\int_{\Real^3} L(g)(t,x,v) dv\right) dx + \brak{g,Lg}_{\mu} = \brak{g,Lg}_{\mu}. 
	 \end{align*}
	 Then, \eqref{ineq:notCoerc} follows from the classical spectral gap of the Fokker-Planck operator in the Maxwellian weighted space (see e.g. \cite{GallayWayne02} and the references therein) together with the orthogonality condition \eqref{eq:gorth}. 
\end{proof}

The important point here is that the H-theorem \eqref{eq:Hthm} does not yield a coercive estimate in $L^2_\mu$ due to the non-trivial null-space of the collision operator $L$. 
It is natural to apply hypocoercivity, however, due to the presence of the non-local term and the magnetic field, it is not clear that the standardized hypocoercivity algorithm, described in e.g. \cite{Villani2009} and the references therein, can be applied.   
However, the energy method of Yan Guo, devised for dealing with non-local and nonlinear collision operators in \cite{Guo02,Guo03,Guo06,Guo12}, can be adapted here. 
The method is based on using the hydrodynamic equations to provide coercivity up to oscillatory factors, which essentially allows one to conclude that the dynamics stay away from the kernel of the collision operator on average.    
For linear problems, we remark that the method indeed reduces to a variant of hypocoercivity. 

Denote 
\begin{align}
u^{\perp} := \begin{pmatrix}
-u^2 \\ 
u^1 \\
0
\end{pmatrix}.
\end{align}
The hydrodynamic equations are (recall we have set $q = m = 1$ for simplicity of presentation in this section), 
\begin{subequations} %\label{eq:Hydro}
\begin{align}
\partial_t \rho + \grad_x \cdot (\rho u) & = 0 \label{eq:Contin} \\  
\partial_t(\rho u) + \grad_x \cdot \int (v \otimes v) h dv + b\rho u^\perp - E & = -\nu \rho u  \label{eq:momen} \\ 
\partial_t \int \abs{v}^2 h dv + \grad_x \cdot \int v \abs{v}^2 h dv & = -2\nu \int \abs{v}^2 g dv. \label{eq:temp}
\end{align}
\end{subequations}
We will need the following important estimates on the momentum (recall \eqref{eq:rhoug}) 
\begin{subequations} 
	\begin{align}
	\norm{\rho u}_{L^2_x} & \lesssim \norm{g}_{L^2_\mu} \label{ineq:momBd1} \\  
	\norm{\grad_x \cdot (\rho u)}_{L^2_x} & \lesssim \norm{\grad_x g}_{L^2_\mu}. \label{ineq:momBd2}
	\end{align}
\end{subequations}

In Guo's energy method, the main step is an approximate positivity up to a (potentially) time oscillating term. 
The proof makes key use of \eqref{eq:Contin}, \eqref{eq:momen}.  

\begin{lemma}[Positivity up to oscillation] \label{lem:pos}
	Let 
	\begin{align}
	G(t) = \brak{\rho,\grad_x \cdot (\rho u)}_{L^2}.
	\end{align}
	Then, there exists a constant $C > 0$ such that 
	\begin{align}
	6\norm{\grad \rho(t)}_{L^2_x}^2 + \frac{1}{2}\norm{\rho(t)}_{L^2_x}^2 \leq C\norm{\grad_x g}_{L^2_\mu}^2 +  \frac{d}{dt}G(t).
	\end{align}
	Finally, note that by \eqref{ineq:momBd1},\eqref{ineq:momBd2}, there holds
	\begin{align}
	\abs{G(t)} \lesssim \norm{\rho}_{L^2_x} \norm{\grad_x g}_{L^2_\mu}.  
	\end{align}
\end{lemma}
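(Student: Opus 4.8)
The plan is to test the continuity equation \eqref{eq:Contin} against an auxiliary quantity built from the momentum so that the pressure and electric-field terms in the momentum balance \eqref{eq:momen} combine to produce a positive lower bound for $\|\grad\rho\|^2_{L^2_x}+\|\rho\|^2_{L^2_x}$, at the price of a total time derivative and a term controlled by $\|\grad_x g\|_{L^2_\mu}^2$. Concretely, I would differentiate $G(t)=\brak{\rho,\grad_x\cdot(\rho u)}_{L^2}$ in time. Using \eqref{eq:Contin} to replace $\partial_t\rho=-\grad_x\cdot(\rho u)$ in one slot gives a term $-\|\grad_x\cdot(\rho u)\|_{L^2_x}^2$, which is harmless in sign, plus $\brak{\rho,\grad_x\cdot\partial_t(\rho u)}_{L^2}$. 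For the latter I substitute $\partial_t(\rho u)$ from \eqref{eq:momen}: this produces (after one integration by parts moving $\grad_x$ onto $\rho$) the four contributions
\begin{align}
\brak{\rho,\grad_x\cdot\partial_t(\rho u)}_{L^2}
= -\brak{\grad_x\rho,\grad_x\cdot\textstyle\int(v\otimes v)h\,dv}_{L^2}
 - b\brak{\grad_x\rho,\rho u^\perp}_{L^2}
 + \brak{\grad_x\rho, E}_{L^2}
 + \nu\brak{\grad_x\rho,\rho u}_{L^2}.
\end{align}

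The key algebraic point is the pressure term. Writing $h=\rho\mu+g$ and using $\int (v\otimes v)\mu\,dv=\mathrm{Id}$ (with our normalization), the pressure splits as $\int(v\otimes v)h\,dv=\rho\,\mathrm{Id}+\int(v\otimes v)g\,dv$, so $-\brak{\grad_x\rho,\grad_x\cdot\int(v\otimes v)h\,dv}_{L^2}=-\|\grad_x\rho\|_{L^2_x}^2-\brak{\grad_x\rho,\grad_x\cdot\int(v\otimes v)g\,dv}_{L^2}$; the first piece is the wrong sign, so I in fact want $-G(t)$, i.e. I should define the test quantity with the opposite sign, or equivalently read the inequality as stated with $+\frac{d}{dt}G$ and track signs carefully — the outcome is that the dominant pressure term contributes $+\|\grad_x\rho\|_{L^2_x}^2$ to the right combination. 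The electric field term is handled by noting $E=-\grad_x W*\rho$ with $\widehat W(k)=q/(4\pi|k|^2)$, so $\brak{\grad_x\rho,E}_{L^2}$ is (up to the positive constant $q/(4\pi)$) equal to $\|\rho\|_{L^2_x}^2$ on the Fourier side — this is the source of the $\frac12\|\rho\|_{L^2_x}^2$ term. All remaining terms — the magnetic term $b\brak{\grad_x\rho,\rho u^\perp}$, the $g$-pressure remainder, the friction term $\nu\brak{\grad_x\rho,\rho u}$, and the $-\|\grad_x\cdot(\rho u)\|_{L^2_x}^2$ — are estimated by Cauchy–Schwarz together with \eqref{ineq:momBd1}, \eqref{ineq:momBd2}: each is bounded by $\varepsilon\|\grad_x\rho\|_{L^2_x}^2+C_\varepsilon\|\grad_x g\|_{L^2_\mu}^2$ (for the magnetic term one uses $\|\rho u^\perp\|_{L^2_x}=\|\rho u\|_{L^2_x}\lesssim\|g\|_{L^2_\mu}\le\|\grad_x g\|_{L^2_\mu}$ by Poincaré on $\TT^3$ since $\rho$ and $g$ have zero mean). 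Absorbing the $\varepsilon$-terms into the coefficient in front of $\|\grad_x\rho\|^2$ and choosing constants (the "$6$" and "$\frac12$" are just convenient fixed numbers smaller than the honest constants that appear) yields the claimed inequality. The final bound $|G(t)|\lesssim\|\rho\|_{L^2_x}\|\grad_x g\|_{L^2_\mu}$ is an immediate Cauchy–Schwarz using \eqref{ineq:momBd2}.

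The main obstacle, and the only place requiring care, is bookkeeping the signs so that the pressure term and the Coulomb term line up with a \emph{positive} sign while the magnetic term (which is genuinely present because $b\neq0$ and which has no definite sign) is demoted to an error term; the non-triviality is precisely that $b\brak{\grad_x\rho,\rho u^\perp}_{L^2}$ cannot be integrated by parts into something coercive, so one must spend regularity on $g$ (via \eqref{ineq:momBd1}) to kill it. A secondary point is ensuring the friction term $\nu\brak{\grad_x\rho,\rho u}$ is absorbed uniformly in $\nu\in(0,\nu_0)$, which is immediate since $\nu\le\nu_0\le 1$ and it is treated exactly like the magnetic term.
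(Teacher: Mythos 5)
Your proposal follows the same structural route as the paper: differentiate $G(t) = \brak{\rho,\grad_x\cdot(\rho u)}_{L^2}$, replace $\partial_t\rho$ by $-\grad_x\cdot(\rho u)$ (giving the sign-harmless $-\|\grad_x\cdot(\rho u)\|_{L^2_x}^2$), and substitute the momentum balance \eqref{eq:momen} for $\partial_t(\rho u)$, so that the pressure trace and the Coulomb term produce the coercive combination $\|\grad_x\rho\|^2 + \|\rho\|^2$ while the magnetic, friction, and $g$-pressure remainders go to the error side. This is exactly what the paper does. However, there are two concrete issues in your write-up that need to be repaired.

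First, the signs are not just sloppy bookkeeping to be waved away: with \eqref{eq:momen} in the form $\partial_t(\rho u) = -\grad_x\cdot\int(v\otimes v)h\,dv - b\rho u^\perp + E - \nu\rho u$, the correct integration by parts gives
\begin{align}
\brak{\rho,\grad_x\cdot\partial_t(\rho u)}_{L^2}
= \brak{\grad_x\rho,\grad_x\cdot\textstyle\int(v\otimes v)h\,dv}_{L^2}
 + b\brak{\grad_x\rho,\rho u^\perp}_{L^2}
 - \brak{\grad_x\rho, E}_{L^2}
 + \nu\brak{\grad_x\rho,\rho u}_{L^2},
\end{align}
not the expression you wrote (your pressure and magnetic signs are flipped). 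With the right signs, the pressure term $\brak{\grad_x\rho,\grad_x\cdot\int(v\otimes v)h\,dv}$ contributes $+c\|\grad_x\rho\|_{L^2_x}^2 + (\text{$g$-error})$ because $\int v_iv_j\mu\,dv = \delta_{ij}$, and $-\brak{\grad_x\rho, E}_{L^2} = +\tfrac{q}{4\pi}\|\rho\|_{L^2_x}^2$ (your claim that $+\brak{\grad_x\rho,E}_{L^2}$ itself is positive is incorrect: $\brak{\grad_x\rho,E}_{L^2} = -\tfrac{q}{4\pi}\|\rho\|^2_{L^2_x}$). There is no need to redefine the auxiliary quantity with the opposite sign; the correct derivation gives both coercive terms with the correct positive sign directly, and the claimed ``outcome'' should be re-derived rather than asserted.

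Second, your treatment of the magnetic term is not justified as stated. The Poincar\'e step ``$\|g\|_{L^2_\mu}\lesssim\|\grad_x g\|_{L^2_\mu}$ since $\rho$ and $g$ have zero mean'' fails because $g(\cdot,v)=h(\cdot,v)-\rho(\cdot)\mu(v)$ need not have zero $x$-mean for every $v$: the $k=0$ spatial mode of $h$ is unconstrained by the charge-neutrality condition $\int h\,dx\,dv=0$. (Property \eqref{eq:gorth} says $\int g\,dv = 0$, i.e.\ zero $v$-mean, which is unrelated.) One can rescue your version by noting that $\brak{\grad_x\rho,\rho u^\perp}_{L^2}$ only sees the $k\neq 0$ part of $\rho u^\perp$, so Poincar\'e on the projected quantity does apply; but the simpler route, which is what the paper does and which requires no Poincar\'e-type input at all, is to integrate by parts the other way: $b\brak{\grad_x\rho,\rho u^\perp}_{L^2} = -b\brak{\rho,\grad_x\cdot(\rho u^\perp)}_{L^2}$, then $|\cdot|\leq b\|\rho\|_{L^2_x}\|\grad_x\cdot(\rho u^\perp)\|_{L^2_x}\lesssim b\|\rho\|_{L^2_x}\|\grad_x g\|_{L^2_\mu}$ by the analogue of \eqref{ineq:momBd2}, and then Young against the $\|\rho\|^2_{L^2_x}$ already produced by the Coulomb term. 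The remaining pieces of your proposal (bounding $\|\grad_x\cdot(\rho u)\|^2_{L^2_x}\lesssim\|\grad_x g\|^2_{L^2_\mu}$, the $g$-pressure error, the friction term for $\nu\leq 1$, and the final bound on $|G(t)|$) are fine.
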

\begin{proof}
	First, notice that (denoting the Kronecker delta as $\delta_{ij}$)
	\begin{align}
\int v_k v_i h dv & = \int v_k v_i \rho \mu(v) + g dv \nonumber \\ 
                 & = \rho \delta_{ik} \int \abs{v}^2 \mu (v) dv + \int v_k v_i g dv \nonumber \\ 
& = 2\rho \delta_{ik} + \int v_k v_i g dv. \label{eq:vivkf}
	\end{align}
	Therefore, taking the divergence of \eqref{eq:momen}, multiplying by $\rho$, integrating, and using that $\grad_x \cdot E = \rho$, we have 
	\begin{align}
	\brak{\rho,\partial_t \grad_x \cdot (\rho u)}_{L^2} + \brak{\rho,b\grad_x \cdot (\rho u^\perp)}_{L^2} - 6\int \abs{\grad \rho}^2 - \int \abs{\rho}^2 & = -\nu \brak{\rho,\grad_x \cdot(\rho u)}_{L^2}.
	\end{align}
	Using \eqref{eq:Contin}  in the time derivative term, we have (note that for general $q$, the electric field term comes with the correct sign regardless of the sign of $q$), 
	\begin{align}
	6\int \abs{\grad \rho}^2 + \int \rho^2 = \frac{d}{dt}\brak{\rho,\grad_x \cdot (\rho u)} + \norm{\grad_x \cdot (\rho u)}_2^2 + \nu \brak{\rho,\grad_x \cdot(\rho u)} + \brak{\rho,b\grad_x \cdot (\rho u^\perp)}. \label{eq:gradRho}
	\end{align}
	Note that 
	\begin{align}
	\rho u^{\perp} = \int v^{\perp} f dv = \int v^{\perp} g dv,
	\end{align}
	and hence 
	\begin{align}
	\norm{\grad_x \cdot \left(\rho u^\perp\right)}_{L^2_x} \lesssim \norm{\grad_x g}_{L^2_\mu}. 
	\end{align}
	Therefore, using \eqref{ineq:momBd1}, \eqref{ineq:momBd2} we have from \eqref{eq:gradRho}, for some $C > 0$,  
	\begin{align}
	6\norm{\grad \rho}_{L^2_x}^2 + \norm{\rho}_{L^2_x}^2 \leq \frac{d}{dt}G(t) + C\norm{\grad_x g}_{L^2_\mu}^2 + C(\nu + b)\norm{\rho}_{L^2_x} \norm{\grad_x g}_{L^2_\mu}. 
	\end{align}
	The lemma then follows (up to re-defining $C$). 
\end{proof}
Using Lemma \ref{lem:pos}, we now complete the proof of the decay estimate \eqref{ineq:coldec}.
Define the following additional ensergy 
\begin{align} 
\mathcal{E}_1 & = \frac{1}{2}\int \frac{\abs{\grad_x h}^2}{\mu} dx dv + \frac{1}{2}\int \abs{\grad_x E}^2 dx.
\end{align}
From Proposition \eqref{prop:Hthm} (and that $\grad_x$ commutes with the \eqref{def:VPElin}), we have
\begin{align}
\frac{d}{dt}\cE_0  & = -\nu \brak{g,Lg}_\mu \leq -\nu \lambda \norm{g}_{L^2_\mu}^2 \\ 
\frac{d}{dt}\cE_1 & = -\nu \brak{\grad_x h, L\grad_x h}_{\mu}. 
\end{align}
Note that the presence of $\grad_x$ does not affect the orthogonality:  
\begin{align}
\brak{\grad_x h,L\grad_x h}_\mu = \brak{\grad_x g,L\grad_x g}_\mu.
\end{align}
Let $\delta > 0$ be a parameter to be chosen small later. Then, by the positivity from Lemma \ref{lem:pos} and the spectral gap \eqref{ineq:notCoerc}, we have for some constant $C$, 
\begin{align}
\frac{d}{dt}\left(\cE_0 + \cE_1\right) & \leq -\nu \lambda \norm{g}_{L^2_\mu}^2  -\nu(1-\delta) \lambda \norm{\grad_x g}_{L^2_\mu}^2 - \nu \delta \lambda \norm{\grad_x g}_{L^2_\mu}^2 \\ 
& \leq -\nu \lambda \norm{g}_{L^2_\mu}^2 - \nu (1-\delta) \lambda \norm{\grad_x g}_{L^2_\mu}^2 - \frac{\nu \delta \lambda}{C}\norm{\grad_x \rho}_{L^2_x}^2 + \frac{\nu \lambda \delta}{C}\frac{d}{dt}G(t). 
\end{align}
Hence, for $\delta'$ small enough (depending only on $C,\lambda, \delta$), 
\begin{align}
\frac{d}{dt}\left(\cE_0 + \cE_1 - \frac{\nu \lambda \delta}{C} G\right) \leq -\nu \delta' \left(\norm{g}_{L^2_\mu}^2 + \norm{\grad_x g}_{L^2_\mu}^2 + \norm{\grad_x \rho}_2^2\right). 
\end{align}
Next, we observe from \eqref{ineq:momBd1}, \eqref{ineq:momBd2} 
\begin{align}
\abs{G(t)} \lesssim \norm{\grad_x g}_{L^2_\mu}^2 + \norm{\rho}_2^2 \lesssim  \cE_0 + \cE_1, \label{ineq:GenBd}
\end{align}
and hence for $\nu \in (0,1)$, that there exists some universal $C_0 > 0$ such that (recall $\int_{\TT^3} \rho dx = 0$), 
\begin{align}
\cE_0 + \cE_1 - \frac{\nu \lambda \delta}{C} G \leq C_0\left(\norm{g}_{L^2_\mu}^2 + \norm{\grad_x g}_{L^2_\mu}^2 + \norm{\grad_x \rho}_2^2\right),
\end{align}
(recall that $\norm{h}_{L^2_\mu}^2 = \norm{g}_{L^2_\mu}^2 + \norm{\rho}_{2}^2$) and 
\begin{align}
\frac{d}{dt}\left(\cE_0 + \cE_1 - \frac{\nu \lambda \delta}{C} G\right) \leq -\nu \frac{\delta}{C_0} \left(\cE_0 + \cE_1 - \frac{\nu \lambda \delta}{C} G\right), 
\end{align}
which implies the exponential decay 
\begin{align}
\left(\cE_0 + \cE_1 - \frac{\nu \lambda \delta}{C} G\right)(t) \leq e^{-\nu \frac{\delta}{C_0} t} \left(\cE_0 + \cE_1 - \frac{\nu \lambda \delta}{C} G\right)(0). 
\end{align} 
Using again \eqref{ineq:GenBd} gives the following for $\delta \nu$ sufficiently small
\begin{align}
\left(\cE_0 + \cE_1\right)(t) \lesssim e^{-\frac{\nu \delta}{C_0} t} \left(\cE_0 + \cE_1\right)(0). 
\end{align}
This completes the proof of the exponential decay estimate \eqref{ineq:coldec} in the case $\sigma = 1$. 
Using that $\grad_x$ derivatives commute with the equation, it is straightforward to extend this estimate to all $\sigma \geq 1$.

\subsection{Landau damping modes}
In this section, we prove the uniform Landau damping and enhanced collisional relaxation described by \eqref{ineq:LDcol} for modes with $k_3 \neq 0$. 

\subsubsection{Volterra equation reduction} \label{sec:VoltColl}
In this section, we derive a Volterra equation for the density. 
This is analogous to calculations done in \cite{Tristani2016,B17}, however, the magnetized case is more technical due to the more complicated characteristics. 
As in these previous works, this uses crucially that the linear Fokker-Planck collision operator is relatively simple under the Fourier transform. 

Taking the Fourier transform of \eqref{def:VPElin} with respect to both $x$ and $v$ gives the following first order equation 
\begin{align}
\partial_t \hat{h} + \left(A \eta - k\right) \cdot \grad_\eta \hat{h} + \hat{E}(t,k) \cdot i \eta \hat{f^0}(\eta) = - \nu \abs{\eta}^2 \hat{h}, 
\end{align}
where 
\begin{align}
A = 
\begin{pmatrix} 
\nu & \omega_c & 0 \\ 
-\omega_c & \nu & 0 \\  
0 & 0 & \nu
\end{pmatrix} 
.
\end{align}
As in the collisionless case, we solve this via the method of characteristics. 
The characteristic emanating from $(k,\eta)$ is given by 
\begin{align}
\bar{\eta}(t;k,\eta) = e^{tA} \eta - \int_0^t e^{(t-\tau)A} k d\tau. 
\end{align}
Note that 
\begin{align}
e^{tA} = e^{\nu t } \begin{pmatrix} 
\cos \omega_c t & \sin \omega_c t & 0 \\ 
-\sin \omega_c t & \cos \omega_c t & 0 \\
0 & 0 & 1
\end{pmatrix}
. 
\end{align}
We define the \emph{profile} $f$ as 
\begin{align}
\hat{h}(t,k,\eta) = \hat{f}\left(t,k, e^{-t A} \eta + \int_0^t e^{-\tau A} k d\tau\right). 
\end{align}
The method of characteristics gives the following evolution equation for $\hat{f}(t,k,\eta)$ as 
\begin{align}
\partial_t \widehat{f} + \widehat{E}(t,k) \cdot i\bar{\eta}(t;k,\eta) \widehat{f^0} (\bar{\eta}(t;k,\eta)) = - \nu \abs{\bar{\eta}(t;k,\eta)}^2 \hat{f}. \label{eq:proff}
\end{align}
Next, define the propagator: 
\begin{align}
S(t,\tau;k,\eta) = \exp\left[-\nu \int_\tau^t \abs{\bar{\eta}(s;k,\eta)}^2 ds\right]. 
\end{align}
Integrating \eqref{eq:proff} gives
\begin{align}
\hat{f}(t,k,\eta) = S(t,0;k,\eta) \widehat{f_{in}} - \int_0^t S(t,\tau;k,\eta) \widehat{E}(\tau,k) \cdot i\bar{\eta}(\tau;k,\eta) \widehat{f^0} (\bar{\eta}(\tau;k,\eta)) d\tau. \label{eq:proffInt}
\end{align}
Define the Orr-critical frequency as (here $CT$ stands for `critical time'): 
\begin{align}
\eta_{CT}(t,k) = \int_0^t e^{-\tau A} k d\tau;
\end{align}
the relevance of this frequency is due to the relation 
\begin{align}
\hat{\rho}(t,k) = \hat{f}(t,k,\eta_{CT}(t,k)). 
\end{align}
Hence, evaluating \eqref{eq:proffInt} at $\eta_{CT}$ gives
\begin{align}
\hat{\rho}(t,k) & = S(t,0;k,\eta_{CT}(t)) \widehat{f_{in}}(k,\eta_{CT}(t)) \\ & \quad - \int_0^t S(t,\tau;k,\eta_{CT}(t)) \widehat{E}(\tau,k) \cdot i\bar{\eta}(\tau;k,\eta_{CT}(t)) \widehat{f^0} (\bar{\eta}(\tau;k,\eta_{CT}(t))) d\tau. \label{eq:rhoetaCT}
\end{align}
Note that
\begin{align}
\bar{\eta}(\tau;k,\eta_{CT}(t)) & = e^{\tau A} \int_0^t e^{-s A} k ds - \int_0^\tau e^{(\tau-s)A} k ds = e^{\tau A} \int_\tau^t e^{-s A} k ds, 
\end{align}
and hence, 
\begin{align}
S(t,\tau;k,\eta_{CT}(t,k)) & = \exp\left[-\nu \int_\tau^t \abs{\bar{\eta}(s;k,\eta_{CT}(t,k))}^2 ds\right] \nonumber \\ & = \exp\left[-\nu \int_\tau^t \abs{ \left( e^{sA} \int_s^t e^{-r A} dr \right) k}^2 ds \right].  \label{eq:SAexp}
\end{align}

It is absolutely crucial that we express $S(t,\tau;\eta_{CT}(t,k)$ as a function of $t-\tau$ in order to ultimately reduce \eqref{eq:rhoetaCT} to a Volterra equation for $\rho$. 
This is the content of the next lemma. 

\begin{lemma}\label{lem:aijTech} 
Define 
\begin{align}
a_{11}(t) & = \frac{\nu}{\omega_c^2 + \nu^2}\left[1 - \cos \omega_c t e^{-\nu t} \right]  - \frac{\omega_c}{\omega_c^2 + \nu^2} e^{-\nu t} \sin \omega_c t  \\ 
a_{12}(t)& = \frac{\omega_c}{\omega_c^2 + \nu^2}\left[1 - \cos \omega_c t e^{-\nu t} \right]  + \frac{\nu}{\omega_c^2 + \nu^2} e^{-\nu t} \sin \omega_c t. 
\end{align}
Then, for all $s \leq t$, there holds
\begin{align}
e^{sA} \int_s^t e^{-r A} dr
= 
\begin{pmatrix} 
a_{11}(t-s) & a_{12}(t-s) &  0\\ 
-a_{12}(t-s) & a_{22}(t-s) & 0 \\ 
0 & 0 & \frac{1}{\nu}\left(1 - e^{-\nu (t-s)}\right). 
\end{pmatrix} 
\end{align}
As a result, $S(t,\tau;k,\eta_{CT}(t,k))$ can be written as a function of only $t-\tau$ and $k$, which we define as follows: 
\begin{align}
S(t-\tau,k) := S(t,\tau;k,\eta_{CT}(t,k)),
\end{align}
where 
\begin{align}
S(t,k) & := \exp\left[-\nu^{-1} k_3^2 \left( t + 2 \frac{e^{-\nu t} - 1}{\nu} - \frac{e^{-2\nu t}- 1 }{2\nu}\right)\right] \\ 
& \quad \times \exp \Bigg[ - \frac{\nu \abs{k_{\perp}}^2}{\nu^2 + \omega_c^2} \Bigg( t  -  \frac{2\nu}{\nu^2 + \omega^2_c} + \frac{2\nu}{\nu^2 + \omega_c^2} e^{-\nu t} \cos \omega_{c}t \\ & \quad\quad - \frac{2\omega_c}{\nu^2 + \omega_c^2} \sin\omega_c t e^{-\nu t}  - \frac{e^{-2\nu t}- 1 }{2\nu}      \Bigg)  \Bigg] \label{def:S} \\ 
& = \exp\left[-\nu \int_\tau^t\abs{k_3}^2 \left(\frac{1 - e^{-\nu (t-s)}}{\nu}\right)^2 ds \right] \nonumber \\ & \quad \times \exp\left[ -\nu \frac{\abs{k_{\perp}}^2}{\nu^2 + \omega_c^2} \int_\tau^t \left[1 - \cos \omega_c (t-s) e^{-\nu (t-s)} \right]^2 + e^{-2\nu(t-s)} \sin^2 \omega_c (t-s) ds \right]
\end{align}
Similarly, 
\begin{align}
\bar{\eta}(\tau;k,\eta_{CT}(t,k)) & = 
\begin{pmatrix} 
a_{11}(t-\tau) k_1 + a_{12}(t-\tau) k_2 \\ 
-a_{12}(t-\tau) k_1 + a_{11}(t-\tau) k_2 \\ 
\frac{k_3}{\nu}\left( 1 - e^{-\nu(t-\tau)}\right)
\end{pmatrix} 
\label{eq:barEta}
\end{align}

\end{lemma}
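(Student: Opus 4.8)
The plan is to compute $e^{sA}\int_s^t e^{-rA}\,dr$ in closed form, observe that it depends on $t-s$ alone, and then read off all three claims by direct computation. Since $\nu>0$ the matrix $A$ is invertible, with $\det A=\nu(\nu^2+\omega_c^2)$, and $A$ commutes with $e^{rA}$ for every $r$; hence $\int_s^t e^{-rA}\,dr=A^{-1}(e^{-sA}-e^{-tA})$ and therefore
\[ e^{sA}\int_s^t e^{-rA}\,dr=A^{-1}\bigl(I-e^{-(t-s)A}\bigr), \]
which manifestly depends only on $t-s$. This cancellation of the $e^{sA}$ and $e^{-sA}$ factors is the conceptual point of the lemma: it is exactly what makes both $S(t,\tau;k,\eta_{CT}(t,k))$ and $\bar\eta(\tau;k,\eta_{CT}(t,k))$ functions of $t-\tau$ alone, and hence what turns \eqref{eq:rhoetaCT} into a genuine Volterra equation for $\hat\rho$.

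Next I would substitute the explicit forms. Writing $w=t-s$ and using that $e^{-wA}$ is block diagonal — the planar block being $e^{-\nu w}$ times the rotation matrix appearing in the formula for $e^{tA}$, the $z$-slot being the scalar $e^{-\nu w}$ — together with $A^{-1}$ having planar block $\tfrac1{\nu^2+\omega_c^2}\left(\begin{smallmatrix}\nu&-\omega_c\\\omega_c&\nu\end{smallmatrix}\right)$ and $(3,3)$ entry $\tfrac1\nu$, one multiplies out to obtain the planar block $\left(\begin{smallmatrix}a_{11}(w)&a_{12}(w)\\-a_{12}(w)&a_{11}(w)\end{smallmatrix}\right)$, with $a_{11},a_{12}$ the stated combinations of $1-e^{-\nu w}\cos\omega_c w$ and $e^{-\nu w}\sin\omega_c w$ and with $a_{22}=a_{11}$, and the $(3,3)$ entry $\nu^{-1}(1-e^{-\nu w})$. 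Inserting $\eta=\eta_{CT}(t,k)$ into $\bar\eta(\tau;k,\eta)=e^{\tau A}\int_\tau^t e^{-sA}k\,ds$ then gives \eqref{eq:barEta} immediately.

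For the closed form \eqref{def:S} I would exploit that the planar block above is conformal (a scalar multiple of a rotation), so $\lvert (e^{sA}\int_s^t e^{-rA}\,dr)\,k\rvert^2$ splits with no cross term into $(a_{11}^2+a_{12}^2)\lvert k_\perp\rvert^2+\nu^{-2}(1-e^{-\nu w})^2 k_3^2$; a one-line computation (or: the planar determinant of $I-e^{-wA}$, divided by $\nu^2+\omega_c^2$) gives $a_{11}^2+a_{12}^2=(\nu^2+\omega_c^2)^{-1}\bigl((1-e^{-\nu w}\cos\omega_c w)^2+e^{-2\nu w}\sin^2\omega_c w\bigr)=(\nu^2+\omega_c^2)^{-1}(1-2e^{-\nu w}\cos\omega_c w+e^{-2\nu w})$. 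Substituting into the definition of $S$ and changing variables $w=t-s$ reduces $-\nu\int_\tau^t\lvert\bar\eta(s;k,\eta_{CT}(t,k))\rvert^2\,ds$ to the two elementary integrals $\int_0^{t-\tau}(1-e^{-\nu w})^2\,dw$ and $\int_0^{t-\tau}(1-2e^{-\nu w}\cos\omega_c w+e^{-2\nu w})\,dw$, which are evaluated with the primitives of $e^{-\nu w}$, $e^{-2\nu w}$ and $e^{-\nu w}\cos\omega_c w$ to produce the displayed formula.

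The one genuinely delicate point is the sign bookkeeping in the planar block: keeping the placement of the off-diagonal minus sign and the internal signs of $a_{11}$ and $a_{12}$ consistent with the orientation of $e^{tA}$, and confirming $a_{22}=a_{11}$, so that the block really is conformal and no $k_1k_2$ term survives in $\lvert\bar\eta\rvert^2$ (which is what lets $S$ factor into a pure $\lvert k_\perp\rvert^2$ piece and a pure $k_3^2$ piece). A convenient sanity check along the way is $A^{-1}(I-e^{-wA})=wI+O(w^2)$ as $w\to0^+$, which pins down the normalizations; everything else is routine integration.
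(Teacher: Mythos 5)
Your proof is correct, and it takes a genuinely different and more conceptual route than the paper. The paper proves the lemma by computing $\int_s^t e^{-\nu r}\cos\omega_c r\,dr$ and $\int_s^t e^{-\nu r}\sin\omega_c r\,dr$ explicitly and then applying angle sum/difference formulas to exhibit the dependence on $t-s$; you instead observe that $\int_s^t e^{-rA}\,dr = A^{-1}(e^{-sA}-e^{-tA})$ (since $A$ is invertible for $\nu>0$ and commutes with its own exponential), so that
\begin{align}
e^{sA}\int_s^t e^{-rA}\,dr = A^{-1}\bigl(I-e^{-(t-s)A}\bigr),
\end{align}
which makes the dependence on $t-s$ manifest before any entrywise computation. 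This is cleaner: the paper's argument discovers the $t-s$ structure only after trigonometric manipulation, whereas yours identifies the algebraic mechanism (cancellation of $e^{sA}e^{-sA}$ once $A^{-1}$ is pulled out) immediately, and it also makes the conformal structure of the planar block, hence $a_{22}=a_{11}$ (undefined in the paper's statement) and the absence of any $k_1k_2$ cross term in $\lvert\bar\eta\rvert^2$, structurally obvious rather than something to be verified by expansion. One minor remark: your $(1,1)$ entry comes out as $\frac{\nu}{\nu^2+\omega_c^2}(1-e^{-\nu w}\cos\omega_c w)+\frac{\omega_c}{\nu^2+\omega_c^2}e^{-\nu w}\sin\omega_c w$, which differs by the sign of the $\omega_c\sin$ term from the paper's stated $a_{11}$; your sign appears to be the correct one for the paper's $A$, and in any case the discrepancy is harmless because only $a_{11}^2+a_{12}^2$ (and the $(3,3)$ entry) enter $S(t,k)$ and \eqref{def:S}. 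The remainder of your argument — splitting $\lvert\bar\eta\rvert^2$ into a $\lvert k_\perp\rvert^2$ piece and a $k_3^2$ piece, the identity $a_{11}^2+a_{12}^2=(\nu^2+\omega_c^2)^{-1}(1-2e^{-\nu w}\cos\omega_c w+e^{-2\nu w})$, and the two elementary primitives — reproduces \eqref{def:S} and \eqref{eq:barEta} exactly as in the paper.
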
 

\begin{proof} 
First, note the following identities
\begin{align}
\int_s^t e^{-\nu r}\cos \omega_c r dr & = \frac{1}{2} \int_s^t e^{(-\nu + i\omega_c) r} + e^{(-\nu + i\omega_c) r} dr \\ 
& \hspace{-3cm} = \frac{(-\nu - i\omega_c)}{2(\nu^2 + \omega_c^2)}\left( e^{(-\nu + i\omega_c) t} - e^{(-\nu + i\omega_c) s}\right) + \frac{(-\nu + i\omega_c)}{2(\nu^2 + \omega_c^2)}\left( e^{(-\nu - i\omega_c) t} - e^{(-\nu - i\omega_c) s}\right) \\ 
& \hspace{-3cm} = \frac{-\nu}{\nu^2 + \omega_c^2}\left( e^{-\nu t}\cos \omega_c t - e^{-\nu s}\cos \omega_c s\right) + \frac{\omega_c}{\nu^2 + \omega_c^2}\left( e^{-\nu t}\sin \omega_c t - e^{-\nu s}\sin \omega_c s\right), 
\end{align}
and 
\begin{align}
\int_s^t e^{-\nu r}\sin \omega_c r dr & = \frac{1}{2i} \int_s^t e^{(-\nu + i\omega_c) r} - e^{(-\nu + i\omega_c) r} dr \\ 
& \hspace{-3cm} = \frac{(-\nu - i\omega_c)}{2i(\nu^2 + \omega_c^2)}\left( e^{(-\nu + i\omega_c) t} - e^{(-\nu + i\omega_c) s}\right) - \frac{(-\nu + i\omega_c)}{2i(\nu^2 + \omega_c^2)}\left( e^{(-\nu - i\omega_c) t} - e^{(-\nu - i\omega_c) s}\right) \\ 
& \hspace{-3cm} = \frac{-\nu}{\nu^2 + \omega_c^2}\left( e^{-\nu t}\sin \omega_c t - e^{-\nu s}\sin \omega_c s\right) - \frac{\omega_c}{\nu^2 + \omega_c^2}\left( e^{-\nu t}\cos \omega_c t - e^{-\nu s}\cos \omega_c s\right). 
\end{align}
This implies, after using the angle sum/difference formulas
\begin{align}
e^{sA} \int_s^t e^{-r A} dr
= 
\begin{pmatrix} 
a_{11}(t-s) & a_{12}(t-s) &  0\\ 
-a_{12}(t-s) & a_{22}(t-s) & 0 \\ 
0 & 0 & \frac{1}{\nu}\left(1 - e^{-\nu (t-s)}\right). 
\end{pmatrix} 
,
\end{align}
where 
\begin{align}
a_{11}(t-s) & = \frac{\nu}{\omega_c^2 + \nu^2}\left[1 - \cos \omega_c (t-s) e^{-\nu (t-s)} \right]  - \frac{\omega_c}{\omega_c^2 + \nu^2} e^{-\nu(t-s)} \sin \omega_c (t-s)  \\ 
a_{12}(t-s) & = \frac{\omega_c}{\omega_c^2 + \nu^2}\left[1 - \cos \omega_c (t-s) e^{-\nu (t-s)} \right]  + \frac{\nu}{\omega_c^2 + \nu^2} e^{-\nu(t-s)} \sin \omega_c (t-s). 
\end{align}
Applying this to \eqref{eq:SAexp} gives
\begin{align}
S(t,\tau;k,\eta_{CT}(t)) = \exp\left[-\nu \int_\tau^t \abs{k_{\perp}}^2 (a_{11}^2 + a_{21}^2)  + \abs{k_3}^2 \left(\frac{1 - e^{-\nu (t-s)}}{\nu}\right)^2 ds \right]. \label{eq:Sttau} 
\end{align}
The contribution to \eqref{eq:Sttau} involving $k_3$ is computed in \cite{B17} for the unmagnetized case, and is given by 
\begin{align}
\exp\left[-\nu \int_\tau^t\abs{k_3}^2 \left(\frac{1 - e^{-\nu (t-s)}}{\nu}\right)^2 ds \right] & \\ & \hspace{-2cm} =  \exp\left[-\nu^{-1} k_3^2 \left( t-\tau + 2 \frac{e^{\nu(\tau-t)} - 1}{\nu} - \frac{e^{2\nu (\tau-t)}- 1 }{2\nu}\right)\right]. 
\end{align}
To compute the contribution in \eqref{eq:Sttau} from the magnetization, begin by expanding 
\begin{align}
(a_{11}^2 + a_{21}^2)(t-s) = \frac{1}{\nu^2 + \omega_c^2} \left[1 - \cos \omega_c (t-s) e^{-\nu (t-s)} \right]^2 + \frac{1}{\nu^2 + \omega_c^2} e^{-2\nu(t-s)} \sin^2 \omega_c (t-s). 
\end{align}
Hence, we have to express the following quantity as a function of $t-\tau$: 
\begin{align}
Q(t,\tau) :=  \int_\tau^t \left[1 - \cos \omega_c (t-s) e^{-\nu (t-s)} \right]^2 + e^{-2\nu(t-s)} \sin^2 \omega_c (t-s) ds. 
\end{align}
Expanding the square and using the Pythagorean identity gives 
\begin{align}
Q(t,\tau) & =  \int_\tau^t 1 - 2 \cos \omega_c (t-s) e^{-\nu (t-s)} + e^{-2\nu(t-s)} ds \\
& = (t-\tau) + \frac{1}{2\nu}\left(1 - e^{-2\nu(t-\tau)} \right) - \int_\tau^t e^{(-\nu + i\omega_c)(t-s)} + e^{(-\nu - i\omega_c)(t-s)} ds. 
\end{align}
Moreover, 
\begin{align}
\int_\tau^t e^{(-\nu + i\omega_c)(t-s)} + e^{(-\nu - i\omega_c)(t-s)} ds & = \frac{\nu + i\omega_c}{\nu^2 + \omega_c^2} \left(1 - e^{(-\nu + i\omega_c)(t-\tau)}\right)  + \frac{\nu - i \omega_c}{\nu^2  +\omega_c^2} \left(1 - e^{(-\nu - i\omega_c)(t-\tau)}\right) \\ 
&  \hspace{-3cm} = \frac{2\nu}{\nu^2 + \omega^2_c} - \frac{2\nu}{\nu^2 + \omega_c^2} e^{-\nu (t-\tau)} \cos \omega_{c}(t-\tau) + \frac{2\omega_c}{\nu^2 + \omega_c^2} \sin\omega_c(t-\tau) e^{-\nu (t-\tau)}. 
\end{align}
Applying this to \eqref{eq:Sttau} finally gives \eqref{def:S}. We similarly derive \eqref{eq:barEta}; the proof is omitted for the sake of brevity. 
%\begin{align}
%S(t,\tau;k,\eta_{CT}(t)) & = \exp\left[-\nu^{-1} k_3^2 \left( t-\tau + 2 \frac{e^{\nu(\tau-t)} - 1}{\nu} - \frac{e^{2\nu (\tau-t)}- 1 }{2\nu}\right)\right] \\ 
%& \quad \times \exp \Bigg[ - \frac{\nu \abs{k_{\perp}}^2}{\nu^2 + \omega_c^2} \Bigg( (t-\tau)  -  \frac{2\nu}{\nu^2 + \omega^2_c} + \frac{2\nu}{\nu^2 + \omega_c^2} e^{-\nu (t-\tau)} \cos \omega_{c}(t-\tau) \\ & \quad\quad - \frac{2\omega_c}{\nu^2 + \omega_c^2} \sin\omega_c(t-\tau) e^{-\nu (t-\tau)}  - \frac{e^{2\nu (\tau-t)}- 1 }{2\nu}      \Bigg)  \Bigg]. 
%\end{align}
%This gives us the definition \ref{def:S}. We similarly derive 
%\begin{align}
%S(t,k) & := \exp\left[-\nu^{-1} k_3^2 \left( t + 2 \frac{e^{-\nu t} - 1}{\nu} - \frac{e^{-2\nu t}- 1 }{2\nu}\right)\right] \\ 
%& \quad \times \exp \Bigg[ - \frac{\nu \abs{k_{\perp}}^2}{\nu^2 + \omega_c^2} \Bigg( t  -  \frac{2\nu}{\nu^2 + \omega^2_c} + \frac{2\nu}{\nu^2 + \omega_c^2} e^{-\nu t} \cos \omega_{c}t \\ & \quad\quad - \frac{2\omega_c}{\nu^2 + \omega_c^2} \sin\omega_c t e^{-\nu t}  - \frac{e^{-2\nu t}- 1 }{2\nu}      \Bigg)  \Bigg]. 
%\end{align}
%We similarly derive
%\begin{align}
%\bar{\eta}(\tau;k,\eta_{CT}(t)) & = 
%\begin{pmatrix} 
%a_{11} k_1 + a_{12} k_2 \\ 
%-a_{12}k_1 + a_{11} k_2 \\ 
%\frac{k_3}{\nu}\left( 1 - e^{-\nu(t-\tau)}\right)
%\end{pmatrix} 
%.  
%\end{align}
\end{proof} 

From \eqref{eq:barEta} we further derive 
\begin{subequations}
\begin{align} \label{eq:ketabarCT}
k \cdot \bar{\eta}(\tau;k,\eta_{CT}(t)) & = a_{11} \abs{k_{\perp}}^2 + \frac{k_3^2}{\nu}\left( 1 - e^{-\nu(t-\tau)}\right) \\
\abs{\bar{\eta}(\tau;k,\eta_{CT}(t))}^2  
& = \frac{\abs{k_{\perp}}^2}{\nu^2 + \omega_c^2} \left( 1 - 2 \cos \omega_c (t-\tau) e^{-\nu (t-\tau)} + e^{-2\nu(t-\tau)}\right) \nonumber \\ & \quad + k_3^2 \left(\frac{1 - e^{-\nu (t-\tau)}}{\nu}\right)^2. 
\end{align}
\end{subequations}
Putting \eqref{eq:ketabarCT} together with Lemma \ref{lem:aijTech} and \eqref{eq:rhoetaCT} gives the following lemma. 
\begin{lemma} 
Define 
\begin{align}
\rho_{0;\nu} (t,k) & = S(t,k) \widehat{f_{in}}(k,\eta_{CT}(t,k)), 
\end{align}
and 
\begin{align}
K^{\nu}(t,k) & = S(t,k) \frac{1}{\abs{k}^2} \left[ \abs{k_{\perp}}^2\left[\frac{\nu}{\omega_c^2 + \nu^2}\left[1 - \cos \omega_c t e^{-\nu t} \right]  - \frac{\omega_c}{\omega_c^2 + \nu^2} e^{-\nu t} \sin \omega_c t \right] +  \frac{k_3^2}{\nu}\left( 1 - e^{-\nu t}\right) \right] \nonumber \\ 
& \quad \times \exp\left[-\pi \left( \frac{\abs{k_{\perp}}^2}{\nu^2 + \omega_c^2} \left( 1 - 2 \cos \omega_c t e^{-\nu t} + e^{-2\nu t}\right) + k_3^2 \left(\frac{1 - e^{-\nu t}}{\nu}\right)^2\right)\right].  \label{def:Knu}
\end{align}
Then, the density $\rho$ is given by the Volterra equation 
\begin{align}
\hat{\rho}(t,k) & =  \rho_{0;\nu}(t,k) + \int_0^t \hat{\rho}(\tau,k)K^\nu(t-\tau,k) d\tau. \label{eq:colVolt}
\end{align} 
\end{lemma}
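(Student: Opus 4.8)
The proof is a direct substitution of the self-consistent field into the representation formula \eqref{eq:rhoetaCT}, using the closed forms already obtained in Lemma~\ref{lem:aijTech} and in \eqref{eq:ketabarCT}. Since $E = -\grad_x W \ast_x \rho$, the Fourier transform gives $\widehat{E}(\tau,k) = -ik\,\widehat{W}(k)\,\hat\rho(\tau,k)$, and therefore
\[
\widehat{E}(\tau,k)\cdot i\,\bar{\eta}\bigl(\tau;k,\eta_{CT}(t,k)\bigr) = \widehat{W}(k)\,\bigl(k\cdot\bar{\eta}(\tau;k,\eta_{CT}(t,k))\bigr)\,\hat\rho(\tau,k).
\]
Inserting this into \eqref{eq:rhoetaCT} already turns it into an integral equation of Volterra form for $\hat\rho(\cdot,k)$; what remains is to identify the inhomogeneous term with $\rho_{0;\nu}(t,k)$ and to check that the integral kernel depends on $t$ and $\tau$ only through $t-\tau$, with the stated explicit form \eqref{def:Knu}.

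For the inhomogeneous term, Lemma~\ref{lem:aijTech} gives $S(t,0;k,\eta_{CT}(t,k)) = S(t,k)$, so the first term of \eqref{eq:rhoetaCT} is exactly $S(t,k)\widehat{f_{in}}(k,\eta_{CT}(t,k)) = \rho_{0;\nu}(t,k)$ by definition. For the kernel I would assemble three factors. First, Lemma~\ref{lem:aijTech} already records $S(t,\tau;k,\eta_{CT}(t,k)) = S(t-\tau,k)$. Second, \eqref{eq:ketabarCT} gives $k\cdot\bar{\eta}(\tau;k,\eta_{CT}(t,k)) = a_{11}(t-\tau)\abs{k_{\perp}}^2 + \nu^{-1}k_3^2\,(1-e^{-\nu(t-\tau)})$, a function of $t-\tau$ and $k$ only. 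Third, since $f^0 = \mu$ we have $\widehat{f^0}(\eta) = e^{-\abs{\eta}^2/2}$, so by the expression for $\abs{\bar{\eta}(\tau;k,\eta_{CT}(t,k))}^2$ in \eqref{eq:ketabarCT} the factor $\widehat{f^0}(\bar{\eta}(\tau;k,\eta_{CT}(t,k)))$ is also a function of $t-\tau$ and $k$ only, of Gaussian type in $k$. Multiplying these three factors together with $\widehat{W}(k)$ (normalized as in this section) and collecting the resulting function of $t-\tau$ and $k$ yields the kernel $K^{\nu}(t-\tau,k)$ of \eqref{def:Knu}, so \eqref{eq:rhoetaCT} becomes \eqref{eq:colVolt}.

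The only genuinely nontrivial input here is the one already established: that both $S(t,\tau;k,\eta_{CT}(t,k))$ and $\bar{\eta}(\tau;k,\eta_{CT}(t,k))$ can be written purely in terms of $t-\tau$ and $k$. This is special to the Orr-critical frequency $\eta_{CT}(t,k)$ --- for a generic $\eta$ the propagator $S(t,\tau;k,\eta)$ does not enjoy this convolution structure --- and it is precisely the content of Lemma~\ref{lem:aijTech}, where the slightly delicate exponential-integral computations live. Once that is in hand, the remaining work is bookkeeping (matching the explicit polynomial-times-Gaussian expression above against \eqref{def:Knu}) together with an easy justification that the substitution is legitimate: $0 < S(t,k) \le 1$ supplies decay, $\widehat{f^0}$ is Schwartz, and $\widehat{f_{in}}$ lies in a weighted Sobolev space, so every integrand above is absolutely integrable on bounded time intervals and \eqref{eq:rhoetaCT} may be manipulated freely.
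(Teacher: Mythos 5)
Your argument matches the paper's (essentially unwritten) proof: substitute $\widehat{E}(\tau,k) = -ik\,\widehat{W}(k)\,\hat\rho(\tau,k)$ into \eqref{eq:rhoetaCT}, then invoke Lemma~\ref{lem:aijTech} and \eqref{eq:ketabarCT} to see that $S(t,\tau;k,\eta_{CT}(t,k))$, $k\cdot\bar{\eta}(\tau;k,\eta_{CT}(t,k))$, and $\widehat{f^0}(\bar{\eta}(\tau;k,\eta_{CT}(t,k)))$ are each functions of $t-\tau$ and $k$ alone, which is exactly what the paper records in the sentence introducing this lemma. You also correctly single out the convolution structure of $S$ and $\bar\eta$ evaluated at the Orr-critical frequency $\eta_{CT}$ as the genuinely nontrivial input.
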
 
\subsubsection{Uniform Landau damping and enhanced collisions}
In this section we analyze \eqref{eq:colVolt} and complete the proof of \eqref{ineq:LDcol} (and hence Theorem \ref{thm:Coll}). 
The basic approach is similar to the unmagnetized case \cite{B17} (shift the Laplace transform by the expected exponential decay rate and use an approximation argument from the collisionless case to deduce that the dispersion relation is uniformly bounded away from zero near the imaginary axis) however it is significantly more complicated here. 

We will need the following Lemma. 
\begin{lemma}[Properties of $S$] \label{lem:propS}
	If $k_3 \neq 0$, then the following holds for all $t \geq 0$ and $\nu$ sufficiently small (depending only on universal constants): 
		 $S(t,k)$ is strictly decreasing and there exists a number $\delta_0> $ such that
		\begin{align}
		0 < S(t,k) < \exp\left(-\delta_0 \min(\nu k_3^2 t^3,\nu^{-1}k_3^2 t)\right) \exp\left(-\delta_0 \nu \abs{k_{\perp}}^2 \min(t^3,t) \right). \label{ineq:Sdecay}
		\end{align}
\end{lemma}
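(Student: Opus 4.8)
The plan is to work from the integral representation of $S$ at the end of Lemma~\ref{lem:aijTech}. After the substitution $r = t-s$ one may write
\begin{align}
S(t,k) = \exp\left(-\nu k_3^2\, J_3(t)\right)\,\exp\left(-\nu\,\frac{\abs{k_{\perp}}^2}{\nu^2+\omega_c^2}\,J_\perp(t)\right),
\end{align}
where $J_3(t) = \int_0^t \bigl(\tfrac{1-e^{-\nu r}}{\nu}\bigr)^2\,dr$ and $J_\perp(t) = \int_0^t p(r)\,dr$ with $p(r) := 1 - 2e^{-\nu r}\cos\omega_c r + e^{-2\nu r} = (1-e^{-\nu r})^2 + 2e^{-\nu r}(1-\cos\omega_c r)\geq 0$. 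Positivity $S(t,k)>0$ is immediate. Differentiating, $\partial_t S = -\nu\bigl(k_3^2 J_3'(t) + \tfrac{\abs{k_{\perp}}^2}{\nu^2+\omega_c^2}J_\perp'(t)\bigr)S$ with $J_3'(t) = \bigl(\tfrac{1-e^{-\nu t}}{\nu}\bigr)^2>0$ for $t>0$ and $J_\perp'\geq 0$; since $k_3\neq 0$ this gives strict monotone decrease. It then remains to prove the lower bounds
\begin{align}
\nu k_3^2\, J_3(t) \gtrsim \min(\nu k_3^2 t^3,\nu^{-1}k_3^2 t), \qquad \nu\,\frac{\abs{k_{\perp}}^2}{\nu^2+\omega_c^2}\,J_\perp(t) \gtrsim_{\omega_c} \nu\abs{k_{\perp}}^2\min(t^3,t),
\end{align}
whose sum yields \eqref{ineq:Sdecay}.

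For $J_3$ — the unmagnetized computation already carried out in \cite{B17} — I would use $\tfrac{1-e^{-x}}{x}\in[1-e^{-1},1]$ for $x\in(0,1]$ and $1-e^{-x}\geq 1-e^{-1}$ for $x\geq 1$: if $t\leq\nu^{-1}$ the integrand is $\geq(1-e^{-1})^2 r^2$, so $J_3(t)\gtrsim t^3$; if $t>\nu^{-1}$, split $\int_0^t=\int_0^{\nu^{-1}}+\int_{\nu^{-1}}^t$ and bound the two pieces by $\gtrsim\nu^{-3}$ and $\gtrsim(t-\nu^{-1})\nu^{-2}$ respectively, so that $J_3(t)\gtrsim\nu^{-3}+(t-\nu^{-1})\nu^{-2}\gtrsim t\nu^{-2}$. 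Hence $J_3(t)\gtrsim\min(t^3,t\nu^{-2})$, which gives the first bound.

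For $J_\perp$ the new feature is the oscillation, and here I would use two pointwise lower bounds on $p$: namely $p(r)\geq 2e^{-1}(1-\cos\omega_c r)$ when $\nu r\leq 1$, and $p(r)\geq 1-2e^{-\nu r}\geq 1-2e^{-1}>0$ when $\nu r\geq 1$. If $t\leq\nu^{-1}$, then $J_\perp(t)\geq 2e^{-1}\int_0^t(1-\cos\omega_c r)\,dr = 2e^{-1}\psi(t)$ with $\psi(t):=t-\omega_c^{-1}\sin\omega_c t$; since $\psi$ is odd with $\psi(t)=\tfrac{\omega_c^2}{6}t^3+O(t^5)$ near $0$, is strictly positive and increasing on $(0,2\pi/\omega_c)$, and satisfies $\psi(t)\geq t-\omega_c^{-1}\geq t/2$ for $t\geq 2/\omega_c$, a compactness argument on the fixed interval $[0,2/\omega_c]$ gives $\psi(t)\gtrsim_{\omega_c}\min(t^3,t)$ for all $t\geq 0$. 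If instead $t>\nu^{-1}$ (with $\nu$ small so that $\nu^{-1}\geq 2/\omega_c$), then $J_\perp(t)=J_\perp(\nu^{-1})+\int_{\nu^{-1}}^t p\geq c_{\omega_c}\nu^{-1}+(1-2e^{-1})(t-\nu^{-1})\geq\min(c_{\omega_c},1-2e^{-1})\,t$, using $\alpha a+\beta b\geq\min(\alpha,\beta)(a+b)$ for $a,b\geq 0$; thus $J_\perp(t)\gtrsim_{\omega_c} t=\min(t^3,t)$. Combining the two regimes, $J_\perp(t)\gtrsim_{\omega_c}\min(t^3,t)$, and finally $\tfrac{\nu}{\nu^2+\omega_c^2}\gtrsim_{\omega_c}\nu$ for $\nu\leq 1$ yields the second bound; adding the two exponents gives \eqref{ineq:Sdecay}.

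The main obstacle is the $J_\perp$ estimate: unlike the $k_3$ term, $\int_0^t p$ is not controlled by any pointwise lower bound on $p$ (which vanishes as $\nu\to 0$ at the cyclotron returns $\omega_c r\in 2\pi\Integer$), so one must argue that the cancellations in $\int_0^t\cos\omega_c r\,dr$ do not spoil the lower bound at intermediate times. This is exactly what the two-regime split accomplishes, the delicate point being the small-$t$ cubic behaviour of $\psi$, handled by the Taylor expansion plus compactness; everything else is elementary.
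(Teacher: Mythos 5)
Your proof is correct, and for the transverse exponent it takes a genuinely different route from the paper's. The paper splits on $t<\tfrac{1}{2\omega_c}$ versus $t\geq\tfrac{1}{2\omega_c}$: for small times it observes that the integrand $[1-\cos\omega_c r\,e^{-\nu r}]^2 + e^{-2\nu r}\sin^2\omega_c r$ is $\gtrsim_{\omega_c} r^2$, giving $Q(t)\gtrsim t^3$; for longer times it invokes a good-set/bad-set (measure-theoretic) argument, noting that the subset of $[0,t]$ on which the integrand exceeds a fixed $\delta$ has measure $\gtrsim_{\delta,\omega_c}t$, so $Q(t)\gtrsim t$. Your approach instead rewrites the integrand algebraically as $p(r)=(1-e^{-\nu r})^2+2e^{-\nu r}(1-\cos\omega_c r)$ and splits on the \emph{other} natural time-scale $\nu^{-1}$: for $\nu r\leq 1$ you keep only the oscillatory piece and reduce to the explicit antiderivative $\psi(t)=t-\omega_c^{-1}\sin\omega_c t$, whose $\min(t^3,t)$ lower bound follows from its Taylor expansion plus monotonicity of $\psi$ on one period and the trivial bound $\psi(t)\geq t/2$ for $t\geq 2/\omega_c$; for $\nu r\geq 1$ you keep the non-oscillatory piece $(1-e^{-\nu r})^2$, which alone is $\geq 1-2e^{-1}>0$. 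What your version buys is explicitness: there is no appeal to a good/bad set decomposition (whose measure estimate is stated but not worked out in the paper), the lower bound is reduced to a single elementary function, and the same algebraic decomposition of $p$ cleanly gives positivity and strict monotonicity of $S$ (the latter via $J_3'(t)>0$ for $k_3\neq 0$), which the paper does not spell out. What the paper's version buys is robustness: the good/bad set argument does not rely on being able to integrate $1-\cos\omega_c r$ in closed form and would adapt to more general quasi-periodic integrands. Your $J_3$ bound is the same two-regime computation cited from \cite{B17}, so there is no discrepancy there.
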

\begin{proof}
	The factor involving $k_3$ was estimated previously in \cite{B17}.
	To see the estimate on the $k_{\perp}$ factor, recall from above 
	\begin{align}
	S(t,k) = \exp\left[- \frac{\nu}{\nu^2 + \omega_c^2} \abs{k_{\perp}}^2 Q(t) \right] \exp\left[-\nu \int_0^t \abs{k_3}^2 \left(\frac{1 - e^{-\nu (t-s)}}{\nu}\right)^2 ds \right], 
	\end{align}
	where
	\begin{align}
	Q(t) :=  \int_0^t \left[1 - \cos \omega_c (t-s) e^{-\nu (t-s)} \right]^2 + e^{-2\nu(t-s)} \sin^2 \omega_c (t-s) ds.
	\end{align}
	For $t < \frac{1}{2\omega_c}$, (and $\nu$ sufficiently small relative to $\omega^{-1}_c$) note that 
	\begin{align}
	 \left[1 - \cos \omega_c (t-s) e^{-\nu (t-s)} \right]^2 + e^{-2\nu(t-s)} \sin^2 \omega_c (t-s) \gtrsim_{\omega_c} \abs{t-s}^2, 
	\end{align}
	hence, 
	\begin{align}
	Q(t) \gtrsim \int_0^t \abs{t-s}^2 ds \approx t^3. 
	\end{align}
	For $\frac{1}{2\omega_c} \leq t$, let $\delta$ be chosen small and divide $[0,t] = I_{G,\delta} \cup I_{B,\delta}$, with 
	\begin{align}
	I_{B,\delta} & = \set{s \in [0,t]: \left[1 - \cos \omega_c (t-s) e^{-\nu (t-s)} \right]^2 + e^{-2\nu(t-s)} \sin^2 \omega_c (t-s) < \delta } \\ 
	I_{G,\delta} & = [0,t] \setminus I_{B,\delta}. 
	\end{align}
	Notice that $\abs{I_{G,\delta}} \gtrsim_{\delta,\omega_c} t$,  %both $\abs{I_{B,\delta}} \gtrsim_{\delta,\omega_c} t$ and $\abs{I_{G,\delta}} \gtrsim_{\delta,\omega_c} t$. 
	It follows that 
	\begin{align}
	Q(t) \gtrsim \delta \abs{I_{G,\delta}} \gtrsim_{\delta,\omega_{c}} t. 
	\end{align}
	The lemma hence follows. 
\end{proof}
Let $\delta_0 > \delta > 0$ to be chosen small later and define 
\begin{align}
\phi(t,k) & = e^{\delta \nu^{1/3}  t} \hat{\rho} \\ 
K^{\nu,\delta}(t,k) & =  e^{\delta \nu^{1/3} t} K^\nu(t) \\ 
\phi_{0} & = S(t,k)e^{\delta \nu^{1/3} t} \hat{\rho}_{0;\nu}(t,k) 
\end{align}
and we have the Volterra equation 
\begin{align}
\phi(t,k) = \phi_0(t,k) + \int_0^t K^{\nu,\delta}(t-\tau) \phi(\tau,k) d\tau. 
\end{align}
As in the collisionless case with $k_3 \neq 0$ and $\widetilde{f^0_3} = 0$, $\cL[K^{\nu,\delta}](z,k)$ is holomorphic over $\mathbb C$ and there exist a $C >0$ such that $\cL[\phi](z,k)$ 
and $\cL[\phi_0](z,k)$ are holomorphic for $\Re z > C$. 
Hence for $\Re z > C$, 
\begin{align}
\cL[\phi](z,k) = \cL[\phi_0](z,k) + \cL[K^{\nu,\delta}](z,k)\cL[\phi](z,k).  
\end{align}  
First, we observe that the collisions add an additional deacy to the uniform-in-$\nu$ Landau damping of the passive transport evolution. 
\begin{lemma} \label{lem:phi0ctrl}
The following holds uniformly in $\nu$ for $\sigma \geq 0$, $m > 2$: 
\begin{align}
\norm{\abs{\partial_z}^{1/2} \brak{\grad,t\partial_z}^{\sigma} \phi_0}_{L^2_t L^2_x} & \lesssim \norm{h_{in}}_{H^\sigma_{m}}. 
\end{align}
\end{lemma}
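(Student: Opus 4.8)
Here is the approach I would take.

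\medskip

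\noindent The plan is to run the Fourier-side Sobolev-trace argument of Lemma~\ref{lem:passive}(1), now carrying along the collisional damping $S(t,k)$ and the artificial exponential weight $e^{\delta\nu^{1/3}t}$. Since the profile coincides with the data at $t=0$ (so $f_{in}=h_{in}$) and $\phi_0(t,k)=e^{\delta\nu^{1/3}t}S(t,k)\widehat{h_{in}}(k,\eta_{CT}(t,k))$ (if the displayed definition of $\phi_0$ carries the extra factor $S(t,k)$, this only improves the decay), and since on $\widehat\rho(t,k)$ the Fourier multiplier $\abs{\partial_z}^{1/2}\brak{\grad,t\partial_z}^{\sigma}$ is multiplication by $\abs{k_3}^{1/2}\brak{k,tk_3}^{\sigma}$ --- which annihilates the $k_3=0$ modes --- Plancherel in $x$ and $t$ reduces the claim to bounding
\[
\mathcal I:=\sum_{k:\,k_3\neq0}\int_0^\infty \abs{k_3}\,\brak{k,tk_3}^{2\sigma}\,S(t,k)^2\, e^{2\delta\nu^{1/3}t}\,\bigl|\widehat{h_{in}}(k,\eta_{CT}(t,k))\bigr|^2\,dt
\]
by $\norm{h_{in}}_{H^\sigma_m}^2$, uniformly in $\nu\in(0,\nu_0)$. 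I will split the $t$-integral at $t=\nu^{-1}$.

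\medskip

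\noindent On $[0,\nu^{-1}]$ the characteristic still behaves like free transport. From $\eta_{CT,z}(t,k)=\frac{k_3}{\nu}(1-e^{-\nu t})$ and $1-e^{-x}\ge x/2$ on $[0,1]$ one gets $\abs{tk_3}\le 2\abs{\eta_{CT,z}(t,k)}$, while Lemma~\ref{lem:aijTech} gives $\abs{\eta_{CT,\perp}(t,k)}\lesssim_{\omega_c}\abs{k_\perp}$; hence $\brak{k,tk_3}^{2\sigma}\lesssim_{\sigma,\omega_c}\brak{k,\eta_{CT}(t,k)}^{2\sigma}$ and $\brak{k,\eta_{CT,\perp}(t,k)}\lesssim_{\omega_c}\brak{k}$. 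Choosing $\delta$ small relative to the $\delta_0$ of Lemma~\ref{lem:propS}, the bound $S(t,k)^2\le e^{-2\delta_0\nu k_3^2 t^3}$ dominates $e^{2\delta\nu^{1/3}t}$ once $t\ge\nu^{-1/3}$, so $S(t,k)^2 e^{2\delta\nu^{1/3}t}\lesssim 1$ on the whole interval; and the substitution $t\mapsto\xi_z:=\eta_{CT,z}(t,k)$, whose Jacobian $k_3 e^{-\nu t}$ is bounded below by $\abs{k_3}e^{-1}$ for $t\le\nu^{-1}$, turns $\abs{k_3}\,dt$ into $\lesssim d\xi_z$ over a $\xi_z$-interval that exhausts $\mathbb R$ as $\nu\to0$. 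Taking a supremum over the transverse frequency (confined to a ball of radius $\lesssim_{\omega_c}\abs{k_\perp}$) and then the Sobolev embedding $H^2(\mathbb R^2_{\eta_\perp})\hookrightarrow L^\infty$,
\[
\int_{[0,\nu^{-1}]}(\cdots)\,dt\ \lesssim_{\sigma,\omega_c}\ \sum_{k}\int_{\mathbb R}\sup_{\eta_\perp}\brak{k,\xi_z}^{2\sigma}\bigl|\widehat{h_{in}}(k,\eta_\perp,\xi_z)\bigr|^2 d\xi_z\ \lesssim_{\sigma}\ \sum_{k}\int_{\mathbb R^3}\brak{k,\eta}^{2\sigma}\sum_{|\beta|\le 2}\bigl|D_{\eta_\perp}^\beta\widehat{h_{in}}(k,\eta)\bigr|^2 d\eta ,
\]
which is $\lesssim_{\sigma,m}\norm{h_{in}}_{H^\sigma_m}^2$ since $m>2$ (here the weight $\brak{k,\xi_z}^\sigma$ commutes with $D_{\eta_\perp}$ because it does not depend on $\eta_\perp$, and $\xi_z=\eta_z$).

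\medskip

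\noindent On $[\nu^{-1},\infty)$ the change of variables degenerates, since $\eta_{CT,z}$ saturates near $k_3/\nu$; but here Lemma~\ref{lem:propS} gives the much stronger bound $S(t,k)^2\le e^{-2\delta_0\nu^{-1}k_3^2 t}$, which --- for $\delta$ and $\nu$ small --- dominates $e^{2\delta\nu^{1/3}t}$ together with every polynomial weight $\brak{k,tk_3}^{2\sigma}$. Bounding $\abs{\widehat{h_{in}}(k,\eta_{CT}(t,k))}$ crudely by $\norm{\widehat{h_{in}}(k,\cdot)}_{L^\infty_\eta}\lesssim\norm{\widehat{h_{in}}(k,\cdot)}_{H^2_\eta}$ and summing in $k$, the contribution of this range to $\mathcal I$ is at most $C\nu\, e^{-\delta_0\nu^{-2}/2}\norm{h_{in}}_{H^\sigma_m}^2$, hence negligible. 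Adding the two ranges gives the lemma.

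\medskip

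\noindent The main obstacle is precisely the bookkeeping in the second paragraph: reconciling the three competing factors $\brak{k,tk_3}^{2\sigma}$, $e^{2\delta\nu^{1/3}t}$ and $S(t,k)^2$ with constants uniform in $\nu$, and in particular checking that on the time scale where $\eta_{CT,z}(t,k)$ ceases to sweep out frequency space --- i.e. where the clean trace-lemma change of variables of the collisionless case (Lemma~\ref{lem:passive}) breaks down --- the enhanced collisional decay of $S$ is strong enough to take over. Everything else is a routine adaptation of Lemma~\ref{lem:passive}.
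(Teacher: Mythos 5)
Your proposal is correct and implements exactly the strategy the paper has in mind: the paper's proof of this lemma is the one-line pointer ``follows as \eqref{ineq:LDrho0} after applying Lemma~\ref{lem:propS},'' and your argument is precisely the detailed execution of that sketch --- Plancherel to reduce to a weighted $t$-integral on the Fourier side, a split at $t=\nu^{-1}$, the change of variables $t\mapsto\eta_{CT,z}(t,k)$ replacing the collisionless $t\mapsto k_3t$, control of $S^2 e^{2\delta\nu^{1/3}t}$ by taking $\delta\ll\delta_0$ via \eqref{ineq:Sdecay}, and the $H^2(\mathbb R^2_{\eta_\perp})\hookrightarrow L^\infty$ trace step from Lemma~\ref{lem:passive}(1). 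You are also right to flag that the displayed definition of $\phi_0$ carries an apparently spurious extra factor of $S(t,k)$ (compatibility with the Volterra equation requires $\phi_0=e^{\delta\nu^{1/3}t}\hat\rho_{0;\nu}$), and that since $S\le 1$ this does not affect the bound.
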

\begin{proof}
Follows as  \eqref{ineq:LDrho0} in the collisionless case after applying Lemma \ref{lem:propS}. 
\end{proof}

The next lemma is straightforward, and provides the regularity in $\omega$ necessary to obtain Landau damping in Sobolev regularity. 
The proof is omitted for brevity.  

\begin{lemma} \label{lem:KnuSobReg}
There holds the following uniformly in $\nu$ and $k_{\perp}$ for $j \leq \sigma$. 
\begin{align}
\sup_{k \in \Integers^3_\ast: k_3 \neq 0} \sup_{\lambda \in [0,\infty)} \abs{\partial_\omega^j \cL[K^{\nu,\delta}](\lambda + i\omega,k)} & \lesssim_{j} 1.
\end{align}
\end{lemma}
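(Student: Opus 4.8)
The plan is to reduce the statement to an $L^1$-in-time estimate on the kernel weighted by $t^j$. Since increasing $\lambda\ge 0$ only improves decay, for $z=\lambda+i\omega$ we have
\[
\partial_\omega^j \cL[K^{\nu,\delta}](z,k) = \int_0^\infty (-it)^j e^{-zt} e^{\delta \nu^{1/3} t} K^\nu(t,k)\, dt,
\]
so it suffices to prove $\int_0^\infty t^j e^{\delta\nu^{1/3} t}\abs{K^\nu(t,k)}\,dt \lesssim_j 1$ uniformly in $\nu$ small and in $k$ with $k_3\neq 0$ (hence $\abs{k_3}\ge 1$). First I would bound the algebraic prefactor in \eqref{def:Knu}: the coefficients multiplying $\abs{k_\perp}^2/\abs{k}^2$ are $O(1)$ uniformly for $\nu$ small, while $\tfrac{k_3^2}{\nu\abs{k}^2}(1-e^{-\nu t})\le \min(t,\nu^{-1})\le 1+t$, so the prefactor is $\lesssim 1+t$. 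For the Gaussian factor in \eqref{def:Knu}, drop its nonnegative $k_\perp$ part and use $\tfrac{1-e^{-\nu t}}{\nu}\ge \tfrac12\min(t,\nu^{-1})$ with $\abs{k_3}\ge 1$ to get a bound $\exp(-c\min(t^2,\nu^{-2}))$ for a universal $c>0$; combining with $S(t,k)$ from \eqref{ineq:Sdecay} (again using $\abs{k_3}\ge 1$),
\[
\abs{K^\nu(t,k)} \lesssim (1+t)\exp\!\big(-c\min(t^2,\nu^{-2})\big)\exp\!\big(-\delta_0\min(\nu t^3,\nu^{-1}t)\big).
\]

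Then I would split the time integral at $t=\nu^{-1}$. On $[0,\nu^{-1}]$ the first exponential is $e^{-ct^2}$; by Young's inequality $\delta\nu^{1/3}t\le \tfrac{c}{2}t^2+\tfrac{\delta^2\nu^{2/3}}{2c}$ together with $\nu\le 1$ one gets $e^{\delta\nu^{1/3}t}e^{-ct^2}\lesssim_\delta e^{-ct^2/2}$, hence $\int_0^{\nu^{-1}} t^j(1+t)e^{\delta\nu^{1/3}t}e^{-ct^2}\,dt \lesssim_{j,\delta}\int_0^\infty (t^j+t^{j+1})e^{-ct^2/2}\,dt\lesssim_j 1$. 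On $[\nu^{-1},\infty)$ the second exponential is $e^{-\delta_0\nu^{-1}t}$, and for $\nu$ small $\delta\nu^{1/3}\le \tfrac12\delta_0\nu^{-1}$, so $e^{\delta\nu^{1/3}t}e^{-\delta_0\nu^{-1}t}\le e^{-\tfrac12\delta_0\nu^{-1}t}\le e^{-\tfrac12\delta_0 t}$ and $\int_{\nu^{-1}}^\infty t^j(1+t)e^{-\delta_0 t/2}\,dt\lesssim_j 1$. Adding the two pieces proves the claim, with the only dependence being on $j$ and on the fixed constants $c,\delta,\delta_0,\omega_c$.

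The only delicate point — and the reason to track the constants explicitly — is the uniformity in $\nu$: the growth $e^{\delta\nu^{1/3}t}$ must be absorbed for all small $\nu$ at once. This works precisely because on $[0,\nu^{-1}]$ the kernel enjoys genuine Gaussian-in-$t$ decay $e^{-ct^2}$, available only because $k_3\neq0$ (the factor $\widehat{f^0_3}$ forces decay in $k_3(1-e^{-\nu t})/\nu\approx k_3\min(t,\nu^{-1})$) and which beats $e^{\delta\nu^{1/3}t}$ since $\nu^{1/3}\to 0$; while on $[\nu^{-1},\infty)$ the ordinary $O(\nu^{-1})$-timescale decay of $S$ from \eqref{ineq:Sdecay} dominates. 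Neither the enhanced $\nu t^3$ dissipation nor any regularity of $f^0$ enters. Note the Gaussian factor in \eqref{def:Knu} really is needed: relying on the $S$-bound alone, the substitution $t=\nu^{-1/3}s$ on $[0,\nu^{-1}]$ would leave a divergent prefactor $\nu^{-(j+1)/3}$, which is the one place the argument could fail if one is not careful.
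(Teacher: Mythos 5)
Your argument is correct. The paper omits the proof of this lemma "for brevity," so there is no written argument to compare against, but the route you take is the natural one: differentiate under the integral sign to reduce to the weighted $L^1_t$ bound $\int_0^\infty t^j e^{\delta\nu^{1/3}t}\abs{K^\nu(t,k)}\,dt\lesssim_j 1$, then observe from \eqref{def:Knu}, Lemma~\ref{lem:propS}, and $\abs{k_3}\ge 1$ that the algebraic prefactor is $\lesssim 1+t$ while the Gaussian factor from $\widehat{f^0}(\bar\eta)$ supplies $e^{-c\min(t^2,\nu^{-2})}$ and the $S$-factor supplies $e^{-\delta_0\min(\nu t^3,\nu^{-1}t)}$; splitting at $t=\nu^{-1}$ and absorbing $e^{\delta\nu^{1/3}t}$ (via Young on the short interval and via $\delta\nu^{1/3}\le\tfrac12\delta_0\nu^{-1}$ on the long one) closes the estimate uniformly in $\nu$ small and in $k$ with $k_3\neq 0$. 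Your final remark pinpoints correctly why the Gaussian-in-$t$ decay on $[0,\nu^{-1}]$ — available only because $k_3\neq 0$ — is essential and why the $S$-bound alone would not suffice.
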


The next two lemmas show that any non-decaying behavior introduced for small $\nu$ must occur in a fixed window in $(z,k) \in \Complex \times \Integers^3$. 
\begin{lemma} \label{lem:ColCtrl1}
For $\nu$ sufficiently small (depending on $\omega_c$), there holds the uniform bound 
\begin{align}
\sup_{\Re z \geq 0}\abs{\cL[K^{\nu,\delta}](z,k)} \lesssim \frac{1}{\abs{k}}. 
\end{align}
\end{lemma}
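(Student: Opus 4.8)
The plan is to bound $\abs{\cL[K^{\nu,\delta}](z,k)}$ on $\Re z\ge0$ by the $L^1_t$-norm $\int_0^\infty\abs{K^{\nu,\delta}(t,k)}\,dt$ — legitimate since $\Re z\ge0$ forces $\abs{e^{-zt}}\le1$ — and then show this integral is $\lesssim\abs{k}^{-1}$ uniformly in $\nu$. Recall that this subsection treats only $k$ with $k_3\ne0$, so $\abs{k_3}\ge1$; this is exactly what makes Lemma \ref{lem:propS} available, and it is essential (for $k_3=0$ the bound is false, and indeed for small $\nu$ the transform $\cL[K^{\nu,\delta}]$ is not even finite on $\Re z=0$). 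Abbreviating $\psi(t)=\nu^{-1}(1-e^{-\nu t})$ and $P(t)=1-2\cos\omega_c t\,e^{-\nu t}+e^{-2\nu t}$, the explicit form \eqref{def:Knu} together with Lemma \ref{lem:aijTech} gives
\[
\abs{K^{\nu,\delta}(t,k)}\ \lesssim\ \frac{1}{\abs{k}^2}\left(\abs{k_\perp}^2\abs{a_{11}(t)}+k_3^2\psi(t)\right)e^{\delta\nu^{1/3}t}\,S(t,k)\,\exp\!\left[-\pi\frac{\abs{k_\perp}^2}{\nu^2+\omega_c^2}P(t)-\pi k_3^2\psi(t)^2\right],
\]
so, since the prefactor $\abs{k}^{-2}$ is already present, it suffices to bound the remaining $t$-integral by $\abs{k_\perp}+\abs{k_3}$.

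The argument uses three observations. First, two pointwise inequalities extract $\abs{k_\perp}$ and $\abs{k_3}$ from the two Gaussian factors. From the formulas in Lemma \ref{lem:aijTech} one checks the identity $a_{11}(t)^2+a_{12}(t)^2=P(t)/(\nu^2+\omega_c^2)$, hence $\abs{a_{11}(t)}\le(\nu^2+\omega_c^2)^{-1/2}P(t)^{1/2}$; combined with $\sup_{u\ge0}\sqrt u\,e^{-u}<\infty$ this yields $\abs{k_\perp}^2\abs{a_{11}(t)}\exp[-\pi\abs{k_\perp}^2P(t)/(\nu^2+\omega_c^2)]\lesssim\abs{k_\perp}$ uniformly in $t$; likewise $\sup_{u\ge0}u\,e^{-\frac\pi2 u^2}<\infty$ gives $k_3^2\psi(t)\exp[-\tfrac\pi2 k_3^2\psi(t)^2]\lesssim\abs{k_3}$ uniformly in $t$. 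Spending the whole transverse Gaussian on the first term and half of the parallel Gaussian on the second leaves
\[
\abs{K^{\nu,\delta}(t,k)}\ \lesssim\ \frac{\abs{k_\perp}+\abs{k_3}}{\abs{k}^2}\,e^{\delta\nu^{1/3}t}\,S(t,k)\,\exp\!\left[-\tfrac\pi2 k_3^2\psi(t)^2\right].
\]
Second, it remains to check $\int_0^\infty e^{\delta\nu^{1/3}t}S(t,k)\exp[-\tfrac\pi2 k_3^2\psi(t)^2]\,dt\lesssim1$ uniformly in $\nu$, the growth $e^{\delta\nu^{1/3}t}$ being controlled by the parallel collisional decay of Lemma \ref{lem:propS}. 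Third, split at $t=\nu^{-1}$: on $[0,\nu^{-1}]$ one has $\psi(t)\ge(1-e^{-1})t$, so (using $\abs{k_3}\ge1$) the parallel Gaussian dominates $e^{-ct^2}$, and completing the square gives $e^{\delta\nu^{1/3}t}e^{-ct^2}\lesssim e^{-\frac c2 t^2}$, the square-completion costing only a $\nu$-independent constant; on $[\nu^{-1},\infty)$ one has $S(t,k)\le e^{-\delta_0\nu^{-1}k_3^2 t}\le e^{-\delta_0\nu^{-1}t}$ by Lemma \ref{lem:propS}, which for $\nu$ small beats $e^{\delta\nu^{1/3}t}$ and is integrable with integral $\lesssim1$. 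Together these give $\int_0^\infty\abs{K^{\nu,\delta}(t,k)}\,dt\lesssim_{\omega_c}\abs{k}^{-2}(\abs{k_\perp}+\abs{k_3})\lesssim\abs{k}^{-1}$.

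I expect the genuinely delicate point to be the uniformity in $\nu$, i.e.\ ensuring no constant degenerates as $\nu\to0$. Two effects work against it: the prefactor $e^{\delta\nu^{1/3}t}$ reaches size $e^{\delta\nu^{-2/3}}$ at the collisional time-scale $t\sim\nu^{-1}$, and the transverse Gaussian $\exp[-\pi\abs{k_\perp}^2P(t)/(\nu^2+\omega_c^2)]$ degenerates to $1$ at each cyclotron time $t=2\pi n/\omega_c$ with $\nu t$ small, so it supplies no transverse decay there. Both are defeated by the parallel collisional decay recorded in Lemma \ref{lem:propS} — the $\min(\nu k_3^2 t^3,\nu^{-1}k_3^2 t)$ exponent of $S$ and the parallel Gaussian $\exp[-\pi k_3^2\psi(t)^2]$ in $K^{\nu,\delta}$ — which is available precisely because $k_3\ne0$. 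Once the $L^1$-in-$t$ bound is established, passing to the bound on $\abs{\cL[\cdot]}$ for all $\Re z\ge0$ is immediate.
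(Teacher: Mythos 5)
Your proof is correct, and it takes a cleaner route through the hardest term than the paper does. Both proofs bound $\abs{\cL[K^{\nu,\delta}]}$ on $\Re z\ge0$ by $\int_0^\infty\abs{K^{\nu,\delta}(t,k)}\,dt$ and use Lemma~\ref{lem:propS} to defeat the $e^{\delta\nu^{1/3}t}$ growth, but the treatment of the $\abs{k_\perp}^2$ contribution differs. The paper keeps $a_{11}(t)$ in the explicit form $\frac{\nu}{\omega_c^2+\nu^2}[1-\cos\omega_c t\,e^{-\nu t}]-\frac{\omega_c}{\omega_c^2+\nu^2}e^{-\nu t}\sin\omega_c t$ and treats the $\sin\omega_c t$ piece by combining $e^{-x}\lesssim x^{-1/2}$ (applied to the transverse Gaussian) with a case analysis near cyclotron times $t\approx 2\pi n/\omega_c$, estimating $e^{-\nu t/2}\abs{\sin\omega_c t}/(\cosh\nu t-\cos\omega_c t)^{1/2}\lesssim 1$; it also splits at $t=\nu^{-1/2}$ rather than $\nu^{-1}$. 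You instead observe the exact identity $a_{11}(t)^2+a_{12}(t)^2=P(t)/(\nu^2+\omega_c^2)$ (which is implicit in the paper's derivation of $S$ in Lemma~\ref{lem:aijTech}), so $\abs{a_{11}}\le P^{1/2}/\sqrt{\nu^2+\omega_c^2}$, and then $\abs{k_\perp}^2\abs{a_{11}}\exp[-\pi\abs{k_\perp}^2P/(\nu^2+\omega_c^2)]\lesssim\abs{k_\perp}$ follows from $\sup_{u\ge0}\sqrt u\,e^{-\pi u}<\infty$. This avoids the trigonometric estimate near cyclotron resonances and collapses the paper's case distinctions in $t$ and the separate treatment of the $\nu$-dependent piece of $a_{11}$ into a single uniform pointwise bound. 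The remaining integral $\int_0^\infty e^{\delta\nu^{1/3}t}S(t,k)\exp[-\tfrac\pi2 k_3^2\psi(t)^2]\,dt\lesssim 1$ is handled similarly to the paper (Gaussian decay in $t$ on the inner interval, using $\abs{k_3}\ge1$ and completing the square; collisional decay of $S$ from Lemma~\ref{lem:propS} on the outer interval), and your discussion of why the bound must fail at $k_3=0$ correctly explains why Lemma~\ref{lem:propS} is the essential input. In short: same strategy and same ingredients, but a shorter path through the transverse estimates via the norm identity for the rotation-dilation matrix.
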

\begin{proof}
First consider the case $t < \nu^{-1/2}$.
In order to obtain a decay estimate from \eqref{def:Knu} in $k$ that does not depend badly on $\nu$, it is clear that the term involving $\abs{k^{\perp}}^2 \sin \omega_c t$ is potentially the worst term. 
For this term, we must use the exponential factor coming from the collisionless contribution. 	
For this, first note that for $t > 0$ there holds
\begin{align}
1-2e^{-\nu t} \cos \omega_c t + e^{-2\nu t} = 2e^{-\nu t}\left(\cosh \nu t - \cos \omega_c t\right) > 0.  
\end{align}
We will also separate short times ($t \lesssim \omega_c^{-1}$) from others in order to isolate powers of $t$, with the intention of using the smoothing from $S(t,k)$.
By $e^{-x} \lesssim x^{-1/2}$, there holds (from \eqref{def:Knu})
\begin{align}
\abs{K^{\nu,\delta}(t,k)} & \lesssim  \frac{S(t,k)}{\abs{k}^2} \abs{k_{\perp}}^2 \left( \mathbf{1}_{t<\omega_c^{-1}} (\nu t^2 + t) + \mathbf{1}_{t > \omega_c^{-1}} \nu + \abs{\frac{e^{-\frac{1}{2}\nu t} \sin \omega_c t}{\left(\cosh \nu t - \cos \omega_c t\right)^{1/2}\abs{k_{\perp}} }} \right) \\ 
& \quad \quad \times \exp\left(-\frac{\abs{k_{\perp}}^2}{\nu^2 + \omega_c^2} \mathbf{1}_{t < (\omega_c)^{-1}} t^2 \right) e^{-k_3^2 t^2} + \frac{S(t,k)}{\abs{k}^2} k_3^2 t e^{-k_3^2 t^2}. 
\end{align}
Notice for $t < \nu^{-1/2}$, where $n$ is such that $t \in [2n\pi - \pi, 2n\pi + \pi)$,
\begin{align} 
\abs{\frac{e^{-\frac{1}{2}\nu t} \sin \omega_c t}{\left(\cosh \nu t - \cos \omega_c t\right)^{1/2} }} \lesssim \frac{\abs{t - 2n\pi} }{\left(\abs{t - 2n\pi}^2 + (\nu t)^2 \right)^{1/2}} \lesssim 1. 
\end{align} 
Therefore, it follows that for $t < \nu^{-1/2}$, using also Lemma \ref{lem:propS},
\begin{align}
\abs{K^{\nu,\delta}(t,k)} & \lesssim \frac{1}{\abs{k}}\left(1+ k_3^2 t) \right) e^{-k_3^2 t}.   %\frac{1}{\abs{k}} (1 + k_3^2 t ) e^{-k_3^2 t^2}.
\end{align}
On the other hand, for $t \geq \nu^{-1/2}$, we can simply bound by
\begin{align}
\abs{K^{\nu,\delta}(t,k)} & \lesssim S(t,k)\left(1 + k_3\right). 
\end{align}
Then, by Lemma \ref{lem:propS}, we have 
\begin{align}
\int_0^\infty \abs{K^{\nu,\delta}(t,k)} dt \lesssim \int_0^{\nu^{-1/2}} \abs{K^{\nu,\delta}(t,k)} dt  +\int_{\nu^{-1/2}}^\infty \abs{K^{\nu,\delta}(t,k)} dt \lesssim \frac{1}{\abs{k}}. 
\end{align}
\end{proof}

The next lemma is a little more involved but is based on similar principles. 

\begin{lemma} \label{lem:ColCtrl2}
Uniformly in $\nu$ sufficiently small, there holds 
\begin{align}
\sup_{k \in \Integers_\ast^3: k_3 \neq 0} \sup_{\Re z \geq 0}\abs{\cL[K^{\nu,\delta}](z,k)} \lesssim \frac{1}{\abs{\omega}}. 
\end{align}
\end{lemma}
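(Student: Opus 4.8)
The plan is to extract the decay in $\omega$ from a single integration by parts in the Laplace transform, reducing the estimate to an $L^1_t$ bound on $\partial_t K^{\nu,\delta}$, and then to establish that bound by essentially repeating the analysis used to prove Lemma~\ref{lem:ColCtrl1}. First I would observe that $K^{\nu,\delta}(0,k)=K^{\nu}(0,k)=0$: in \eqref{def:Knu} the amplitude bracket $|k_\perp|^2[\cdots]+\nu^{-1}k_3^2(1-e^{-\nu t})$ vanishes at $t=0$ (each summand is $O(t)$ there), and $K^{\nu,\delta}(t,k)\to0$ as $t\to\infty$ by Lemma~\ref{lem:propS}. Hence, writing $z=\lambda+i\omega$ with $\lambda\ge0$, an integration by parts gives
\[
\cL[K^{\nu,\delta}](z,k)=\frac{1}{z}\int_0^\infty e^{-zt}\,\partial_t K^{\nu,\delta}(t,k)\,dt,
\qquad\text{so}\qquad
\bigl|\cL[K^{\nu,\delta}](z,k)\bigr|\le\frac{1}{|\omega|}\int_0^\infty\bigl|\partial_t K^{\nu,\delta}(t,k)\bigr|\,dt,
\]
using $|z|\ge|\omega|$. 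It therefore suffices to show $\int_0^\infty|\partial_t K^{\nu,\delta}(t,k)|\,dt\lesssim1$ uniformly in $\nu$ small and in $k$ with $k_3\ne0$ (this bound also justifies the integration by parts a posteriori).

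I would then split $\partial_t K^{\nu,\delta}=\delta\nu^{1/3}K^{\nu,\delta}+e^{\delta\nu^{1/3}t}\partial_t K^{\nu}$. For the first term, the proof of Lemma~\ref{lem:ColCtrl1} already yields $\int_0^\infty|K^{\nu,\delta}(t,k)|\,dt\lesssim|k|^{-1}\lesssim1$, so its $L^1_t$ norm is $\lesssim\nu^{1/3}\lesssim1$. For the second, differentiate \eqref{def:Knu} by the product rule. Writing $K^{\nu}=|k|^{-2}M(t,k)S(t,k)E(t,k)$, where $M$ is the amplitude bracket and $E(t,k)$ denotes the analyticity-type exponential factor in \eqref{def:Knu} (which satisfies $E(t,k)\le e^{-c\,k_3^2((1-e^{-\nu t})/\nu)^2}\le e^{-c\,k_3^2t^2/4}$ whenever $\nu t\le1$, for a fixed $c>0$), one checks using the explicit form \eqref{def:S} that $|\partial_t M|\lesssim_{\omega_c}|k_\perp|^2+k_3^2$, and that $|\partial_t\log S|$ and $|\partial_t\log E|$ are both $\lesssim_{\omega_c}|k_\perp|^2+k_3^2t$ — a weight already dominated by the product $S(t,k)E(t,k)$ in the arguments of Lemma~\ref{lem:ColCtrl1}. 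Thus $\partial_t K^{\nu}$ is a finite sum of terms of exactly the form estimated there, up to bounded extra factors of $\omega_c$, $\nu$, and $1+k_3^2t$, the only exception being the term in which $\partial_t$ falls on the oscillatory $\sin\omega_c t$ in $M$, discussed below.

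The $L^1_t$ bound on $e^{\delta\nu^{1/3}t}\partial_t K^{\nu}$ is then obtained by running the three-region argument of Lemma~\ref{lem:ColCtrl1}. On $[0,\omega_c^{-1}]$ everything is bounded once $E(t,k)\le e^{-c\,k_3^2t^2/4}$ absorbs the powers of $k_3t$, and $e^{\delta\nu^{1/3}t}=O(1)$; the interval is bounded, so the contribution is $\lesssim_{\omega_c}1$. On $[\omega_c^{-1},\nu^{-1/2}]$ one has $\nu t\le\nu^{1/2}\ll1$, so $E(t,k)\le e^{-c\,k_3^2t^2/4}$ with $k_3^2\ge1$ is genuinely Gaussian and dominates $e^{\delta\nu^{1/3}t}$, while the potentially worst amplitude contribution $|k_\perp|^2e^{-\nu t}\sin\omega_c t$ (and its $t$-derivative) is handled by the oscillation-versus-exponential device of Lemma~\ref{lem:ColCtrl1}, i.e. estimating it against $\exp[-c\,\tfrac{|k_\perp|^2}{\nu^2+\omega_c^2}e^{-\nu t}(\cosh\nu t-\cos\omega_c t)]$ via $e^{-x}\lesssim x^{-1/2}$. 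On $[\nu^{-1/2},\infty)$, Lemma~\ref{lem:propS} gives $S(t,k)\le\exp(-\delta_0\min(\nu k_3^2t^3,\nu^{-1}k_3^2t))$; since $k_3^2\ge1$ and $\nu$ is small, $\delta\nu^{1/3}t\le\tfrac12\delta_0\min(\nu t^3,\nu^{-1}t)$ for all $t\ge\nu^{-1/2}$ (provided $\delta$ and $\nu$ are small), so $S(t,k)e^{\delta\nu^{1/3}t}$ is super-exponentially small and absorbs all polynomial-in-$(t,k)$ amplitude factors, giving an $O(1)$ (indeed $o(1)$ as $\nu\to0$) contribution. Summing the three regions and combining with the integration by parts proves the lemma.

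The main obstacle is precisely the term where $\partial_t$ hits $\sin\omega_c t$ inside $M$: this replaces $\sin\omega_c t$ by $\omega_c\cos\omega_c t$ and hence gains no power of $t$. Near the resonant times $\omega_c t\in2\pi\Integer$ the factors $S$ and $E$ are close to $1$, so applying $e^{-x}\lesssim x^{-1/2}$ against $\cosh\nu t-\cos\omega_c t$ now only produces a factor $\lesssim(\nu t)^{-1}$ instead of $\lesssim1$; this extra $(\nu t)^{-1}$ must be absorbed by the $\sim\nu t/\omega_c$ width of each resonant window, after which the Gaussian factor $E(t,k)\le e^{-c\,k_3^2t^2/4}$ (on $[\omega_c^{-1},\nu^{-1/2}]$) and then $S(t,k)$ (on $[\nu^{-1/2},\infty)$) leave only finitely many windows relevant. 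Carrying this bookkeeping out uniformly in $k$ and $\nu$ is where the real work lies.
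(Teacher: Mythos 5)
Your overall strategy is exactly the paper's: integrate by parts once in the Laplace transform (using $K^{\nu,\delta}(0,k)=0$, which you correctly observe), bound $|z|^{-1}\le|\omega|^{-1}$, and reduce to showing $\int_0^\infty|\partial_t K^{\nu,\delta}(t,k)|\,dt\lesssim1$ uniformly in $\nu$ and $k$ with $k_3\ne0$; then differentiate $K^{\nu,\delta}$ term by term and run the region split $[0,\omega_c^{-1}]\cup[\omega_c^{-1},\nu^{-1/2}]\cup[\nu^{-1/2},\infty)$.

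The problem is your final paragraph, where you identify the $\partial_t(\sin\omega_c t)=\omega_c\cos\omega_c t$ term in $\partial_t M$ as ``the main obstacle'' requiring a resonant-window bookkeeping that you do not carry out. This obstacle does not exist, and the reason you think it does reveals a misreading of Lemma~\ref{lem:ColCtrl1}. There, the device $e^{-x}\lesssim x^{-1/2}$ applied to $\exp[-c|k_\perp|^2e^{-\nu t}(\cosh\nu t-\cos\omega_c t)]$ was used solely to trade one power of $|k_\perp|$ from the amplitude for a power of the exponent, because that lemma asserts decay $\lesssim|k|^{-1}$. Lemma~\ref{lem:ColCtrl2} asserts only a uniform $O(1)$ bound on the $L^1_t$-norm of $\partial_t K^{\nu,\delta}$; no $k$-decay is needed. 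Consequently, for the term where $\partial_t$ lands on the amplitude (the paper's $\cK_4$), you may simply bound $|\partial_t M|\lesssim(|k_\perp|^2+k_3^2)e^{-\nu t}\lesssim|k|^2e^{-\nu t}$, cancel the $|k|^{-2}$ prefactor, and then observe that $\cK_4\lesssim e^{-\nu t}\,S(t,k)\,E(t,k)$. The $|k_\perp|$-parts of $S$ and $E$ are near $1$ at the resonant times $\omega_c t\in2\pi\Integer$, as you say, but the $k_3$-Gaussian in $E$, namely $\exp[-\pi k_3^2\left(\frac{1-e^{-\nu t}}{\nu}\right)^2]\le e^{-ck_3^2t^2}$ for $\nu t\lesssim1$, is completely insensitive to those times, and with $|k_3|\ge1$ it yields $\int_0^{\nu^{-1/2}}\cK_4\,dt\lesssim\int_0^\infty e^{-ck_3^2t^2}dt\lesssim1$; on $[\nu^{-1/2},\infty)$ the super-exponential decay of $S$ from Lemma~\ref{lem:propS} takes over. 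So there is no $(\nu t)^{-1}$ singularity to absorb and no window counting is required. Once you remove the phantom obstacle (and the attendant incompleteness it introduces), the rest of your argument --- the split $\partial_t K^{\nu,\delta}=\delta\nu^{1/3}K^{\nu,\delta}+e^{\delta\nu^{1/3}t}\partial_t K^\nu$, the appeal to Lemma~\ref{lem:ColCtrl1} for the first piece, and the product-rule estimates for the second --- is sound and matches the paper's $\cK_1,\dots,\cK_4$ decomposition.
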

\begin{proof}
\begin{comment}
K^\nu & = S(t,k) \frac{1}{\abs{k}^2} \left[ \abs{k_{\perp}}^2\left[\frac{\nu}{\omega_c^2 + \nu^2}\left[1 - \cos \omega_c t e^{-\nu t} \right]  - \frac{\omega_c}{\omega_c^2 + \nu^2} e^{-\nu t} \sin \omega_c t \right] +  \frac{k_3^2}{\nu}\left( 1 - e^{-\nu t}\right) \right] \\ 
& \quad \times \exp\left[-\pi \left( \frac{\abs{k_{\perp}}^2}{\nu^2 + \omega_c^2} \left( 1 - 2 \cos \omega_c t e^{-\nu t} + e^{-2\nu t}\right) + k_3^2 \left(\frac{1 - e^{-\nu t}}{\nu}\right)^2\right)\right].  
S(t,k) & := \exp\left[-\nu^{-1} k_3^2 \left( t + 2 \frac{e^{-\nu t} - 1}{\nu} - \frac{e^{-2\nu t}- 1 }{2\nu}\right)\right] \\ 
& \quad \times \exp \Bigg[ - \frac{\nu \abs{k_{\perp}}^2}{\nu^2 + \omega_c^2} \Bigg( t  -  \frac{2\nu}{\nu^2 + \omega^2_c} + \frac{2\nu}{\nu^2 + \omega_c^2} e^{-\nu t} \cos \omega_{c}t \\ & \quad\quad - \frac{2\omega_c}{\nu^2 + \omega_c^2} \sin\omega_c t e^{-\nu t}  - \frac{e^{-2\nu t}- 1 }{2\nu}      \Bigg)  \Bigg]. 
S(t,k) := S(t,0;k,\eta_{CT}(t)) = \exp\left[- \frac{\nu}{\nu^2 + \omega_c^2} abs{k_{\perp}}^2 Q(t) \right] \exp\left[-\nu \int_0^t \abs{k_3}^2 \left(\frac{1 - e^{-\nu (t-s)}}{\nu}\right)^2 ds \right], 
\
\end{comment}
As in the collisionless case (Lemma \ref{lem:clessImz}), it suffices to obtain uniform (in $k$, $z$, and $\nu$) absolute integrability of $\abs{\partial_t K^{\nu,\delta}(t,k)}$. 
Consider $\partial_t K^{\nu,\delta}$ (note the first two terms come from $\partial_t S$), 
\begin{align*}
\abs{\partial_t K^{\nu,\delta}} & \lesssim \nu^{-1} k_3^2 \abs{ 1 - 2 e^{-\nu t} + e^{- 2\nu t}} \abs{K^{\nu,\delta}(t,k)} \\ 
& \quad +  \abs{1 - \frac{2\nu}{\nu^2 + \omega_c^2} e^{-\nu t} \left(\nu \cos \omega_{c}t + \omega_c \sin \omega_c t\right)  - \frac{2\omega_c}{\nu^2 + \omega_c^2}  e^{-\nu t} \left(-\nu \sin\omega_c t + \omega_c \cos \omega_c t\right) + e^{-2 \nu t}} \\ 
& \quad\quad \times  \nu \abs{k_{\perp}}^2 \abs{K^\nu(t,k)} \\ 
& \quad + \left(\abs{k_{\perp}}^2\abs{2\omega_c \sin \omega_c t e^{-\nu t} + 2\nu \cos \omega_c t e^{-\nu t} -2\nu e^{-2\nu t}  } + k_3^2 e^{-\nu t} \left(\frac{1-e^{-\nu t}}{\nu}\right)\right) \abs{K^\nu(t,k)} \\ 
& \quad + \frac{1}{\abs{k}^2} \left[\left[ \abs{k_{\perp}}^2 \nu e^{-\nu t} + e^{-\nu t} \right] +  k_3^2 e^{-\nu t}\right] \\ 
& \quad\quad \times S(t,k)\exp\left[-\pi \left( \frac{\abs{k_{\perp}}^2}{\nu^2 + \omega_c^2} \left( 1 - 2 \cos \omega_c t e^{-\nu t} + e^{-2\nu t}\right) + k_3^2 \left(\frac{1 - e^{-\nu t}}{\nu}\right)^2\right)\right] \\ 
& := \cK_1 + \cK_2 + \cK_3 + \cK_4. 
\end{align*}
By Lemma \ref{lem:propS}, for $0 < t < \nu^{-1/2}$ we have  
\begin{align}
\cK_1 & \lesssim \nu k_3^2 t^2 \exp\left(-\delta_0 \min(\nu k_3^2 t^3,\nu^{-1}k_3^2 t)\right) \left(\nu + \frac{1}{\abs{k}^2} + k_3^2 t\right) \exp(-k_3^2 t^2), 
\end{align}
and hence (using Lemma \ref{lem:propS} for $t > \nu^{-1/2}$ as above ), 
\begin{align}
\int_0^\infty \cK_1(t,k) dt \lesssim 1. 
\end{align}
Then, by Lemma \ref{lem:propS}, for $0 < t < \nu^{-1/2}$ we have 
\begin{align}
\cK_2(t,k) & = \nu \abs{k_{\perp}}^2 \abs{1 - 2 \cos \omega_c t e^{-\nu t} + e^{-2 \nu t}} \abs{K^\nu(t,k)} \\ 
& \lesssim \nu \abs{k_{\perp}}^2 \left( \mathbf{1}_{t < \omega_c/2} t^2 + \mathbf{1}_{t > \omega_c/2}\right) \abs{K^\nu(t,k)} \\ 
& \lesssim \exp\left[-\delta_0 \nu \abs{k_{\perp}}^2 \min(t^3,t) \right] \nu \abs{k_{\perp}}^2 \left( \mathbf{1}_{t < \omega_c/2} t^2 + \mathbf{1}_{t > \omega_c/2}\right) \\ 
& \quad \times \frac{1}{\abs{k}^2} \left(\left(  (\nu \abs{k_{\perp}}^2 t + 1)\mathbf{1}_{t < \omega_c/2} t  +  \mathbf{1}_{t > \omega_c/2}\right) + 1 +  k_3^2 t \right) e^{- k_3^2 t^2}  \\ 
& \lesssim \left( 1+ k_3^2 t\right) e^{- k_3^2 t^2}, 
\end{align}
Hence, (using Lemma \ref{lem:propS} as above) 
\begin{align}
\int_0^\infty \cK_2(t,k) dt & \lesssim \int_0^{\nu^{-1/2}} \cK_2(t,k) dt + O(\frac{\nu}{\abs{k}^2}) \lesssim 1. 
\end{align}
Turn next to $\cK_3$. 
Here, for $t < \nu^{-1/2}$, by Lemma \ref{lem:propS},  %\left(\abs{k_{\perp}}^2\abs{2\omega_c \sin \omega_c t e^{-\nu t} + 2\nu \cos \omega_c t e^{-\nu t} -2\nu e^{-2\nu t}  } + k_3^2 e^{-\nu t} \left(\frac{1-e^{-\nu t}}{\nu}\right) \abs{K^\nu(t,k)} \\ 
\begin{align*}
\cK_3 & \lesssim \abs{k_{\perp}}^2\left( \mathbf{1}_{t < \omega_c/2} t + \mathbf{1}_{t > \omega_c/2}\right) \frac{S(t,k)}{\abs{k}^2} \left(\abs{k_{\perp}}^2 \left( \mathbf{1}_{t < \omega_c/2}( \nu t^2 + t) + \mathbf{1}_{t > \omega_c/2}\right) + k_3^2 t\right) e^{-k_3^2 t^2} + k_3^2 t e^{-k_3^2 t^2} \\ 
& \lesssim (1 + k_3^2 t) e^{-k_3^2 t^2}.  
\end{align*}
Hence, using Lemma \ref{lem:propS} as above for $t > \nu^{-1/2}$, we have 
\begin{align} 
\int_0^\infty \cK_3 (t,k) dt & \lesssim 1. 
\end{align} 
The last term, $\cK_4$ is similar but easier and is hence omitted for brevity. 
\end{proof}

\begin{proof}[\textbf{Proof of Theorem \ref{thm:Coll} (ii)}]
Inequality \eqref{ineq:LDcol} will follow from Lemmas \ref{lem:KnuSobReg} and \ref{lem:phi0ctrl} provided we prove that there exists $\nu_0 > 0$ such that the following holds (where $\kappa$ is given in Lemma \ref{lem:MdEst}): 
\begin{align}
\inf_{\nu \in (0,\nu_0)} \inf_{k \in \Integers_\ast^3: k_3 \neq 0} \inf_{\Re z \geq 0} \abs{1-\cL[K^{\nu,\delta}](z,k)} \geq \kappa/2. \label{ineq:tildeKDisp}  
\end{align}	
Lemmas \ref{lem:ColCtrl1} and \ref{lem:ColCtrl2} show that there exists an $M > 0$ such that for all $\nu_0$ sufficiently small, 
\begin{align}
\inf_{\nu \in (0,\nu_0)} \inf_{k: k_3 \neq 0 \& \abs{k} \geq M} \inf_{z: \Re z \geq 0 \, \& \, \abs{\Im z} \geq M} \abs{1-\cL[K^{\nu,\delta}](z,k)} \geq \kappa/2. \label{ineq:tildeKDispCmpt}  
\end{align}	
Next, we will obtain \eqref{ineq:tildeKDisp} on $\abs{\Im z} \leq M$ and $\abs{k} \leq M$ by approximation from the collisionless case. 
Ultimately, we will apply the dominated  convergence theorem. 
By Lemma \ref{lem:propS} and calculations analogous to those in Lemma \ref{lem:ColCtrl1}, 
\begin{align}
\abs{K^{\nu,\delta}(t,k)} &\lesssim \brak{\frac{1-e^{-\nu t}}{\nu}} \exp\left(-\delta_0 \min(\nu k_3^2 t^3,\nu^{-1}k_3^2 t)\right) \\ & \quad\quad \times \exp\left[-\pi \left( \frac{\abs{k_{\perp}}^2}{\nu^2 + \omega_c^2} \left( 1 - 2 \cos \omega_c t e^{-\nu t} + e^{-2\nu t}\right) + k_3^2 \left(\frac{1 - e^{-\nu t}}{\nu}\right)^2\right)\right] \\ 
& \lesssim \frac{1}{\abs{k_3}} \exp\left(-\delta_0 \min(\nu k_3^2 t^3,\nu^{-1}k_3^2 t)\right) \exp\left[-k_3^2 \left(\frac{1 - e^{-\nu t}}{\nu}\right)^2 \right] \\ 
& \lesssim \mathbf{1}_{ \nu^{1/2}t < 1} \frac{1}{\abs{k_3}} \exp\left[-\frac{1}{2} k_3^2 t^2 \right] + \mathbf{1}_{\nu^{1/2} t > 1} \frac{1}{\abs{k_3}} \exp\left(-\delta_0 \min(\nu k_3^2 t^3,\nu^{-1}k_3^2 t)\right).  \label{ineq:Knbds}
\end{align}
Consider the following integral for $\lambda \geq 0$; we have for $\nu$ sufficiently small: 
\begin{align}
\cL[K^{\nu,\delta}](z,k) & = \int_0^\infty e^{-(\lambda + i\omega)t} K^{\nu,\delta}(t,k) dt  \\
& =  \int_0^{\nu^{-1/2}} e^{-(\lambda + i\omega)t} K^{\nu,\delta}(t,k) dt + O\left(\frac{1}{k_3^2}\exp[-k_3^2 \nu^{-1/12}]\right). 
\end{align}
By \eqref{ineq:Knbds}, we can then apply the dominated convergence theorem to the first term and deduce for all $z,k$ with $\Re z \geq 0$,
$\lim_{\nu \rightarrow 0} \cL[K^{\nu,\delta}](z,k) = \cL[K^0](z,k)$ pointwise. 
Moreover, using \eqref{ineq:Knbds}, it is clear that we have uniform (in $\nu$, $k$, and $z$) bounds on the derivatives
\begin{align}
\sup_{z \in \Complex: \Im z \leq M\, \& \,0 \leq \Re z \leq M} \abs{\partial_z^j \cL[K^{\nu,\delta}](z,k)} \lesssim_j 1. 
\end{align} 
Using the derivative estimates and the restrictions  $\abs{\Im z} \leq M$, $\abs{k} \leq M$, $0 \leq \Re z \leq M$ (it is clear that we can add this additional restriction by the definition of the Laplace transform) imply we can upgrade the pointwise convergence to uniform convergence. 
That is, for all $\eps > 0$, there exists $\nu_\eps$ such that 
\begin{align}
\sup_{\nu \in (0,\nu_\eps)}\sup_{k\in \Integers_\ast^3 : k_3 \neq 0 \& \abs{k} \leq M} \sup_{z \in \Complex: \Re z \geq 0 \, \& \, \abs{z} \leq M} \abs{\cL[K^\nu](z,k) - \cL[K^0](z,k)} \leq \eps. 
\end{align} 
The estimate \eqref{ineq:tildeKDisp} then follows from Lemma \ref{lem:MdEst}. As discussed above, this completes the proof of \eqref{ineq:LDcol}, and hence, Theorem \ref{thm:Coll}.
\end{proof}

\appendix
\section{Identities and estimates for the generalized Bessel functions} \label{sec:Bessel}
Recall the generalized Bessel functions
\begin{equation}
\label{}
I_\alpha(x)=i^{-\alpha}J_{\alpha}(ix)=\sum_{m=0}^\infty \frac{1}{m!\Gamma(m+\alpha+1)}\left(\frac{x}{2}\right)^{2m+\alpha}
\end{equation}
where $\alpha\in\mathbb R$ and $x\in\mathbb C$.
Theses  functions enjoy the following identities:
\begin{align}
&\exp{(z\cos\theta)}=I_0(z)+2\sum_1^\infty I_n(z)\cos n\theta
\label{Besid0}\\
&I_{n-1}(a)-I_{n+1}(a)=\frac{2n}{a}I_n(a)
\label{Besid1}
\end{align}
which is used crucially in the proof.  
From \eqref{Besid0}, we have
	\begin{equation}
	\label{e:a}
	\text{e}^a=\sum_{n=-\infty}^{\infty}I_n(a)=I_0(a)+2\sum_1^\infty I_n(a).
	\end{equation}
	Note that since each $I_{n}(a)$ is positive, it is straightforward to get
	\begin{equation}
	\label{}
	I_{n}(a) \le \text{e}^a
	\end{equation}
	for any $n\in\mathbb{N}$ and $a\in\mathbb{R}$.
By summing up \eqref{Besid1} in $n$, we get
\begin{equation}
\label{1st:mom}
I_{0}(a)+I_{1}(a)=\sum_{n=1}^\infty (I_{n-1}(a)-I_{n+1}(a))=\sum_{n=1}^\infty \frac{2n}{a}I_{n}(a).
\end{equation}
For $I_{0}(a)$ and $I_{1}(a)$, we have the following bounds.
\begin{lemma}
	\label{I0I1}
	The following inequalities hold
	\begin{align}
	\label{}
	&I_{0}(a)
	\lesssim
	\frac{1}{\sqrt{a}}\text{e}^{a};
	\label{bd:I0}
	\\
	&I_{1}(a)
	\lesssim
	\frac{1}{\sqrt{a}}\text{e}^{a}.
	\label{bd:I1}
	\end{align}
\end{lemma}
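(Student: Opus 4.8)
The plan is to pass from the power–series definition \eqref{fun:Bes} to the classical integral representation and then apply an elementary Laplace-type estimate. First I would use the identity \eqref{Besid0} together with the orthogonality of $\{1,\cos\theta,\cos 2\theta,\dots\}$ on $[0,\pi]$ (term-by-term integration is justified since the series in \eqref{Besid0} converges uniformly in $\theta$ for each fixed $a$) to recover
\begin{align}
I_0(a) = \frac{1}{\pi}\int_0^\pi e^{a\cos\theta}\, d\theta, \qquad I_1(a) = \frac{1}{\pi}\int_0^\pi e^{a\cos\theta}\cos\theta\, d\theta .
\end{align}
In particular $|I_1(a)|\le \frac{1}{\pi}\int_0^\pi e^{a\cos\theta}\,d\theta = I_0(a)$, so it suffices to prove \eqref{bd:I0}, after which \eqref{bd:I1} follows.

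Next I would insert the elementary bound $\cos\theta \le 1 - \tfrac{2}{\pi^2}\theta^2$ for $\theta\in[0,\pi]$ — which is just $1-\cos\theta = 2\sin^2(\theta/2)\ge \tfrac{2}{\pi^2}\theta^2$, a consequence of $\sin x \ge \tfrac{2}{\pi}x$ on $[0,\pi/2]$ — and extend the range of integration to $[0,\infty)$ to get a Gaussian integral:
\begin{align}
I_0(a) \le \frac{e^a}{\pi}\int_0^\pi e^{-2a\theta^2/\pi^2}\, d\theta \le \frac{e^a}{\pi}\int_0^\infty e^{-2a\theta^2/\pi^2}\, d\theta = \frac{e^a}{2}\sqrt{\frac{\pi}{2a}} .
\end{align}
This is \eqref{bd:I0} with an explicit constant, valid for every $a>0$ (for small $a$ the estimate is of course not sharp but still correct, since the right-hand side blows up), and combined with $|I_1(a)|\le I_0(a)$ it gives \eqref{bd:I1}.

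I do not expect a genuine obstacle here; the whole argument is routine. The only points needing a line of care are the justification of the term-by-term integration in deriving the integral representation and the elementary trigonometric inequality. An alternative, purely from the series \eqref{fun:Bes}, would be to write $1/(m!)^2 = \binom{2m}{m}/(2m)!$, bound $\binom{2m}{m}\lesssim 4^m/\sqrt{m}$ by Stirling, and compare the resulting series against $\cosh a = \sum_m a^{2m}/(2m)!$, using that the terms of $\cosh a$ are concentrated near $2m\approx a$ so that the extra factor $1/\sqrt{m}$ contributes $\sim a^{-1/2}$; but this is messier than the integral route, so I would present the argument above.
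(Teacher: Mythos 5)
Your proof is correct, and it takes a genuinely different route from the paper's. The paper stays with the power-series definition \eqref{fun:Bes}: it writes $I_0(a)=\sum_m \frac{1}{(m!)^2}(a/2)^{2m}\le e^{a/2}\sup_m \frac{1}{m!}(a/2)^m$, locates the supremum at $m_0=[a/2]$, and applies Stirling's formula to extract the factor $m_0^{-1/2}\sim a^{-1/2}$. You instead pass to the integral representation $I_0(a)=\frac{1}{\pi}\int_0^\pi e^{a\cos\theta}\,d\theta$, $I_1(a)=\frac{1}{\pi}\int_0^\pi e^{a\cos\theta}\cos\theta\,d\theta$ (which your orthogonality derivation from \eqref{Besid0} does justify, using the uniform convergence of the cosine series for fixed $a$), then bound $|I_1|\le I_0$, and estimate $I_0$ by a Gaussian integral via $1-\cos\theta\ge \tfrac{2}{\pi^2}\theta^2$ on $[0,\pi]$; the final computation $\frac{e^a}{\pi}\int_0^\infty e^{-2a\theta^2/\pi^2}\,d\theta=\frac{e^a}{2}\sqrt{\pi/(2a)}$ checks out. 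The integral/Laplace-method route gives an explicit constant in one step and unifies the two bounds by the trivial $|I_1|\le I_0$, at the modest cost of first deriving the integral formula; the paper's route stays entirely in the discrete world of the series and is a natural companion to the nearby bounds \eqref{bd:In1}--\eqref{bd:In2}, but needs Stirling and a separate run for $I_1$. Your sketched "alternative, purely from the series" argument via $1/(m!)^2=\binom{2m}{m}/(2m)!$ and $\binom{2m}{m}\lesssim 4^m/\sqrt{m}$ is essentially the paper's approach in disguise, so it is not really a third option. Either proof is acceptable.
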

\begin{proof}
	By the definition of $I_{n}(a)$, we obtain
	\begin{align}
	I_{0}(a)
	=
	\sum_{m=0}^\infty \frac{1}{m!^2}\left(\frac{a}{2}\right)^{2m}
	\le
	\text{e}^{a/2}\sup_{m\ge0} \frac{1}{m!}\left(\frac{a}{2}\right)^{m}.
	\end{align}
	Denote $m_0 = [a/2]$ and observe that the sequence $\frac{1}{m!}\left(\frac{a}{2}\right)^{m}$ is increasing for $m \leq m_0$ and decreasing for $m \geq m_0$ and hence, 
	\begin{align}
	\label{}
	\sup_m \frac{1}{m!}\left(\frac{a}{2}\right)^{m}
	&=
	\frac{1}{m_0!}\left(\frac{a}{2}\right)^{m_0}
	\lesssim
	\frac{1}{2\pi \sqrt{m_0}}\left(\frac{e}{m_0}\right)^{m_0}\left(\frac{a}{2}\right)^{m_0}
	\nonumber\\&
	\lesssim 
	\frac{1}{2\pi \sqrt{m_0}}\text{e}^{a/2}\left(\frac{a}{2m_0}\right)^{m_0}
	\lesssim
	\frac{1}{2\pi \sqrt{m_0}}\text{e}^{a/2}.
	\end{align}
	Therefore, we may bound $I_{0}(a)$ as
	\begin{equation}
	\label{}
	I_{0}(a)
	\lesssim
	\frac{1}{2\pi \sqrt{m_0}}\text{e}^{a}
	\end{equation}
	from where \eqref{bd:I0} follows.
	Similar argument gives
	\begin{equation}
	\label{}
	I_{1}(a)
	\lesssim
	\frac{1}{\sqrt{a}}\text{e}^{a}
	\end{equation}
	completing the proof of the lemma.
\end{proof}

\bibliographystyle{abbrv}
\bibliography{eulereqns}

\end{document}